\documentclass[11pt]{article}
\usepackage[noblocks]{authblk}

\usepackage[T1]{fontenc}
\usepackage{amsmath,amssymb,amsfonts,amsthm}
\usepackage{mathrsfs}
\usepackage{graphicx}
\usepackage{booktabs}
\usepackage{cite}
\usepackage[hidelinks]{hyperref}

\numberwithin{equation}{section}

\newtheorem{theorem}{Theorem}[section]
\newtheorem{lemma}{Lemma}[section]
\newtheorem{proposition}{Proposition}[section]
\newtheorem{corollary}{Corollary}[section]

\theoremstyle{remark}
\newtheorem{remark}{Remark}[section]

\title{A nonlocal nonlinear Schr{\"o}dinger model: well-posedness, local limit, and structure-preserving asymptotically compatible Fourier approximations}

\author[1]{Jiashu Lu%
	\thanks{Corresponding author. E-mail address:
		\texttt{jslu@xaut.edu.cn}.}}

\author[2]{Yufeng Nie}
\author[2]{Xinning Xie}
\author[3]{Pingrui Zhang}

\affil[1]{School of Mathematics,
	Xi'an University of Technology,
	Xi'an 710054, China}

\affil[2]{School of Mathematics and Statistics,
	Northwestern Polytechnical University,
	Xi'an 710129, China}

\affil[3]{Shaoxing Institute,
	Zhejiang University,
	Shaoxing 312099, China}

\date{}

\date{}

\begin{document}
	
	\maketitle
	
	\begin{abstract}
		This paper studies a finite-horizon nonlocal nonlinear Schr{\"o}dinger (NLS) model on periodic domains and develops a structure-preserving asymptotically compatible Fourier collocation (pseudo-spectral) method. We establish global well-posedness and conservation of mass and nonlocal energy for each fixed horizon, prove an $O(\delta^2)$ local limit under suitable regularity assumptions, and characterize finite-horizon dispersion at low and high frequencies. The method combines Crank--Nicolson time stepping with Fourier collocation and exactly preserves the grid mass and energy. The existence, uniqueness, and convergence of the numerical solutions are proved. In particular, the error of the nonlocal NLS solution is $O(\tau^2+N^{s-r})$  in the $H^s$ norm, uniformly with respect to the horizon. Moreover, its total $H^s$-error relative to the local NLS solution is $O(\delta^2+\tau^2+N^{s-r})$ without any coupling condition among $\delta$, $\tau$, and $N$, which proves asymptotic compatibility of the proposed method.  Numerical experiments in one, two, and three dimensions are presented to verify the theoretical accuracy and discrete conservation, confirm convergence under independent variation of horizon and discretization parameters, and show how the horizon and kernel affect dispersive wave propagation.
	\end{abstract}
	
	\medskip
	
	\noindent\textbf{2020 Mathematics Subject Classification.}
	35R09, 35Q55, 65T50, 81Q05.
	
	\smallskip
	
	\noindent\textbf{Keywords and phrases.}
	Nonlocal nonlinear Schr{\"o}dinger model, Fourier spectral approximations, Conservation law, Asymptotic compatibility.

\section{Introduction}

This paper studies a finite-horizon nonlocal nonlinear Schr{\"o}dinger (NLS) model, including its well-posedness, conservation laws, local limit, and dispersion properties, together with structure-preserving asymptotically compatible Fourier approximations. The model equation is
\begin{equation}
	\mathrm i\,\partial_tu_\delta=-\mathcal L_\delta u_\delta+f(|u_\delta|^2)u_\delta,
\end{equation}
where $\mathcal L_\delta$ is a nonlocal diffusion operator with finite horizon introduced in \cite{2012Analysis}. Such operators describe spatial interactions through integration rather than differentiation, which allows long range interactions and spatial discontinuities to be treated naturally and has led to applications in material damage modeling\cite{javili2019peridynamics,shen2021peridynamic,lipton2025energy}, traffic flow modeling\cite{huang2022stability,huang2024asymptotic}, neural network optimization\cite{tao2018nonlocal,you2022nonlocal}, biological modeling\cite{pal2025nonlocal}, and related areas. For the present model, $\mathcal L_\delta$ integrates over a spherical neighborhood of radius $\delta$, where $\delta>0$ is the horizon and determines the interaction range. Under the kernel assumptions specified in section \ref{sec2}, $\mathcal L_\delta$ can converge to the local Laplacian as $\delta\to0$, and the nonlocal NLS model can recover the classical NLS equation. 

An typical example of an NLS equation with nonlocal linear dispersion is the fractional NLS, which has been studies in both theory and applications. The fractional dispersive operator arises naturally from the L{\'e}vy path-integral formulation of fractional quantum mechanics, which leads to the fractional Schr{\"o}dinger equation \cite{laskin2000fractional}. In addition, Fractional NLS equations have also been established for nonlinear lattice systems with long-range interactions\cite{kirkpatrick2013continuum}. Moreover, the fractional Laplacian have an infinite interaction range and can be recovered from nonlocal diffusion operators with $\delta\to\infty$ \cite{d2013fractional}. Thus, the nonlocal NLS framework considered here may provide a bridge between classical and fractional NLS equations.

The finite-horizon nonlocal Schr{\"o}dinger equations have previously been studied numerically on the real line. Exact artificial nonreflecting boundary conditions were constructed in \cite{yan2020numerical}, and a second-order Crank--Nicolson approximation with DtN-type artificial boundary conditions was subsequently analyzed in \cite{wang2022stability}. These works concern one-dimensional linear equations on unbounded domains and focus primarily on reducing the resulting infinite systems to finite computational domains. Therefore, the multidimensional periodic nonlocal NLS considered here therefore requires a different analysis framework.

Modified Crank--Nicolson methods that conserve discrete mass and energy were developed for the local NLS equation in \cite{sanz1984methods,henning2017crank}, and related structure-preserving methods have been developed for fractional and coupled NLS equations \cite{wang2018structure,ding2024construction,zhang2025high}. On a periodic domain, the translation invariant nonlocal operator is diagonal in the Fourier basis, which makes a Fourier method natural for both analysis and computation \cite{2017Fast,alali2020fourier}. Moreover, since the nonlocal NLS model can converge the classical NLS model as $\delta\to0$, its numerical approximation should preserve the same limit, i.e., the numerical solution can converge to the local NLS solution when $\delta\to0$ and the temporal and spatial discretizations are refined independently, which is known as asymptotic compatibility and depends on the choice of discretization \cite{tian2014asymptotically,tian2020asymptotically}. In this paper, we develop a Crank--Nicolson Fourier collocation (pseudo-spectral) scheme with a symmetric difference quotient of the nonlinear potential and a self-adjoint Fourier collocation discretization of the nonlocal operator, and establish the structure-preserving asymptotically compatible framework of the proposed method.

The main results of this paper are summarized as follows.
\begin{enumerate}
	\item[(i)] We establish a continuous framework for the finite-horizon nonlocal NLS, including global well-posedness and conservation of mass and nonlocal energy for each fixed horizon, the $O(\delta^2)$ local limit under suitable regularity assumptions, and the low- and high-frequency dispersion properties.
	\item[(ii)] We construct an implementable Crank--Nicolson Fourier collocation method that exactly preserves the grid mass and the discrete counterpart of the original nonlocal energy.
	\item[(iii)] We prove the horizon-uniform $H^s$-error estimate $O(\tau^2+N^{s-r})$ for the nonlocal solution, where $\tau$ is the time step, $N$ is the Fourier cutoff, and the exact solution has the assumed $H^r$ regularity.
	\item[(iv)] Combining the fully discrete estimate with the continuous local limit gives the total error $O(\delta^2+\tau^2+N^{s-r})$ relative to the local NLS solution, without any coupling condition among $\delta$, $\tau$, and $N$, and hence proves asymptotic compatibility.
	\item[(v)] Numerical experiments in one, two, and three dimensions confirm the convergence rates, conservation laws, and asymptotic compatibility, while a two-dimensional nonlinear wave-packet interaction illustrates the finite-horizon dispersive effects.
\end{enumerate}

The remainder of the paper is organized as follows. Section \ref{sec2} defines the nonlocal NLS model and its Fourier representation. Section \ref{sec3} introduces the Crank--Nicolson, Fourier--Galerkin, and Fourier collocation discretizations. Section \ref{sec4} proves well-posedness, conservation laws, the nonlocal to local limit, and the dispersion properties of the model. Section \ref{sec5} analyzes the time discretization. Section \ref{sec6} establishes the Fourier approximation estimates, the collocation error bound, and asymptotic compatibility. Numerical results are reported in section \ref{sec7}, followed by conclusions in section \ref{sec8}.
\section{The nonlocal NLS model}\label{sec2}
This section introduces the nonlocal NLS model considered in this paper. We first define the periodic nonlocal diffusion operator and its Fourier multiplier, and then present the nonlinear evolution equation together with its local limit and conservative structure. The assumptions stated in this section remain in force throughout the paper.
\subsection{Nonlocal operator and Fourier representation}
First, we let $\mathbb T^d=(\mathbb R/2\pi\mathbb Z)^d$ with $d\in\{1,2,3\}$, and we use $[-\pi,\pi)^d$ as a representative fundamental cell of $\mathbb T^d$. Every function $v:\mathbb T^d\to\mathbb C$ is identified with its $2\pi$-periodic extension to $\mathbb R^d$, still denoted by $v$, satisfying
\begin{equation}
	v(x+2\pi m)=v(x),\quad x\in\mathbb R^d,\quad m\in\mathbb Z^d.
\end{equation}

For complex-valued functions $u$ and $v$, we use the normalized inner product
\begin{equation}
	(u,v)=\frac{1}{(2\pi)^d}
	\int_{\mathbb T^d}u(x)\overline{v(x)}\,\mathrm dx,
\end{equation}
with the induced norm $\|v\|_{L^2}^2=(v,v)$. Moreover, the Fourier basis is denoted by
\begin{equation}
	e_k(x)=\exp(\mathrm i k\cdot x),\quad k\in\mathbb Z^d.
\end{equation}
Above $\{e_k\}_{k\in\mathbb Z^d}$ forms an orthonormal basis of $L^2(\mathbb T^d)$. Hence, every $v\in L^2(\mathbb T^d)$ has the Fourier representation:
\begin{equation}
	v(x)=\sum_{k\in\mathbb Z^d}\widehat v_k e_k(x),\quad\widehat v_k=(v,e_k),
\end{equation}
where the series converges to $v$ in $L^2(\mathbb T^d)$.  Moreover, the periodic Sobolev space $H^q(\mathbb T^d)$ is characterized by the norm
\begin{equation}\label{Hnorm}
	\|v\|_{H^q}^2=\sum_{k\in\mathbb Z^d}(1+|k|^2)^q|\widehat v_k|^2.
\end{equation}

Let $B_\delta=\{z\in\mathbb R^d:|z|<\delta\}$, where the horizon parameter satisfies $0<\delta\le \overline\delta<\pi$. In this paper, we consider kernels of the form
\begin{equation}
	\rho_\delta(z)=\delta^{-d-2}\rho\!\left(\frac{z}{\delta}\right).
\end{equation}
where the reference kernel $\rho$ satisfies
\begin{equation}\label{K1}
	\rho\in L^1(B_1),
	\quad
	\rho(z)=\rho(-z)\ge0,
	\quad
	\operatorname{supp}\rho\subset \overline{B_1},
\end{equation}
and the second-moment normalization
\begin{equation}\label{K2}
	\frac12\int_{B_1}\rho(z)z_i z_j\,\mathrm dz=\delta_{ij},
	\quad 1\le i,j\le d.
\end{equation}
Following the nonlocal diffusion framework developed in \cite{2012Analysis,2013A}, we define the nonlocal diffusion operator $\mathcal L_\delta$ acting on a periodic function $v$ by
\begin{equation}\label{2.1}
	\mathcal L_\delta v(x)=\int_{B_\delta}\rho_\delta(z)\bigl(v(x+z)-v(x)\bigr)\,\mathrm dz,
\end{equation}
where $x+z$ is understood modulo $2\pi$ in each coordinate. Since $\rho_\delta\in L^1(B_\delta)$, the operator $\mathcal L_\delta$ is bounded on $L^2(\mathbb T^d)$, and more generally on every periodic Sobolev space $H^q(\mathbb T^d)$, for each fixed $\delta>0$.

Moreover, the operator \(-\mathcal L_\delta\) induces the Hermitian sesquilinear form
\begin{equation}\label{2.2}
	\begin{aligned}
		a_\delta(u,v)=\frac{1}{2(2\pi)^d}\int_{\mathbb T^d}\int_{B_\delta}
		&\rho_\delta(z)\bigl(u(x+z)-u(x)\bigr)\\
		&\times\overline{\bigl(v(x+z)-v(x)\bigr)}\,\mathrm dz\,\mathrm dx.
	\end{aligned}
\end{equation}
The symmetry of $\rho_\delta$, together with periodic changes of variables, gives the nonlocal integration-by-parts identity\cite{2012Analysis,2013A}
\begin{equation}\label{integralbyparts}
	(-\mathcal L_\delta u,v)=a_\delta(u,v).
\end{equation}
Moreover, since $\rho_\delta\ge0$, we have
\begin{equation}
	a_\delta(u,u)\ge0.
\end{equation}
The above relations show that \(-\mathcal L_\delta\) is self-adjoint and nonnegative. 

Moreover, since  $\mathcal L_\delta$ is translation invariant, each Fourier mode $e_k$ is an eigenfunction of $-\mathcal L_\delta$. A direct calculation leads to
\begin{equation}\label{2.3}
	-\mathcal L_\delta e_k=\lambda_\delta(k)e_k,
	\quad k\in\mathbb Z^d,
\end{equation}
where the corresponding eigenvalue is
\begin{equation}\label{2.4}
	\lambda_\delta(k)=\int_{B_\delta}\rho_\delta(z)\bigl(1-\cos(k\cdot z)\bigr)\,\mathrm dz.
\end{equation}
In particular, we have $\lambda_\delta(0)=0$ and $\lambda_\delta(k)=\lambda_\delta(-k)\ge0$. \eqref{2.3} shows that $-\mathcal L_\delta$ is a Fourier multiplier with the nonnegative symbol $\lambda_\delta(k)$. Consequently,
\begin{equation}
	\widehat{(-\mathcal L_\delta v)}_k
	=\lambda_\delta(k)\widehat v_k.
\end{equation}
Parseval's identity further gives
\begin{equation}\label{avvpar}
	a_\delta(u,v)=\sum_{k\in\mathbb Z^d}\lambda_\delta(k)\widehat u_k\overline{\widehat v_k},
	\quad
	a_\delta(v,v)=\sum_{k\in\mathbb Z^d}\lambda_\delta(k)|\widehat v_k|^2.
\end{equation}

\subsection{Nonlocal NLS model}
Let $f:[0,\infty)\to\mathbb R$ be a real-valued function satisfying 
\begin{equation}\label{N1}
	f\in C_{\mathrm{loc}}^{r+3}([0,\infty)),
\end{equation}
with $r\in\mathbb N_0$. We use the normalized primitive $F(\eta)=\int_0^\eta f(\xi)\,\mathrm d\xi$, so that $F(0)=0$. A typical example is
\begin{equation}
	f(\eta)=\beta\eta^p,\quad F(\eta)=\frac{\beta}{p+1}\eta^{p+1}.
	\quad\beta\in\mathbb R,\quad
	p\in\mathbb N,
\end{equation}

Then, we consider the periodic finite-horizon nonlocal NLS equation
\begin{equation}\label{2.5}
	\begin{cases}
		\mathrm i\,\partial_tu_\delta=-\mathcal L_\delta u_\delta+f(|u_\delta|^2)u_\delta,&x\in\mathbb T^d,\quad 0<t\le T,\\
		u_\delta(x,0)=u_{\delta,0}(x),&x\in\mathbb T^d,
	\end{cases}
\end{equation}
where $u_{\delta,0}$ is a prescribed complex-valued periodic initial function. Its precise regularity will be specified in the well-posedness and error estimates.

The corresponding local NLS equation is
\begin{equation}\label{2.6}
	\begin{cases}
		\mathrm i\,\partial_tu=-\Delta u+f(|u|^2)u,&x\in\mathbb T^d,\quad 0<t\le T,\\
		u(x,0)=u_0(x),&x\in\mathbb T^d.
	\end{cases}
\end{equation}
The local initial function $u_0$ is also periodic. We assume that \eqref{2.6} admits a sufficiently regular solution on the time interval $[0,T]$ considered below. And the convergence of the solution of \eqref{2.5} to that of \eqref{2.6} as $\delta\to0$ will be established in section \ref{sec4.3}. Moreover, replacing the Laplacian by the nonlocal diffusion operator changes the linear frequency associated with the Fourier mode $e_k$ from $|k|^2$ to $\lambda_\delta(k)$. The resulting dispersion relation, group velocity, and high-frequency behavior are studied in section \ref{sec4.4}.

To describe the conservative structure of the nonlocal NLS model, we introduce the mass and energy functionals associated with \eqref{2.5}. For a periodic function $v$, its mass is defined by
\begin{equation}\label{2.7}
	M(v)=\|v\|_{L^2}^2.
\end{equation}
And for $v\in H^s(\mathbb T^d)$, the corresponding nonlocal energy is
\begin{equation}\label{2.8}
	E_\delta(v)=\frac12a_\delta(v,v)+\frac{1}{2(2\pi)^d}\int_{\mathbb T^d}F(|v(x)|^2)\,\mathrm dx,
\end{equation}
while the corresponding local energy is
\begin{equation}\label{2.9}
	E_0(v)=\frac12\|\nabla v\|_{L^2}^2+\frac{1}{2(2\pi)^d}\int_{\mathbb T^d}F(|v(x)|^2)\,\mathrm dx.
\end{equation}
The conservation of $M$ and $E_\delta$ with the solutions of \eqref{2.5} will be proved in section \ref{sec4.2}.

\section{Crank--Nicolson Fourier discretizations}\label{sec3}
This section introduces the temporal and spatial discretizations of \eqref{2.5}. We first present a Crank--Nicolson time discretization based on a symmetric difference quotient of the nonlinear potential. We then introduce the Fourier collocation scheme used in computation and an auxiliary Fourier--Galerkin approximation used in its convergence analysis. The two formulations are stated separately because the Galerkin energy is based on the continuous inner product, whereas the collocation energy is based on the grid inner product.
\subsection{Crank--Nicolson time discretization}
Let $N_T$ be a positive integer, set $\tau=T/N_T$, and define $t_n=n\tau$ for $0\le n\le N_T$. Then for a sequence $\{U^n\}_{n=0}^{N_T}$, we denote
\begin{equation}\label{3.1}
	D_\tau U^n=\frac{U^{n+1}-U^n}{\tau},\quad U^{n+\frac12}=\frac{U^{n+1}+U^n}{2}.
\end{equation}
And for $a,b\ge0$, define the symmetric difference quotient
\begin{equation}\label{3.2}
	G(a,b)=
	\begin{cases}
		\displaystyle\frac{F(a)-F(b)}{a-b},&a\ne b,\\
		f(a),&a=b.
	\end{cases}
\end{equation}
Then we have
\begin{equation}\label{3.3}
	G(a,b)=\int_0^1 f\bigl((1-\theta)b+\theta a\bigr)\,\mathrm d\theta.
\end{equation}
For complex-valued functions $v$ and $w$, we further set
\begin{equation}\label{3.5}
	\mathscr G(v,w)=G(|v|^2,|w|^2)\frac{v+w}{2}.
\end{equation}
Then the time-discrete approximation of \eqref{2.5} is: find $U_\delta^{n+1}$ such that
\begin{equation}\label{3.6}
	\mathrm iD_\tau U_\delta^n=-\mathcal L_\delta U_\delta^{n+\frac12}+\mathscr G(U_\delta^{n+1},U_\delta^n),\quad 0\le n\le N_T-1,
\end{equation}
with $U_\delta^0=u_{\delta,0}$. The quotient \eqref{3.2} is a modified Crank--Nicolson treatment of a general NLS nonlinearity. The solvability and conservation properties of \eqref{3.6} will be established in section \ref{sec5}.
\subsection{Fourier--Galerkin discretization}
The Fourier--Galerkin discretization is introduced as an auxiliary problem for the subsequent error analysis, rather than as the scheme used in computation. It separates the Fourier projection error from the interpolation and aliasing errors arising in the collocation scheme.

For $N\in\mathbb N$, we define
\begin{equation}\label{3.7}
	K_N=\bigl\{k=(k_1,\ldots,k_d)\in\mathbb Z^d:|k_\ell|\le N,\ 1\le \ell\le d\bigr\}
\end{equation}
and the trigonometric polynomial space
\begin{equation}\label{3.8}
	X_N=\operatorname{span}\{e_k:k\in K_N\}.
\end{equation}
For \(v_N\in X_N\), the operator \(-\mathcal L_\delta\) acts diagonally on its Fourier expansion:
\begin{equation}\label{3.10}
	\widehat{(-\mathcal L_\delta v_N)}_k
	=\lambda_\delta(k)\widehat v_{N,k},
	\qquad k\in K_N,
	\quad v_N\in X_N.
\end{equation}
Let $P_N:L^2(\mathbb T^d)\to X_N$ denote the $L^2$-orthogonal projection,
\begin{equation}\label{3.9}
	P_Nv=\sum_{k\in K_N}\widehat v_k e_k.
\end{equation}
Then the fully discrete Fourier--Galerkin scheme is: find $U_{\delta,N}^{n+1}\in X_N$ such that
\begin{equation}\label{3.12}
	\mathrm i(D_\tau U_{\delta,N}^n,v_N)=a_\delta(U_{\delta,N}^{n+\frac12},v_N)+(\mathscr G(U_{\delta,N}^{n+1},U_{\delta,N}^n),v_N),\quad\forall v_N\in X_N,
\end{equation}
with $U_{\delta,N}^0=P_Nu_{\delta,0}$. Equivalently,
\begin{equation}\label{3.13}
	\mathrm iD_\tau U_{\delta,N}^n=-\mathcal L_\delta U_{\delta,N}^{n+\frac12}+P_N\mathscr G(U_{\delta,N}^{n+1},U_{\delta,N}^n).
\end{equation}

All inner products in \eqref{3.12} are exact $L^2$ inner products. Since $-\mathcal L_\delta$ maps $X_N$ into itself, no projection is required for the linear term. The nonlinear term, however, does not generally belong to $X_N$ and is therefore replaced by its $L^2$-orthogonal projection. Consequently, \eqref{3.12} is equivalent to \eqref{3.13}. 

The Fourier collocation scheme introduced below replaces the exact projection $P_N$ of the nonlinear term by trigonometric interpolation based on its values at the uniform grid points.

\subsection{Fourier collocation and implementation}\label{sec3.3}
First, let $M=2N+1$, $h=2\pi/M$, and introduce the index set
\begin{equation}\label{3.14}
	\mathbb J_M=\{0,1,\ldots,M-1\}^d
\end{equation}
and the uniform grid
\begin{equation}\label{3.15}
	x_j=-\pi\mathbf 1+h j,\quad j\in\mathbb J_M,
\end{equation}
where $\mathbf 1=(1,\ldots,1)\in\mathbb R^d$. For grid functions $V=(V_j)_{j\in\mathbb J_M}$ and $W=(W_j)_{j\in\mathbb J_M}$, define the normalized trapezoidal inner product and norm by
\begin{equation}\label{3.16}
	\langle V,W\rangle_h=\frac1{M^d}\sum_{j\in\mathbb J_M}V_j\overline{W_j},\quad\|V\|_h^2=\langle V,V\rangle_h.
\end{equation}
For $k\in K_N$, let $e_k^h=(e_k(x_j))_{j\in\mathbb J_M}$ denote the restriction of the Fourier mode $e_k$ to the grid. These grid modes are orthonormal with respect to $\langle\cdot,\cdot\rangle_h$. Hence, the discrete Fourier transform of $V$ is the coefficient vector $\widehat V=(\widehat V_k)_{k\in K_N}$, whose components are
\begin{equation}\label{3.17}
	\widehat V_k=\langle V,e_k^h\rangle_h=\frac1{M^d}\sum_{j\in\mathbb J_M}V_j e^{-\mathrm i k\cdot x_j},
	\quad k\in K_N,
\end{equation}
and the inverse transform is
\begin{equation}\label{3.18}
	V_j=\sum_{k\in K_N}\widehat V_k e_k(x_j)=\sum_{k\in K_N}\widehat V_k e^{\mathrm i k\cdot x_j},\quad j\in\mathbb J_M.
\end{equation}

Moreover, the discrete Parseval identity reads
\begin{equation}\label{3.19}
	\langle V,W\rangle_h=\sum_{k\in K_N}\widehat V_k\overline{\widehat W_k},
\end{equation}
and define the Fourier collocation operator $\mathcal L_{\delta,N}^{c}$ by
\begin{equation}\label{3.20}
	(\mathcal L_{\delta,N}^{c}V)_j=-\sum_{k\in K_N}\lambda_\delta(k)\widehat V_k e^{\mathrm i k\cdot x_j}.
\end{equation}
Since $\lambda_\delta(k)$ is real and nonnegative, \eqref{3.19} - \eqref{3.20} give
\begin{equation}
	\langle-\mathcal L_{\delta,N}^{c}V,W\rangle_h=\langle V,-\mathcal L_{\delta,N}^{c}W\rangle_h,\quad\langle-\mathcal L_{\delta,N}^{c}V,V\rangle_h\ge0.
\end{equation}
Thus, $-\mathcal L_{\delta,N}^{c}$ is self-adjoint and nonnegative with respect to the trapezoidal inner product. Replacing the spatial operator in the time-discrete scheme \eqref{3.6} by $-\mathcal L_{\delta,N}^{c}$ and evaluating the nonlinear term pointwise at the grid points yields the Fourier collocation scheme: given $U_j^0=u_{\delta,0}(x_j), j\in\mathbb J_M$, find $U^{n+1}=(U_j^{n+1})_{j\in\mathbb J_M}$ such that
\begin{equation}\label{3.22}
	\mathrm iD_\tau U_j^n=-(\mathcal L_{\delta,N}^{c}U^{n+\frac12})_j+G(|U_j^{n+1}|^2,|U_j^n|^2)U_j^{n+\frac12},\quad j\in\mathbb J_M.
\end{equation}
The associated grid mass and grid energy are
\begin{equation}\label{3.23}
	M_h(V)=\|V\|_h^2
\end{equation}
and
\begin{equation}\label{3.24}
	\begin{aligned}
		E_{\delta,h}(V)=&\frac12\langle-\mathcal L_{\delta,N}^{c}V,V\rangle_h+\frac1{2M^d}\sum_{j\in\mathbb J_M}F(|V_j|^2)\\
		=&\frac12\sum_{k\in K_N}\lambda_\delta(k)|\widehat V_k|^2+\frac1{2M^d}\sum_{j\in\mathbb J_M}F(|V_j|^2).
	\end{aligned}
\end{equation}
The pointwise discrete gradient in \eqref{3.22} and the self-adjoint multiplier \eqref{3.20} yield exact conservation of $M_h$ and $E_{\delta,h}$ when the nonlinear system is solved exactly. This property will be proved later. 

Moreover, one time step of \eqref{3.22} can be computed by fixed-point iteration. Given $U^n$, set $W^{(0)}=U^n$ and solve
\begin{equation}\label{3.25}
	\left(\frac{\mathrm i}{\tau}I+\frac12\mathcal L_{\delta,N}^{c}\right)W^{(m+1)}=\left(\frac{\mathrm i}{\tau}I-\frac12\mathcal L_{\delta,N}^{c}\right)U^n+\mathscr G_h(W^{(m)},U^n),
\end{equation}
where
\begin{equation}\label{3.26}
	[\mathscr G_h(V,W)]_j=G(|V_j|^2,|W_j|^2)\frac{V_j+W_j}{2}.
\end{equation}
Denote its $k$-th discrete Fourier coefficient by
\begin{equation}
	\widehat{\mathscr G}_{h,k}(V,W):=\langle\mathscr G_h(V,W),e_k^h\rangle_h,\quad k\in K_N.
\end{equation}
Then in Fourier space, \eqref{3.25} becomes
\begin{equation}\label{3.27}
	\begin{aligned}
		\left(\frac{\mathrm i}{\tau}-\frac12\lambda_\delta(k)\right)\widehat{W^{(m+1)}}_k=&\left(\frac{\mathrm i}{\tau}+\frac12\lambda_\delta(k)\right)\widehat{U^n}_k\\
		+&\widehat{\mathscr G}_{h,k}(W^{(m)},U^n),
		\qquad k\in K_N.
	\end{aligned}
\end{equation}
The diagonal factors on the left-hand side of \eqref{3.27} are nonzero. Each iteration therefore requires only pointwise operations and a fixed number of FFTs, with computational cost $O(M^d\log M)$ and storage $O(M^d)$.
\begin{remark}
	Scheme \eqref{3.22} differs from the Galerkin scheme \eqref{3.12} in the treatment of the nonlinear term. The former evaluates the nonlinearity pointwise on the grid and preserves the grid energy \(E_{\delta,h}\), and the latter uses the exact \(L^2\)-projection and preserves \(E_\delta\) restricted to \(X_N\). For a general nonpolynomial \(f\), the two schemes are not algebraically identical. In section \ref{sec6}, we first estimate the Galerkin approximation and then controls the interpolation and aliasing errors of the collocation scheme.
\end{remark}
\begin{remark}
	The multipliers $\lambda_\delta(k)$ are independent of time and are therefore precomputed. For small $\delta|k|$, the moment expansion may be used to avoid cancellation in \eqref{2.4}; otherwise, accurate quadrature or the hybrid algorithm in \cite{2017Fast} can be applied. The computed multipliers are real and nonnegative, preserving the self-adjointness and nonnegativity of the discrete operator.
\end{remark}
\section{Analytical properties of the nonlocal NLS model}\label{sec4}
In this section, we establish the analytical properties of the nonlocal NLS model. After establishing the required operator and nonlinear estimates, we prove well-posedness and conservation laws, derive quantitative nonlocal-to-local convergence, and prove the uniform time regularity required by the temporal error analysis. Finally, we analyze the plane-wave dispersion relation and group velocity. Throughout this section, the constants are independent of $0<\delta\le\overline\delta$, unless a dependence on $\delta$ is indicated explicitly.
\subsection{Operator and nonlinear estimates}
Let \(r\in\mathbb N_0\) be the regularity index appearing in \eqref{N1}. For $q\in\mathbb R$, set $\Lambda^q=(I-\Delta)^{q/2}$ and $(v,w)_{H^q}=(\Lambda^qv,\Lambda^qw)$.

\begin{lemma}[Sobolev--Moser estimates]\label{lemmaA}
	Let $q\in\mathbb N_0$ satisfy $q>d/2$, and let $\Phi\in C_{\mathrm{loc}}^{q+3}(\mathbb R^m;\mathbb R^\ell)$. Then the pointwise composition $v\mapsto\Phi\circ v$ defines a twice continuously Fréchet differentiable mapping from $H^q(\mathbb T^d;\mathbb R^m)$ to $H^q(\mathbb T^d;\mathbb R^\ell)$. For every $R>0$, there exists $C_{q,R}>0$, depending on $\Phi$, $q$, and $R$, such that
	\begin{equation}\label{A1}
		\|\Phi\circ v-\Phi\circ w\|_{H^q}
		\le C_{q,R}\|v-w\|_{H^q},
		\quad
		\|v\|_{H^q}+\|w\|_{H^q}\le R,
	\end{equation}
	and, for $j=1,2$,
	\begin{equation}\label{A2}
		\|D^j\Phi(v)[w_1,\ldots,w_j]\|_{H^q}
		\le C_{q,R}\prod_{\nu=1}^j\|w_\nu\|_{H^q},
		\quad
		\|v\|_{H^q}\le R.
	\end{equation}
	Moreover, if $\Phi(0)=0$, then
	\begin{equation}\label{A3}
		\|\Phi\circ v\|_{H^q}
		\le C_q(\|v\|_{L^\infty})\|v\|_{H^q},
	\end{equation}
	where $C_q:[0,\infty)\to(0,\infty)$ depends on $\Phi$ and can be chosen nondecreasing.
\end{lemma}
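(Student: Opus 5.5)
The plan is the classical Moser argument, built on two facts about $H^q(\mathbb T^d)$ when $q\in\mathbb N_0$ and $q>d/2$. First, the Sobolev embedding $H^q\hookrightarrow L^\infty$ holds, with $\|v\|_{L^\infty}\le C\|v\|_{H^q}$. This follows from \eqref{Hnorm} and Cauchy--Schwarz, since $\sum_k(1+|k|^2)^{-q}<\infty$. Second, $H^q$ is a Banach algebra. For integer $q$ we can use the equivalent norm $\sum_{|\alpha|\le q}\|\partial^\alpha v\|_{L^2}$. With it, the algebra property follows from Leibniz' rule and the Gagliardo--Nirenberg interpolation inequality
\[
\|\partial^{\beta}v\|_{L^{2q/|\beta|}}\le C\|v\|_{L^\infty}^{1-|\beta|/q}\|v\|_{H^q}^{|\beta|/q}
\]
on the torus, combined with Hölder's inequality. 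This gives the tame estimate $\|vw\|_{H^q}\le C(\|v\|_{L^\infty}\|w\|_{H^q}+\|v\|_{H^q}\|w\|_{L^\infty})$.

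\textbf{Step 1 (estimate \eqref{A3}).} For $|\alpha|\le q$, expand $\partial^\alpha(\Phi\circ v)$ by the Faà di Bruno formula. It is a sum of terms $(D^j\Phi)(v)[\partial^{\beta_1}v,\ldots,\partial^{\beta_j}v]$ with $\beta_1+\dots+\beta_j=\alpha$ and $j\le|\alpha|$. The factor $|D^j\Phi(v)|$ is bounded by $\sup_{|y|\le\|v\|_{L^\infty}}|D^j\Phi(y)|$, which is finite and nondecreasing in $\|v\|_{L^\infty}$ because $\Phi\in C^{q+3}_{\mathrm{loc}}$. The product of derivatives is handled by Hölder's inequality with exponents $2|\alpha|/|\beta_\nu|$ and then Gagliardo--Nirenberg. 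This yields the bound $C\|v\|_{L^\infty}^{j-1}\|v\|_{H^q}$. For $\alpha=0$ we use $\Phi(0)=0$ and the mean value theorem to get $|\Phi(v)|\le\sup|D\Phi|\,|v|$. Collecting the terms gives \eqref{A3} with a nondecreasing $C_q$.

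\textbf{Step 2 (estimates \eqref{A1} and \eqref{A2}).} For \eqref{A1}, write
\[
\Phi\circ v-\Phi\circ w=\int_0^1 D\Phi(w+\theta(v-w))\,\mathrm d\theta\,(v-w).
\]
The map $D\Phi$ lies in $C^{q+2}_{\mathrm{loc}}$. Applying Step 1 to $D\Phi(\cdot)-D\Phi(0)$ gives $\|D\Phi(w+\theta(v-w))\|_{H^q}\le C(R)$, because the $L^\infty$ norms are controlled by $R$ through the embedding. The algebra property then gives \eqref{A1}. Estimate \eqref{A2} follows in the same way: $D^j\Phi(v)[w_1,\ldots,w_j]$ is a sum of products of the components of $D^j\Phi(v)$ with the components $w_{\nu}$. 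The first factor lies in $H^q$ with norm $\le C(R)$, by Step 1 applied to $D^j\Phi-D^j\Phi(0)$; this needs $D^j\Phi\in C^{q+1}_{\mathrm{loc}}$, which holds for $j\le2$. The algebra property finishes the estimate.

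\textbf{Step 3 ($C^2$ Fréchet differentiability).} We verify the Taylor remainder
\[
\Phi\circ(v+h)-\Phi\circ v-D\Phi(v)h-\tfrac12D^2\Phi(v)[h,h]=\int_0^1\tfrac{(1-\theta)^2}{2}\,D^3\Phi(v+\theta h)[h,h,h]\,\mathrm d\theta.
\]
By the argument of Step 2 applied to $D^3\Phi\in C^q_{\mathrm{loc}}$, this remainder is $O(\|h\|_{H^q}^3)$. So $\Phi\circ\cdot$ is twice differentiable with derivatives given by pointwise composition. Continuity of $v\mapsto D^2\Phi(v)$ in operator norm follows from \eqref{A1} applied to $D^2\Phi$, which is in $C^{q+1}_{\mathrm{loc}}$, followed by the algebra property. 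This accounts for the $q+3$ derivatives assumed on $\Phi$.

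\textbf{Main obstacle.} The delicate step is the top-order product estimate in Step 1, namely the Gagliardo--Nirenberg inequality on $\mathbb T^d$ with the $L^\infty$ endpoint. We will handle it by citing the standard periodic version, reducing to $\mathbb R^d$ via a cutoff of the periodic extension. Alternatively, we can bound $\|D^j\Phi(v)\|_{L^\infty}$ and treat the derivative products crudely with $H^q$-algebra bounds. That crude route only gives the weaker form $C(\|v\|_{H^q})\|v\|_{H^q}$, which is enough for \eqref{A1} and \eqref{A2}. The sharper dependence on $\|v\|_{L^\infty}$ in \eqref{A3} genuinely needs the interpolation inequality.
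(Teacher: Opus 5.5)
Your proposal is correct and is essentially the paper's argument. The paper only records $H^q\hookrightarrow L^\infty$ and the algebra property and then cites Taylor's Moser product and composition estimates; your Fa\`a di Bruno/Gagliardo--Nirenberg proof of \eqref{A3}, followed by the integral mean-value and Taylor formulas plus the algebra property, is the standard proof behind that citation, and it also tracks how many derivatives of $\Phi$ each estimate uses.

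One step should be stated more carefully. In Step 3, passing from the locally uniform $O(\|h\|_{H^q}^3)$ Taylor remainder to twice Fr\'echet differentiability is a converse-Taylor argument that you use but do not spell out. The simplest fix is to apply your first-order remainder estimate directly to $v\mapsto D\Phi(v)$: its remainder $\int_0^1(1-\theta)D^3\Phi(v+\theta h)[h,h,\cdot]\,\mathrm d\theta$ is bounded in operator norm by $C_R\|h\|_{H^q}^2$ through the algebra property.
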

\begin{proof}
	Since $q>d/2$, $H^q(\mathbb T^d)$ is embedded in $L^\infty(\mathbb T^d)$ and is a Banach algebra. Then the differentiability and estimates \eqref{A1} - \eqref{A3} follow from the Sobolev--Moser product and composition estimates \cite[Chapter 13, Propositions 3.7 and 3.9]{taylor2013partial}. 
\end{proof}

\begin{lemma}\label{lemma4.1}
	Assume \eqref{K1} and \eqref{K2} hold. Then, for every $k\in\mathbb Z^d$, we have
	\begin{equation}\label{4.1}
		0\le \lambda_\delta(k)\le\min\left\{|k|^2,2\delta^{-2}\|\rho\|_{L^1(B_1)}\right\}.
	\end{equation}
	Consequently, for every $q\in\mathbb R$,
	\begin{equation}\label{4.2}
		\|\mathcal L_\delta v\|_{H^q}\le \|v\|_{H^{q+2}},\quad v\in H^{q+2}(\mathbb T^d),
	\end{equation}
	uniformly for $0<\delta\le\overline\delta$. And for each fixed $\delta>0$,
	\begin{equation}\label{4.3}
		\|\mathcal L_\delta v\|_{H^q}\le2\delta^{-2}\|\rho\|_{L^1(B_1)}\|v\|_{H^q},
		\quad v\in H^q(\mathbb T^d).
	\end{equation}
	Moreover, for $v,w\in H^q(\mathbb T^d)$,
	\begin{equation}\label{4.4}
		\begin{aligned}
			\Lambda^q\mathcal L_\delta v&=\mathcal L_\delta\Lambda^q v,\\
			(\mathcal L_\delta v,w)_{H^q}&=(v,\mathcal L_\delta w)_{H^q}.
		\end{aligned}
	\end{equation}
\end{lemma}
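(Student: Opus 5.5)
The plan is to work entirely with the Fourier symbol \eqref{2.4} and then transfer the bounds to operators through the Sobolev norm \eqref{Hnorm}.

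\textbf{Nonnegativity and the bound $|k|^2$.} Nonnegativity is immediate, since $\rho_\delta\ge0$ by \eqref{K1} and $1-\cos(k\cdot z)\ge0$. For the upper bound $|k|^2$ I would use the elementary inequality $1-\cos\theta\le\theta^2/2$. This gives
\begin{equation*}
	\lambda_\delta(k)\le\frac12\int_{B_\delta}\rho_\delta(z)(k\cdot z)^2\,\mathrm dz
	=\frac12\sum_{i,j}k_ik_j\int_{B_\delta}\rho_\delta(z)z_iz_j\,\mathrm dz .
\end{equation*}
Next I substitute $z=\delta y$. Then $\mathrm dz=\delta^d\mathrm dy$ and $z_iz_j=\delta^2y_iy_j$, and together with $\rho_\delta(z)=\delta^{-d-2}\rho(y)$ all powers of $\delta$ cancel. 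The moment condition \eqref{K2} then reduces the double sum to $\sum_i k_i^2=|k|^2$.

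\textbf{The bound $2\delta^{-2}\|\rho\|_{L^1(B_1)}$.} Here I use $1-\cos\theta\le2$ together with $\int_{B_\delta}\rho_\delta(z)\,\mathrm dz=\delta^{-2}\|\rho\|_{L^1(B_1)}$, which follows from the same change of variables. This establishes \eqref{4.1}.

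\textbf{Operator bounds.} By Parseval and \eqref{Hnorm},
\begin{equation*}
	\|\mathcal L_\delta v\|_{H^q}^2=\sum_k(1+|k|^2)^q\lambda_\delta(k)^2|\widehat v_k|^2 .
\end{equation*}
For \eqref{4.2}, I bound $\lambda_\delta(k)^2\le|k|^4\le(1+|k|^2)^2$, which yields $\|v\|_{H^{q+2}}^2$ uniformly in $\delta$. For \eqref{4.3}, I instead bound $\lambda_\delta(k)^2\le(2\delta^{-2}\|\rho\|_{L^1})^2$. For real $q$ these bounds are first obtained on trigonometric polynomials and then extended by density, or simply read off as multiplier bounds in the Fourier-series definition of $H^q$.

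\textbf{Commutation and symmetry \eqref{4.4}.} Both $\Lambda^q$ and $\mathcal L_\delta$ are Fourier multipliers, with symbols $(1+|k|^2)^{q/2}$ and $-\lambda_\delta(k)$ respectively. They therefore commute on each mode $e_k$, and hence on $H^q$ by continuity, using \eqref{4.3}. Since $\lambda_\delta$ is real,
\begin{equation*}
	(\mathcal L_\delta v,w)_{H^q}=-\sum_k(1+|k|^2)^q\lambda_\delta(k)\widehat v_k\overline{\widehat w_k}=(v,\mathcal L_\delta w)_{H^q}.
\end{equation*}

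There is no real obstacle here. The only point needing care is the change of variables in the second-moment computation, checking that the scaling $\delta^{-d-2}$ exactly cancels $\delta^{d}\cdot\delta^{2}$, so that the bound is genuinely uniform in $\delta$.
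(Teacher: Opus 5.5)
Your proposal is correct and follows essentially the same route as the paper: the change of variables $z=\delta y$ with $0\le 1-\cos\theta\le\min\{\theta^2/2,2\}$ and \eqref{K2} gives \eqref{4.1}, the multiplier bounds read off from \eqref{Hnorm} give \eqref{4.2}--\eqref{4.3}, and the fact that $\Lambda^q$ and $\mathcal L_\delta$ are commuting Fourier multipliers with real symbol gives \eqref{4.4}. Your explicit check that the factors $\delta^{-d-2}\cdot\delta^{d}\cdot\delta^{2}$ cancel is exactly the detail the paper leaves implicit.
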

\begin{proof}
	Changing variables $z=\delta\xi$ in \eqref{2.4} and using $0\le1-\cos\theta\le\min\{\theta^2/2,2\}$ together with \eqref{K2} give \eqref{4.1}. The identity $\widehat{\mathcal L_\delta v}_k=-\lambda_\delta(k)\widehat v_k$, \eqref{Hnorm}, and the two bounds in \eqref{4.1} yield \eqref{4.2} and \eqref{4.3}. Finally, $\Lambda^q$ and $\mathcal L_\delta$ are commuting Fourier multipliers, and the symbol $-\lambda_\delta(k)$ is real. These facts give both identities in \eqref{4.4}.
\end{proof}

Next, we define the nonlinear mapping
\begin{equation}\label{4.5}
	\mathcal N(v)=f(|v|^2)v.
\end{equation}
\begin{lemma}\label{lemma4.2}
	Let $q$ be a nonnegative integer satisfying $q>d/2$ and $q\le r$, and the assumption \eqref{N1} holds. Then the mapping $\mathcal N:H^q(\mathbb T^d)\to H^q(\mathbb T^d)$ is locally Lipschitz, i.e.,  for every $R>0$, there exists $C_R>0$ such that
	\begin{equation}\label{4.6}
		\|\mathcal N(v)-\mathcal N(w)\|_{H^q}\le C_R\|v-w\|_{H^q}
	\end{equation}
	whenever $\|v\|_{H^q}+\|w\|_{H^q}\le R$.
	In addition, there exists a nondecreasing function $C_q:[0,\infty)\to(0,\infty)$ that depends on $f$, such that
	\begin{equation}\label{4.7}
		\|\mathcal N(v)\|_{H^q}\le C_q(\|v\|_{L^\infty})\|v\|_{H^q}.
	\end{equation}
	Moreover, the mapping $\mathcal N:H^q\to H^q$ is twice continuously Fréchet differentiable with respect to the real and imaginary parts. For every $R>0$, $\ell\in\{1,2\}$ and $\|v\|_{H^q}\le R$, its derivatives satisfy
	\begin{equation}
		\|D^\ell\mathcal N(v)[w_1,\ldots,w_\ell]\|_{H^q}
		\le C_R\prod_{j=1}^\ell\|w_j\|_{H^q}.
	\end{equation}
\end{lemma}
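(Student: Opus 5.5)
The plan is to reduce everything to Lemma \ref{lemmaA} by viewing $\mathcal N$ as a real composition map. Write $v=v_1+\mathrm i v_2$ and identify $v$ with $(v_1,v_2)\in H^q(\mathbb T^d;\mathbb R^2)$. Then
\begin{equation*}
	\mathcal N(v)=\Phi\circ(v_1,v_2),\qquad
	\Phi(y_1,y_2)=\bigl(f(y_1^2+y_2^2)y_1,\;f(y_1^2+y_2^2)y_2\bigr),
\end{equation*}
so that $\Phi:\mathbb R^2\to\mathbb R^2$ and $\Phi(0)=0$. The norm $\|v\|_{H^q}^2=\|v_1\|_{H^q}^2+\|v_2\|_{H^q}^2$ is equivalent to the product norm, and $\|v\|_{L^\infty}$ is comparable to $\|(v_1,v_2)\|_{L^\infty}$. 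Hence every statement transfers back to complex notation with harmless constant factors.

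The first step is to check the regularity hypothesis $\Phi\in C^{q+3}_{\mathrm{loc}}(\mathbb R^2;\mathbb R^2)$. The map $y\mapsto|y|^2$ is a polynomial and therefore smooth. By \eqref{N1}, $f\in C^{r+3}_{\mathrm{loc}}([0,\infty))$. Composing, $y\mapsto f(|y|^2)$ is $C^{r+3}$ on $\mathbb R^2$, and multiplying by $y_\ell$ preserves this. Since $q\le r$, we get $\Phi\in C^{q+3}_{\mathrm{loc}}$.

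Two points need care here:
\begin{itemize}
	\item $f$ is defined only on $[0,\infty)$. This is resolved by reading \eqref{N1} as one-sided derivatives at $0$, or equivalently by extending $f$ to a $C^{r+3}$ function on $\mathbb R$ (Whitney/Seeley extension). This changes nothing, since $|y|^2\ge0$.
	\item The chain rule for $f\circ|y|^2$ at $y=0$ involves only derivatives of $f$ at $0^+$, which exist by assumption.
\end{itemize}
I expect this bookkeeping to be the only genuine subtlety.

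With the hypothesis verified, Lemma \ref{lemmaA} applied with $m=\ell=2$ gives the conclusions directly:
\begin{itemize}
	\item \eqref{A1} gives the local Lipschitz bound \eqref{4.6}.
	\item Since $\Phi(0)=0$, \eqref{A3} gives \eqref{4.7} with a nondecreasing $C_q$ depending only on $f$, after adjusting constants for the norm equivalences above.
	\item The $C^2$ Fréchet differentiability and \eqref{A2} for $j=1,2$ give the derivative bounds, understood with respect to $(\operatorname{Re}v,\operatorname{Im}v)$. This explains the phrase \emph{with respect to the real and imaginary parts}: $\mathcal N$ is not complex-differentiable.
\end{itemize}
Finally, I will absorb the radius convention: in \eqref{4.6} the condition $\|v\|_{H^q}+\|w\|_{H^q}\le R$ implies the same condition for the real pairs up to a factor of $\sqrt2$. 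The constant $C_R$ is then taken as $C_{q,\sqrt2R}$ from Lemma \ref{lemmaA}.
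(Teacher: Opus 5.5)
Your proposal is correct and is essentially the paper's proof: you identify $\mathbb C$ with $\mathbb R^2$, write $\mathcal N$ as composition with $\Phi(y)=f(|y|^2)y$, use \eqref{N1} and $q\le r$ to get $\Phi\in C^{q+3}_{\mathrm{loc}}(\mathbb R^2;\mathbb R^2)$ with $\Phi(0)=0$, and read off all three conclusions from Lemma \ref{lemmaA}. Your extra remarks on the one-sided regularity of $f$ at $0$ and on the norm equivalence between complex-valued and $\mathbb R^2$-valued $H^q$ are correct and only fill in details the paper leaves implicit.
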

\begin{proof}
	Identify $\mathbb C$ with $\mathbb R^2$ and set $\Phi(z_1,z_2)=f(z_1^2+z_2^2)(z_1,z_2)$. Then $\mathcal N$ is the pointwise composition induced by $\Phi$. Assumption \eqref{N1} and $q\le r$ imply $\Phi\in C_{\mathrm{loc}}^{q+3}(\mathbb R^2;\mathbb R^2)$, and $\Phi(0)=0$. Hence \eqref{A1}--\eqref{A3} in Lemma \ref{lemmaA} give \eqref{4.6}, \eqref{4.7}, the stated differentiability, and the derivative bounds.
\end{proof}
\subsection{Well-posedness and conservation laws}\label{sec4.2}
For fixed $\delta>0$, the nonlocal operator is bounded on $H^q(\mathbb T^d)$. Therefore, Equation \eqref{2.5} can be treated as an ordinary differential equation in $H^q(\mathbb T^d)$. We next establish its global well-posedness for a fixed horizon.
\begin{theorem}[Global well-posedness]\label{theorem4.3}
	Let $q$ be a nonnegative integer satisfying $d/2<q\le r$, and let $u_{\delta,0}\in H^q(\mathbb T^d)$. Under the assumptions of \eqref{K1}, \eqref{K2} and \eqref{N1}, for every $\delta\in(0,\overline\delta]$, \eqref{2.5} has a unique global solution
	\begin{equation}\label{4.8}
		u_\delta\in C^1\bigl([0,\infty);H^q(\mathbb T^d)\bigr).
	\end{equation}
	Moreover, for every $T>0$ and $R>0$, there exists $C_{\delta,T,R}>0$ such that any two solutions $u_\delta$ and $v_\delta$ with
	\begin{equation}
		\|u_{\delta,0}\|_{H^q}+\|v_{\delta,0}\|_{H^q}\le R
	\end{equation}
	satisfy
	\begin{equation}
		\sup_{0\le t\le T}\|u_\delta(t)-v_\delta(t)\|_{H^q}\le C_{\delta,T,R}\|u_{\delta,0}-v_{\delta,0}\|_{H^q}.
	\end{equation}
\end{theorem}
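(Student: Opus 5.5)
We treat \eqref{2.5} as an ODE $\partial_t u_\delta=\Psi_\delta(u_\delta)$ in the Banach space $H^q(\mathbb T^d)$, where
\begin{equation*}
\Psi_\delta(v)=\mathrm i\,\mathcal L_\delta v-\mathrm i\,\mathcal N(v).
\end{equation*}
Local existence and uniqueness follow from Picard--Lindel\"of. By \eqref{4.3}, $\mathcal L_\delta$ is a bounded linear operator on $H^q$ for fixed $\delta$, with norm at most $2\delta^{-2}\|\rho\|_{L^1(B_1)}$. By Lemma \ref{lemma4.2}, $\mathcal N$ is locally Lipschitz on $H^q$, since $d/2<q\le r$. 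Hence $\Psi_\delta$ is locally Lipschitz on $H^q$. This gives a unique maximal solution $u_\delta\in C^1([0,T^*);H^q)$ with the blow-up alternative: if $T^*<\infty$, then $\|u_\delta(t)\|_{H^q}\to\infty$ as $t\uparrow T^*$. The $C^1$ regularity holds because the right-hand side is continuous in time with values in $H^q$.

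The main step is an a priori bound excluding blow-up. First, mass is conserved: taking the $L^2$ inner product of the equation with $u_\delta$ and the real part, $(\mathcal L_\delta u,u)$ is real by \eqref{integralbyparts} and $(f(|u|^2)u,u)$ is real since $f$ is real. Hence $\tfrac{d}{dt}\|u_\delta\|_{L^2}^2=0$, which is legitimate because $u_\delta\in C^1([0,T^*);L^2)$. The key observation is that for fixed $\delta$ we do not need energy to control $L^\infty$. Write the equation in Duhamel form with the group $e^{\mathrm i t\mathcal L_\delta}$, which is unitary on every $H^q$ because the symbol is real (Lemma \ref{lemma4.1}). Then
\begin{equation*}
\|u_\delta(t)\|_{H^q}\le\|u_{\delta,0}\|_{H^q}+\int_0^t C_q(\|u_\delta(s)\|_{L^\infty})\|u_\delta(s)\|_{H^q}\,\mathrm ds
\end{equation*}
by \eqref{4.7}. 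So it suffices to bound $\|u_\delta\|_{L^\infty}$ on bounded time intervals.

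The $L^\infty$ bound is the point I expect to be the main obstacle. For fixed $\delta$, $\mathcal L_\delta v=\rho_\delta*v-\|\rho_\delta\|_{L^1}v$, so $\mathcal L_\delta$ is also bounded on $L^\infty$, with norm at most $2\|\rho_\delta\|_{L^1}=2\delta^{-2}\|\rho\|_{L^1(B_1)}$. Pointwise in $x$, since $f$ is real,
\begin{equation*}
\partial_t|u_\delta|^2=2\,\mathrm{Re}\bigl(\overline{u_\delta}\,\partial_t u_\delta\bigr)=-2\,\mathrm{Im}\bigl(\overline{u_\delta}\,\mathcal L_\delta u_\delta\bigr).
\end{equation*}
The nonlinear term drops out, so $\partial_t|u_\delta|^2\le 2|u_\delta|\,\|\mathcal L_\delta u_\delta\|_{L^\infty}$. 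Therefore $\|u_\delta(t)\|_{L^\infty}\le\|u_{\delta,0}\|_{L^\infty}+C_\delta\int_0^t\|u_\delta\|_{L^\infty}$. To justify this pointwise computation I would use $H^q\hookrightarrow C^0$ and $u_\delta\in C^1([0,T^*);C^0)$, or equivalently use the integrated form $u_\delta=e^{-\mathrm i t\,f(|u_\delta|^2)}\cdot(\ldots)$. Gronwall then gives $\|u_\delta(t)\|_{L^\infty}\le Ce^{C_\delta t}\|u_{\delta,0}\|_{H^q}$. Inserting this into the $H^q$ inequality and applying Gronwall again gives a bound on $\|u_\delta(t)\|_{H^q}$ that is finite for every finite $t$. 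Hence $T^*=\infty$. Energy conservation is not needed here, and the constants may depend on $\delta$ as the statement allows.

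For Lipschitz dependence, fix $T$ and $R$. The bounds above give $\sup_{[0,T]}(\|u_\delta\|_{H^q}+\|v_\delta\|_{H^q})\le R'(\delta,T,R)$. The difference $w=u_\delta-v_\delta$ satisfies
\begin{equation*}
\|w(t)\|_{H^q}\le\|w(0)\|_{H^q}+\int_0^t\bigl(C_\delta+C_{R'}\bigr)\|w\|_{H^q},
\end{equation*}
using \eqref{4.3} and \eqref{4.6}. Gronwall yields the stated estimate with $C_{\delta,T,R}=e^{(C_\delta+C_{R'})T}$.
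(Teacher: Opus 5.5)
Your proposal is correct and follows essentially the same route as the paper. Both arguments use Lipschitz-on-bounded-sets ODE theory in $H^q$, then an $L^\infty$ bound that exploits the fact that the nonlinearity only rotates the phase pointwise while $\mathcal L_\delta$ is bounded on $L^\infty$ with norm $2\delta^{-2}\|\rho\|_{L^1(B_1)}$, then a Gronwall bound in $H^q$ via \eqref{4.7}, and finally a Gronwall stability estimate.

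The differences are cosmetic. You differentiate $|u_\delta|^2$ pointwise, where the paper uses the gauge $w_\delta=e^{\mathrm i\Theta}u_\delta$ with $\Theta(x,t)=\int_0^t f(|u_\delta(x,\xi)|^2)\,\mathrm d\xi$; this, not $e^{-\mathrm i t f(|u_\delta|^2)}$, is the correct integrated form for your aside. You also use Duhamel with the unitary group $e^{\mathrm i t\mathcal L_\delta}$, where the paper uses the $H^q$ energy identity together with \eqref{4.4}.
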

\begin{proof}
	Equation \eqref{2.5} is equivalent to
	\begin{equation}\label{evolution equation}
		\partial_tu_\delta=\mathrm i\mathcal L_\delta u_\delta-\mathrm i\mathcal N(u_\delta).
	\end{equation}
	For fixed $\delta>0$, \eqref{4.3} in Lemma \ref{lemma4.1} and \eqref{4.6} in Lemma \ref{lemma4.2} show that the right-hand side of \eqref{evolution equation} is locally Lipschitz on $H^q$. The Banach-space ordinary differential equation theorem \cite{deimling2006ordinary} therefore gives a unique maximal solution $u_\delta\in C^1([0,T_{\max});H^q)$.
	
	To exclude $T_{\max}<\infty$, define
	\begin{equation}
		\Theta(x,t)=\int_0^t f(|u_\delta(x,\xi)|^2)\,\mathrm d\xi,
		\qquad w_\delta(x,t)=e^{\mathrm i\Theta(x,t)}u_\delta(x,t).
	\end{equation}
	Using \eqref{evolution equation},
	\begin{equation}
		\begin{aligned}
			\partial_tw_\delta
			&=\mathrm i(\partial_t\Theta)e^{\mathrm i\Theta}u_\delta+e^{\mathrm i\Theta}\partial_tu_\delta\\
			&=\mathrm i f(|u_\delta|^2)e^{\mathrm i\Theta}u_\delta+e^{\mathrm i\Theta}\left[\mathrm i\mathcal L_\delta u_\delta-\mathrm i f(|u_\delta|^2)u_\delta\right]\\
			&=\mathrm i e^{\mathrm i\Theta}\mathcal L_\delta u_\delta.
		\end{aligned}
	\end{equation}
	Since $|w_\delta|=|u_\delta|$, integration in time and \eqref{2.1} give
	\begin{equation}
		\|u_\delta(t)\|_{L^\infty}\le\|u_{\delta,0}\|_{L^\infty}+2\delta^{-2}\|\rho\|_{L^1(B_1)}\int_0^t\|u_\delta(s)\|_{L^\infty}\,\mathrm ds.
	\end{equation}
	If $T_{\max}<\infty$, Gr\"onwall's inequality gives a uniform $L^\infty$ bound on $[0,T_{\max})$. Taking the $H^q$ inner product of \eqref{evolution equation} with $u_\delta$, taking real parts, and using \eqref{4.4} in Lemma \ref{lemma4.1} and \eqref{4.7} in Lemma \ref{lemma4.2} give
	\begin{equation}
		\frac{\mathrm d}{\mathrm dt}\|u_\delta(t)\|_{H^q}^2\le2C_q(\|u_\delta(t)\|_{L^\infty})\|u_\delta(t)\|_{H^q}^2.
	\end{equation}
	A second application of Gr\"onwall's inequality gives a uniform $H^q$ bound on $[0,T_{\max})$. Equation \eqref{evolution equation}, \eqref{4.3} in Lemma \ref{lemma4.1}, and \eqref{4.7} in Lemma \ref{lemma4.2} also give $\sup_{0\le t<T_{\max}}\|\partial_tu_\delta(t)\|_{H^q}<\infty$. Hence $u_\delta(t)$ has a limit in $H^q$ as $t\uparrow T_{\max}$, and the local existence theorem extends the solution beyond $T_{\max}$, and leads to a contradiction.
	
	If the initial norms are bounded by $R$, the estimates just obtained place both solutions in a fixed $H^q$-ball depending only on $\delta$, $T$, and $R$. Subtracting the two copies of \eqref{evolution equation}, using \eqref{4.4} in Lemma \ref{lemma4.1} and \eqref{4.6} in Lemma \ref{lemma4.2}, and applying Gronwall's inequality give the stated stability estimate.
\end{proof}
\begin{theorem}[Mass and energy conservation]\label{theorem4.4}
	Under the assumptions of Theorem \ref{theorem4.3}, the solution of \eqref{2.5} satisfies
	\begin{equation}\label{4.10}
		M(u_\delta(t))=M(u_{\delta,0})
	\end{equation}
	and
	\begin{equation}\label{4.11}
		E_\delta(u_\delta(t))=E_\delta(u_{\delta,0})
	\end{equation}
	for all $t\ge0$, where $M$ and $E_\delta$ are defined in \eqref{2.7} and \eqref{2.8}.
\end{theorem}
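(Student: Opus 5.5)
The plan is to differentiate $M(u_\delta(t))$ and $E_\delta(u_\delta(t))$ in time and show both derivatives vanish. By Theorem \ref{theorem4.3}, $u_\delta\in C^1([0,\infty);H^q)$ with $q>d/2$. Hence $u_\delta\in C^1([0,\infty);L^2)$ and also $u_\delta\in C([0,\infty);L^\infty)$. This is enough regularity to justify the differentiation, because for fixed $\delta$ the operator $\mathcal L_\delta$ is bounded on $L^2$ by \eqref{4.3}. No regularization argument should be needed.

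\textbf{Mass.} Since $t\mapsto\|u_\delta(t)\|_{L^2}^2$ is $C^1$, we have
\[
\frac{\mathrm d}{\mathrm dt}M(u_\delta)=2\,\mathrm{Re}\,(\partial_tu_\delta,u_\delta).
\]
By \eqref{evolution equation},
\[
(\partial_tu_\delta,u_\delta)=\mathrm i(\mathcal L_\delta u_\delta,u_\delta)-\mathrm i\bigl(f(|u_\delta|^2)u_\delta,u_\delta\bigr).
\]
The first inner product equals $-a_\delta(u_\delta,u_\delta)$, which is real by \eqref{integralbyparts}. The second is $\frac{1}{(2\pi)^d}\int f(|u_\delta|^2)|u_\delta|^2\,\mathrm dx$, which is real because $f$ is real-valued. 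Both terms are therefore purely imaginary, the real part vanishes, and \eqref{4.10} follows.

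\textbf{Energy.} For the quadratic part, $a_\delta$ is a bounded Hermitian form on $L^2$, so
\[
\frac{\mathrm d}{\mathrm dt}\tfrac12a_\delta(u_\delta,u_\delta)=\mathrm{Re}\,a_\delta(u_\delta,\partial_tu_\delta)=\mathrm{Re}\,(-\mathcal L_\delta u_\delta,\partial_tu_\delta).
\]
For the potential part, $f\in C^1$ and $u_\delta$ is bounded in $L^\infty$ on compact time intervals. The chain rule applied pointwise, with differentiation under the integral justified by dominated convergence or by the $C^1$ Fréchet differentiability of $v\mapsto F(|v|^2)$ on $H^q$ from Lemma \ref{lemmaA}, gives
\[
\frac{\mathrm d}{\mathrm dt}\frac{1}{2(2\pi)^d}\int F(|u_\delta|^2)\,\mathrm dx=\mathrm{Re}\,\bigl(f(|u_\delta|^2)u_\delta,\partial_tu_\delta\bigr).
\]
Adding the two pieces,
\[
\frac{\mathrm d}{\mathrm dt}E_\delta(u_\delta)=\mathrm{Re}\,\bigl(-\mathcal L_\delta u_\delta+\mathcal N(u_\delta),\partial_tu_\delta\bigr)=\mathrm{Re}\,(\mathrm i\partial_tu_\delta,\partial_tu_\delta)=\mathrm{Re}\bigl(\mathrm i\|\partial_tu_\delta\|_{L^2}^2\bigr)=0,
\]
where the middle equality uses \eqref{2.5}. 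This gives \eqref{4.11}.

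The only delicate point is the rigorous justification of differentiating the nonlinear energy term. I expect it to be harmless here. With $q>d/2$, the map $t\mapsto|u_\delta(t)|^2$ is $C^1$ into $H^q\subset C(\mathbb T^d)$, since $H^q$ is an algebra. Then $F\circ|u_\delta|^2$ is $C^1$ in $t$ uniformly in $x$, which permits exchanging the time derivative with the spatial integral. Finally, integrating the vanishing derivatives from $0$ to $t$ yields both identities for all $t\ge0$.
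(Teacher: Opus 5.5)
Your proposal is correct and follows essentially the same route as the paper. The paper tests \eqref{2.5} against $u_\delta$ and takes imaginary parts for the mass, then tests against $\partial_t u_\delta$ and takes real parts for the energy, using \eqref{integralbyparts}, the reality of $f$, and $F'=f$; these are exactly the identities you obtain by differentiating $M(u_\delta(t))$ and $E_\delta(u_\delta(t))$, and your justification of the time differentiation via $u_\delta\in C^1([0,\infty);H^q)$ with $H^q\hookrightarrow C(\mathbb T^d)$ supplies a step the paper leaves implicit.
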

\begin{proof}
	Taking the $L^2$ inner product of \eqref{2.5} with $u_\delta$ and taking imaginary parts gives \eqref{4.10}, because \eqref{integralbyparts} and the reality of $f$ make both terms on the right-hand side real. Taking the $L^2$ inner product of \eqref{2.5} with $\partial_tu_\delta$ and taking real parts gives \eqref{4.11} by \eqref{integralbyparts} and $F'=f$.
\end{proof}
\subsection{Consistency and nonlocal-to-local convergence}\label{sec4.3}
We next compare $\mathcal L_\delta$ with the Laplacian. The following lemma bounds the difference between their Fourier symbols uniformly in $k$ and derives the corresponding estimates for $\mathcal L_\delta-\Delta$ and $a_\delta(v,v)-\|\nabla v\|_{L^2}^2$.
\begin{lemma}\label{lemma4.5}
	Assume \eqref{K1} and \eqref{K2} hold and define
	\begin{equation}\label{m4rho}
		m_4(\rho)=\int_{B_1}|z|^4\rho(z)\,\mathrm dz,
	\end{equation}
	which is finite according to \eqref{K1}. Then there exists a constant $C_\rho>0$, depending only on $m_4(\rho)$, such that
	\begin{equation}\label{4.13}
		\bigl|\lambda_\delta(k)-|k|^2\bigr|\le C_\rho\delta^2|k|^4,\quad k\in\mathbb Z^d.
	\end{equation}
	Then for every $q\in\mathbb R$ and $v\in H^{q+4}(\mathbb T^d)$,
	\begin{equation}\label{4.14}
		\|(\mathcal L_\delta-\Delta)v\|_{H^q}\le C_\rho\delta^2\|v\|_{H^{q+4}}.
	\end{equation}
	Moreover, for every $v\in H^2(\mathbb T^d)$,
	\begin{equation}\label{4.15}
		\left|a_\delta(v,v)-\|\nabla v\|_{L^2}^2\right|\le C_\rho\delta^2\|v\|_{H^2}^2.
	\end{equation}
\end{lemma}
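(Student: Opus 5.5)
The plan is to work entirely at the level of Fourier symbols and then transfer the symbol estimate to the operator and form estimates by Parseval.

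\textbf{Step 1: the symbol bound \eqref{4.13}.} Rescale $z=\delta\xi$ in \eqref{2.4}, which gives
\[
\lambda_\delta(k)=\delta^{-2}\int_{B_1}\rho(\xi)\bigl(1-\cos(\delta k\cdot\xi)\bigr)\,\mathrm d\xi .
\]
Next rewrite $|k|^2$ using the moment condition \eqref{K2}. Since $\tfrac12\int_{B_1}\rho(\xi)\xi_i\xi_j\,\mathrm d\xi=\delta_{ij}$, we have
\[
|k|^2=\delta^{-2}\int_{B_1}\rho(\xi)\,\frac{(\delta k\cdot\xi)^2}{2}\,\mathrm d\xi .
\]
Subtracting the two expressions,
\[
\lambda_\delta(k)-|k|^2=\delta^{-2}\int_{B_1}\rho(\xi)\Bigl(1-\cos\theta-\frac{\theta^2}{2}\Bigr)\mathrm d\xi,\qquad \theta=\delta k\cdot\xi .
\]
We then use the elementary Taylor inequality $|1-\cos\theta-\theta^2/2|\le\theta^4/24$, valid for all real $\theta$ because the cosine remainder alternates. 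Together with $|\theta|\le\delta|k||\xi|$ this gives
\[
\bigl|\lambda_\delta(k)-|k|^2\bigr|\le\frac{\delta^2|k|^4}{24}\,m_4(\rho),
\]
so $C_\rho=m_4(\rho)/24$ works. The moment $m_4(\rho)$ is finite because $\rho\in L^1(B_1)$ and $|\xi|^4\le1$ on $B_1$.

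\textbf{Step 2: the operator bound \eqref{4.14}.} The operator $\mathcal L_\delta-\Delta$ is a Fourier multiplier with symbol $-(\lambda_\delta(k)-|k|^2)$. By \eqref{Hnorm} and Step 1,
\[
\|(\mathcal L_\delta-\Delta)v\|_{H^q}^2\le C_\rho^2\delta^4\sum_{k}(1+|k|^2)^q|k|^8|\widehat v_k|^2\le C_\rho^2\delta^4\|v\|_{H^{q+4}}^2 ,
\]
using $|k|^8\le(1+|k|^2)^4$. This holds for every real $q$.

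\textbf{Step 3: the energy bound \eqref{4.15}.} Use \eqref{avvpar} together with $\|\nabla v\|_{L^2}^2=\sum_k|k|^2|\widehat v_k|^2$, which follows from the normalized inner product. Then
\[
\bigl|a_\delta(v,v)-\|\nabla v\|_{L^2}^2\bigr|\le\sum_k\bigl|\lambda_\delta(k)-|k|^2\bigr||\widehat v_k|^2\le C_\rho\delta^2\sum_k|k|^4|\widehat v_k|^2\le C_\rho\delta^2\|v\|_{H^2}^2 .
\]

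The only genuinely nontrivial point is Step 1. It needs the global inequality $|1-\cos\theta-\theta^2/2|\le\theta^4/24$ for all $\theta$, not just small ones, since $\delta|k|$ is unbounded. It also needs the second-moment identity, which uses the $\delta_{ij}$ normalization and the symmetry of $\rho$. Everything after Step 1 is routine Parseval bookkeeping.
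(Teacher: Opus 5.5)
Your proposal is correct and follows the paper's proof essentially step for step. You rescale $z=\delta\xi$ in \eqref{2.4}, rewrite $|k|^2$ via \eqref{K2}, and apply the global Taylor bound $|1-\cos\theta-\theta^2/2|\le\theta^4/24$ (most cleanly justified by the Lagrange remainder $\cos(c)\,\theta^4/24$) to get \eqref{4.13} with the same constant $C_\rho=m_4(\rho)/24$, after which \eqref{4.14} and \eqref{4.15} follow from \eqref{Hnorm} and \eqref{avvpar} exactly as you describe.
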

\begin{proof}
	Taylor's formula applied to \eqref{2.4}, after the change of variables $z=\delta\xi$, and the moment condition \eqref{K2} give \eqref{4.13} with $C_\rho=m_4(\rho)/24$. Applying \eqref{4.13} to the Sobolev norm \eqref{Hnorm} gives \eqref{4.14}. Applying \eqref{4.13} to the Fourier representation \eqref{avvpar} and using Parseval's identity gives \eqref{4.15}.
\end{proof}

Next, we assume that the regularity index $r$ in \eqref{N1} satisfies $r>\frac d2$. Let $u$ be a solution of the local NLS \eqref{2.6} satisfying
\begin{equation}\label{4.17}
	u\in C\bigl([0,T];H^{r+4}(\mathbb T^d)\bigr).
\end{equation}
Furthermore, the nonlocal and local initial values are assumed to satisfy
\begin{equation}\label{4.18}
	\|u_{\delta,0}-u_0\|_{H^r}\le C_{\mathrm{in}}\delta^2,
	\quad 0<\delta\le\overline\delta,
\end{equation}
with $C_{\mathrm{in}}$ independent of $\delta$. Then we have the following nonlocal-to-local limit theorem.
\begin{theorem}[Quantitative nonlocal-to-local limit]\label{theorem4.6}
	Assume \eqref{K1}, \eqref{K2}, \eqref{N1} and \eqref{4.17}, \eqref{4.18} hold with $r>d/2$. Then there exist $\delta_0\in(0,\overline\delta]$ and a constant $C>0$, independent of $\delta$, such that
	\begin{equation}\label{4.19}
		\sup_{0\le t\le T}\|u_\delta(t)-u(t)\|_{H^r}\le C\delta^2,\quad 0<\delta\le\delta_0.
	\end{equation}
	In particular,
	\begin{equation}\label{4.20}
		\sup_{0<\delta\le\delta_0}\|u_\delta\|_{L^\infty(0,T;H^r)}\le C.
	\end{equation}
\end{theorem}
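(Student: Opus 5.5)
We will estimate the error $e_\delta=u_\delta-u$ in $H^r$ with an energy method and a bootstrap on the size of $u_\delta$. Subtracting \eqref{2.6} from \eqref{2.5} gives
\begin{equation*}
	\mathrm i\,\partial_te_\delta=-\mathcal L_\delta e_\delta-(\mathcal L_\delta-\Delta)u+\mathcal N(u_\delta)-\mathcal N(u),\qquad e_\delta(0)=u_{\delta,0}-u_0.
\end{equation*}
The term $(\mathcal L_\delta-\Delta)u$ is the consistency residual. By \eqref{4.14} with $q=r$ and by \eqref{4.17},
\begin{equation*}
	\sup_{0\le t\le T}\|(\mathcal L_\delta-\Delta)u(t)\|_{H^r}\le C_\rho\delta^2\sup_{0\le t\le T}\|u(t)\|_{H^{r+4}}=:C_1\delta^2 .
\end{equation*}
Here $C_1$ does not depend on $\delta$.

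Set $R_0=\sup_t\|u(t)\|_{H^r}$ and $R=2R_0+2$. Take $L=C_R$ from \eqref{4.6} with $q=r$; this is admissible because $r>d/2$ and $r\le r$. For fixed $\delta$, Theorem \ref{theorem4.3} gives $u_\delta\in C^1([0,T];H^r)$, so $e_\delta$ is $C^1$ in $H^r$. We take the $H^r$ inner product of the error equation with $e_\delta$ and keep the real part after multiplying by $-\mathrm i$. By \eqref{4.4}, $(\mathcal L_\delta e_\delta,e_\delta)_{H^r}$ is real, so the main linear term drops out exactly. This cancellation is what makes the argument uniform in $\delta$: the bound \eqref{4.3} blows up like $\delta^{-2}$ and is never used. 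On the set where $\|u_\delta\|_{H^r}\le R_0+1$, we have $\|u_\delta\|_{H^r}+\|u\|_{H^r}\le R$, and therefore
\begin{equation*}
	\frac{\mathrm d}{\mathrm dt}\|e_\delta\|_{H^r}\le L\|e_\delta\|_{H^r}+C_1\delta^2 .
\end{equation*}
If $\|e_\delta\|_{H^r}$ vanishes somewhere, we argue with $\frac{\mathrm d}{\mathrm dt}\|e_\delta\|_{H^r}^2$ and Young's inequality instead. Gronwall's inequality together with \eqref{4.18} then gives
\begin{equation*}
	\|e_\delta(t)\|_{H^r}\le e^{LT}\bigl(C_{\mathrm{in}}+TC_1\bigr)\delta^2=:C\delta^2 .
\end{equation*}

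The main obstacle is that $u_\delta$ is only known to lie in the ball where the local Lipschitz constant applies; no a priori bound is available uniformly in $\delta$. We handle this with a continuity argument. Let
\begin{equation*}
	T^*=\sup\{t\le T:\ \|u_\delta(s)\|_{H^r}\le R_0+1\ \text{for } s\in[0,t]\}.
\end{equation*}
Choose $\delta_0$ so that $C\delta_0^2\le\tfrac12$ and $C_{\mathrm{in}}\delta_0^2<1$. The second condition makes $T^*>0$ by continuity. On $[0,T^*]$ the estimate above gives $\|u_\delta\|_{H^r}\le R_0+\tfrac12$. Continuity of $t\mapsto u_\delta(t)$ in $H^r$ then forces $T^*=T$. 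This proves \eqref{4.19}.

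The bound \eqref{4.20} follows from the triangle inequality: $\|u_\delta\|_{H^r}\le R_0+C\delta^2\le R_0+1$. The constants $C$ and $\delta_0$ depend only on $T$, $C_\rho$, $C_{\mathrm{in}}$, $\sup_t\|u\|_{H^{r+4}}$ and $f$, never on $\delta$.
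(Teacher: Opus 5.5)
Your proposal is correct and follows the paper's proof essentially step by step: the $H^r$ energy estimate in which the $\mathcal L_\delta$ term drops out by the self-adjointness \eqref{4.4}, the local Lipschitz bound \eqref{4.6} in a fixed ball, the consistency bound \eqref{4.14}, Gronwall's inequality, and a continuity (bootstrap) argument. The differences are cosmetic: you bootstrap on $\|u_\delta\|_{H^r}\le R_0+1$ instead of $\|e_\delta\|_{H^r}\le 1$, which is equivalent, and you explicitly handle the possible vanishing of $\|e_\delta\|_{H^r}$, a point the paper passes over.
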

\begin{proof}
	Set $e_\delta=u_\delta-u$. Subtracting \eqref{2.6} from \eqref{2.5} gives
	\begin{equation}
		\mathrm i\partial_te_\delta=-\mathcal L_\delta e_\delta+\mathcal N(u_\delta)-\mathcal N(u)-(\mathcal L_\delta-\Delta)u.
	\end{equation}
	Taking the $H^r$ inner product with $e_\delta$, taking real parts after multiplication by $-\mathrm i$, and using \eqref{4.4} in Lemma \ref{lemma4.1} give
	\begin{equation}
		\frac{\mathrm d}{\mathrm dt}\|e_\delta\|_{H^r}
		\le \|\mathcal N(u_\delta)-\mathcal N(u)\|_{H^r}+\|(\mathcal L_\delta-\Delta)u\|_{H^r}.
	\end{equation}
	On every interval on which $\|e_\delta(t)\|_{H^r}\le1$, assumption \eqref{4.17} bounds $u$, and $u_\delta=u+e_\delta$ then places both $u_\delta$ and $u$ in a fixed $H^r$-ball independent of $\delta$. Hence \eqref{4.6} in Lemma \ref{lemma4.2} and \eqref{4.14} in Lemma \ref{lemma4.5} yield
	\begin{equation}
		\frac{\mathrm d}{\mathrm dt}\|e_\delta(t)\|_{H^r}\le C\|e_\delta(t)\|_{H^r}+C_\rho\delta^2\|u(t)\|_{H^{r+4}}.
	\end{equation}
	By \eqref{4.17}, \eqref{4.18}, and Gronwall's inequality, there exists $C_T\ge C_{\mathrm{in}}$, independent of $\delta$, such that $\sup_{0\le \xi\le t}\|e_\delta(\xi)\|_{H^r}\le C_T\delta^2$ whenever $\sup_{0\le \xi\le t}\|e_\delta(\xi)\|_{H^r}\le1$. Choose $\delta_0\in(0,\overline\delta]$ so that $C_T\delta_0^2<1$. Then \eqref{4.18} starts the bootstrap, and continuity extends the estimate to $[0,T]$, proving \eqref{4.19}. Finally, \eqref{4.20} follows from \eqref{4.17}, \eqref{4.19}, and $\|u_\delta(t)\|_{H^r}\le\|u(t)\|_{H^r}+\|e_\delta(t)\|_{H^r}$.
\end{proof}
\begin{corollary}[Uniform time regularity]\label{corollary4.7}
	Under the assumptions of Theorem \ref{theorem4.6}, let $s$ be a nonnegative integer satisfying
	\begin{equation}
		s>\frac d2,\quad r\ge s+6.
	\end{equation}
	Then we have
	\begin{equation}\label{4.21}
		\begin{aligned}
			\sup_{0<\delta\le\delta_0}
			\Bigl(&\|u_\delta\|_{L^\infty(0,T;H^{s+6})}+\|\partial_tu_\delta\|_{L^\infty(0,T;H^{s+4})}\\
			&\quad+\|\partial_t^2u_\delta\|_{L^\infty(0,T;H^{s+2})}+\|\partial_t^3u_\delta\|_{L^\infty(0,T;H^s)}\Bigr)\le C.
		\end{aligned}
	\end{equation}
\end{corollary}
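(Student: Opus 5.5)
The plan is to bootstrap from the uniform bound \eqref{4.20} of Theorem \ref{theorem4.6}. Since $r\ge s+6$, the embedding $H^r\hookrightarrow H^{s+6}$ immediately gives
\[
\sup_{0<\delta\le\delta_0}\|u_\delta\|_{L^\infty(0,T;H^{s+6})}\le C .
\]
The time derivatives are then obtained by differentiating the equation $\partial_tu_\delta=\mathrm i\mathcal L_\delta u_\delta-\mathrm i\mathcal N(u_\delta)$ repeatedly in time. Each time derivative costs two spatial derivatives through the $\delta$-uniform bound \eqref{4.2}, and no derivatives through the nonlinear term by Lemma \ref{lemma4.2}. The regularity index $q$ in Lemma \ref{lemma4.2} is always at least $s>d/2$ and at most $r$, and every Sobolev ball involved is $\delta$-uniform, so all constants are independent of $\delta$. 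The bound \eqref{4.3}, whose constant blows up as $\delta\to0$, is never used.

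\textbf{Step 1.} Estimate $\partial_tu_\delta$ directly from the equation:
\[
\|\partial_tu_\delta\|_{H^{s+4}}\le\|u_\delta\|_{H^{s+6}}+C_{s+4}\bigl(\|u_\delta\|_{L^\infty}\bigr)\|u_\delta\|_{H^{s+4}}\le C,
\]
using \eqref{4.2} with $q=s+4$, \eqref{4.7} with $q=s+4\le r$, and the Sobolev embedding $H^{s+6}\hookrightarrow L^\infty$.

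\textbf{Step 2.} Differentiate in time. Since $u_\delta\in C^1([0,T];H^{r})$ and $\mathcal L_\delta$ is bounded for fixed $\delta$, the equation can be differentiated in $H^{s+2}$. The chain rule for the Fréchet derivative from Lemma \ref{lemma4.2} gives
\[
\partial_t^2u_\delta=\mathrm i\mathcal L_\delta\partial_tu_\delta-\mathrm iD\mathcal N(u_\delta)[\partial_tu_\delta].
\]
This yields $\|\partial_t^2u_\delta\|_{H^{s+2}}\le\|\partial_tu_\delta\|_{H^{s+4}}+C_R\|\partial_tu_\delta\|_{H^{s+2}}$, with $R$ the uniform $H^{s+2}$ bound on $u_\delta$.

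\textbf{Step 3.} Differentiate once more in $H^s$:
\[
\partial_t^3u_\delta=\mathrm i\mathcal L_\delta\partial_t^2u_\delta-\mathrm iD\mathcal N(u_\delta)[\partial_t^2u_\delta]-\mathrm iD^2\mathcal N(u_\delta)[\partial_tu_\delta,\partial_tu_\delta].
\]
Apply \eqref{4.2} with $q=s$ and the derivative bounds of Lemma \ref{lemma4.2} with $q=s$ to get $\|\partial_t^3u_\delta\|_{H^s}\le C$. Taking suprema over $t$ and $\delta$ then proves \eqref{4.21}.

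The main obstacle is justifying the time differentiability in Steps 2 and 3, specifically that $t\mapsto\mathcal N(u_\delta(t))$ is $C^2$ into $H^s$ with the chain-rule formula above. I would argue as follows. For fixed $\delta$, Theorem \ref{theorem4.3} (applied with $q=r$) gives $u_\delta\in C^1([0,T];H^r)$. Since $\mathcal N$ is $C^2$ on $H^{s+2}$ and on $H^s$, composing with the $C^1$ curve and feeding this back into the equation bootstraps $u_\delta\in C^2([0,T];H^{s+2})$ and then $u_\delta\in C^3([0,T];H^s)$. This argument is purely qualitative, so its $\delta$-dependence is harmless. All quantitative bounds come from \eqref{4.2} and Lemma \ref{lemma4.2}, which are uniform in $\delta$.
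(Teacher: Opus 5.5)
Your proposal is correct and follows essentially the paper's argument: you differentiate the evolution equation twice, control the linear terms with the $\delta$-uniform bound \eqref{4.2} and the nonlinear terms with the derivative bounds of Lemma \ref{lemma4.2}, and anchor everything on \eqref{4.20}. The only differences are cosmetic. The paper gets the qualitative regularity $u_\delta\in C^3([0,T];H^r)$ in one step from the fixed-$\delta$ boundedness \eqref{4.3}, bounds the derivatives in $H^{r-2j}$, and then embeds; you instead work directly in $H^{s+6-2j}$ and bootstrap the qualitative regularity downward.
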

\begin{proof}
	For fixed $\delta>0$, Theorem \ref{theorem4.3}, \eqref{4.3} in Lemma \ref{lemma4.1}, and the differentiability statement in Lemma \ref{lemma4.2} imply $u_\delta\in C^3([0,T];H^r)$. Differentiating \eqref{evolution equation} twice and applying \eqref{4.2} in Lemma \ref{lemma4.1}, the bounds for $D^\ell\mathcal N$, $\ell=1,2$, in Lemma \ref{lemma4.2}, and \eqref{4.20} in Theorem \ref{theorem4.6} give uniform bounds for $u_\delta$, $\partial_tu_\delta$, $\partial_t^2u_\delta$, and $\partial_t^3u_\delta$ in $H^r$, $H^{r-2}$, $H^{r-4}$, and $H^{r-6}$, respectively. Since $r\ge s+6$, the embeddings $H^{r-2j}\hookrightarrow H^{s+6-2j}$, $0\le j\le3$, prove \eqref{4.21}.
\end{proof}
\begin{corollary}[Convergence of the energy]\label{corollary 4.8}
	Under the assumptions of Theorem \ref{theorem4.6}, we have
	\begin{equation}\label{4.22}
		\sup_{0\le t\le T}\bigl|E_\delta(u_\delta(t))-E_0(u(t))\bigr|\le C\delta^2,\quad 0<\delta\le\delta_0.
	\end{equation}
\end{corollary}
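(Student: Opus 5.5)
We split the difference into the quadratic (dispersive) part and the potential part,
\begin{equation*}
	E_\delta(u_\delta)-E_0(u)=\tfrac12\bigl(a_\delta(u_\delta,u_\delta)-\|\nabla u\|_{L^2}^2\bigr)+\frac{1}{2(2\pi)^d}\int_{\mathbb T^d}\bigl(F(|u_\delta|^2)-F(|u|^2)\bigr)\,\mathrm dx,
\end{equation*}
and bound each term at fixed $t\in[0,T]$ by $C\delta^2$ with $C$ independent of $t$ and $\delta$. We further split the quadratic part as
\begin{equation*}
	a_\delta(u_\delta,u_\delta)-\|\nabla u\|_{L^2}^2=\bigl(a_\delta(u_\delta,u_\delta)-a_\delta(u,u)\bigr)+\bigl(a_\delta(u,u)-\|\nabla u\|_{L^2}^2\bigr).
\end{equation*}
The second bracket is at most $C_\rho\delta^2\|u(t)\|_{H^2}^2$ by \eqref{4.15} in Lemma \ref{lemma4.5}, and this is uniformly bounded by \eqref{4.17} since $r+4\ge2$.

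For the first bracket we use the sesquilinearity and Hermitian symmetry of $a_\delta$:
\begin{equation*}
	a_\delta(u_\delta,u_\delta)-a_\delta(u,u)=a_\delta(e_\delta,e_\delta)+2\,\mathrm{Re}\,a_\delta(e_\delta,u),\qquad e_\delta=u_\delta-u.
\end{equation*}
We must not use the $\delta$-dependent bound \eqref{4.3}. Instead, by \eqref{avvpar} and $\lambda_\delta(k)\le|k|^2$ from \eqref{4.1}, Cauchy--Schwarz gives $|a_\delta(v,w)|\le\|v\|_{H^1}\|w\|_{H^1}$ uniformly in $\delta$. Hence the first bracket is bounded by $\|e_\delta\|_{H^1}^2+2\|e_\delta\|_{H^1}\|u\|_{H^1}$. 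Since $r>d/2$ is an integer we have $r\ge1$, so Theorem \ref{theorem4.6} gives $\|e_\delta\|_{H^1}\le\|e_\delta\|_{H^r}\le C\delta^2$. Together with $\delta\le\delta_0\le\overline\delta$, this yields an $O(\delta^2)$ bound.

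For the potential part, the mean value theorem gives pointwise
\begin{equation*}
	|F(|u_\delta|^2)-F(|u|^2)|\le \sup_{0\le\eta\le R^2}|f(\eta)|\;\bigl||u_\delta|^2-|u|^2\bigr|\le C\,(|u_\delta|+|u|)\,|e_\delta|,
\end{equation*}
where $R$ bounds $\|u_\delta\|_{L^\infty}$ and $\|u\|_{L^\infty}$. Such an $R$ exists uniformly in $\delta$ and $t$ by the embedding $H^r\hookrightarrow L^\infty$ for $r>d/2$, combined with \eqref{4.17} and \eqref{4.20}. Integrating and applying Cauchy--Schwarz bounds the potential part by $CR\|e_\delta\|_{L^2}\le C\delta^2$, again by \eqref{4.19}; $f$ is continuous by \eqref{N1}, so the supremum is finite.

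The only delicate step is keeping every constant independent of $\delta$. This means relying on the uniform multiplier bound $\lambda_\delta(k)\le|k|^2$ rather than on \eqref{4.3}, and on the uniform $L^\infty$ bound coming from \eqref{4.20}. Taking the supremum over $t\in[0,T]$ then gives \eqref{4.22}.
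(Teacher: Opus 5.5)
Your proposal is correct and follows essentially the same route as the paper. Both arguments use the $\delta$-uniform bound $|a_\delta(v,w)|\le\|v\|_{H^1}\|w\|_{H^1}$, obtained from $\lambda_\delta(k)\le|k|^2$, together with \eqref{4.15} and \eqref{4.19} for the quadratic part, and the mean-value theorem with the uniform $L^\infty$ bound from \eqref{4.20} for the potential part. Your explicit splitting $a_\delta(e_\delta,e_\delta)+2\,\mathrm{Re}\,a_\delta(e_\delta,u)$ and the observation $r\ge1$ simply write out the steps the paper leaves implicit.
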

\begin{proof}
	For $v,w\in H^1(\mathbb T^d)$, \eqref{4.1} in Lemma \ref{lemma4.1} and \eqref{avvpar} give
	\begin{equation}\label{shuangxiannxing}
		|a_\delta(v,w)|\le\|v\|_{H^1}\|w\|_{H^1}.
	\end{equation}
	Thus \eqref{4.19} in Theorem \ref{theorem4.6}, \eqref{4.15} in Lemma \ref{lemma4.5}, and \eqref{shuangxiannxing} imply
	\begin{equation}
		|a_\delta(u_\delta,u_\delta)-\|\nabla u\|_{L^2}^2|\le C\delta^2.
	\end{equation}
	Moreover, assumption \eqref{4.17}, the bound \eqref{4.20} in Theorem \ref{theorem4.6}, the embedding $H^r\hookrightarrow L^\infty$, and the mean-value theorem give
	\begin{equation}
		\frac1{(2\pi)^d}\left|\int_{\mathbb T^d}\bigl(F(|u_\delta|^2)-F(|u|^2)\bigr)\,\mathrm dx\right|
		\le C\|u_\delta-u\|_{L^2}\le C\delta^2.
	\end{equation}
	Combining these two estimates with \eqref{2.8} and \eqref{2.9} proves \eqref{4.22}.
\end{proof}

\subsection{Dispersion properties of the nonlocal NLS model}\label{sec4.4}
The Fourier multiplier $\lambda_\delta(k)$ determines the linear frequency contribution associated with the mode $e^{\mathrm i k\cdot x}$. To study both the dispersion relation and the corresponding group velocity, we extend $\lambda_\delta$ from the discrete wave vectors $k\in\mathbb Z^d$ to $\xi\in\mathbb R^d$ by
\begin{equation}\label{4.23}
	\lambda_\delta(\xi)=\delta^{-2}\int_{B_1}\rho(z)\bigl(1-\cos(\delta\xi\cdot z)\bigr)\,\mathrm dz.
\end{equation}

At $\xi=k\in\mathbb Z^d$, \eqref{4.23} agrees with the Fourier multiplier \eqref{2.4}. The cosine term in \eqref{4.23} depends on $\xi$ only through the scaled wave vector $\delta\xi$, while the prefactor $\delta^{-2}$ determines the overall scale of the multiplier. If $\rho$ is radial, the integral depends on $\xi$ only through $\delta|\xi|$. The following proposition describes the local approximation for small $\delta|\xi|$ and the high-frequency behavior for fixed $\delta$.
\begin{proposition}[Low- and high-frequency behavior]\label{proposition4.9}
	Assume \eqref{K1} and \eqref{K2}. Recall the fourth moment $m_4(\rho)$ defined in \eqref{m4rho} and set 
	\begin{equation}
		m_0(\rho)=\int_{B_1}\rho(z)\,\mathrm dz.
	\end{equation}
	Then for every $0<\delta\le\overline\delta$, the function $\lambda_\delta$ is even, nonnegative, and continuously differentiable, with
	\begin{equation}\label{4.24}
		\nabla_\xi\lambda_\delta(\xi)=\delta^{-1}\int_{B_1}\rho(z)z\sin(\delta\xi\cdot z)\,\mathrm dz.
	\end{equation}
	Moreover,
	\begin{equation}\label{4.25}
		\begin{aligned}
			\bigl|\lambda_\delta(\xi)-|\xi|^2\bigr|
			&\le
			\frac{m_4(\rho)}{24}|\xi|^2(\delta|\xi|)^2,\\
			\bigl|\nabla_\xi\lambda_\delta(\xi)-2\xi\bigr|
			&\le
			\frac{m_4(\rho)}{6}|\xi|(\delta|\xi|)^2,
		\end{aligned}
		\qquad \xi\in\mathbb R^d.
	\end{equation}
	And for each fixed $\delta>0$, we have
	\begin{equation}\label{4.26}
		\lambda_\delta(\xi)\longrightarrow\delta^{-2}m_0(\rho),\quad
		\nabla_\xi\lambda_\delta(\xi)\longrightarrow0,
		\quad |\xi|\to\infty.
	\end{equation}
\end{proposition}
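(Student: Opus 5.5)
Evenness and nonnegativity of $\lambda_\delta$ are immediate from \eqref{4.23}: the cosine is even in $\xi$, and $1-\cos\ge0$ with $\rho\ge0$. For differentiability, the plan is to differentiate under the integral. The integrand's $\xi$-gradient is $\delta^{-2}\rho(z)\,\delta z\sin(\delta\xi\cdot z)$. On $B_1$ it is dominated by $\delta^{-1}\rho(z)\in L^1(B_1)$, uniformly in $\xi$. Dominated convergence then gives \eqref{4.24}, and since the integrand is continuous in $\xi$, the same domination shows that $\nabla_\xi\lambda_\delta$ is continuous. So $\lambda_\delta\in C^1$.

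For \eqref{4.25}, I would Taylor expand the cosine and sine with explicit remainders. Set $\theta=\delta\xi\cdot z$ and use
\[
\Bigl|1-\cos\theta-\tfrac{\theta^2}{2}\Bigr|\le\tfrac{\theta^4}{24},\qquad |\sin\theta-\theta|\le\tfrac{|\theta|^3}{6}.
\]
The quadratic term is handled by the moment condition \eqref{K2}:
\[
\delta^{-2}\int_{B_1}\rho(z)\frac{(\delta\xi\cdot z)^2}{2}\,\mathrm dz=\sum_{i,j}\xi_i\xi_j\,\frac12\int_{B_1}\rho z_iz_j\,\mathrm dz=|\xi|^2 .
\]
Similarly, the linear term in \eqref{4.24} is
\[
\delta^{-1}\int_{B_1}\rho(z)z(\delta\xi\cdot z)\,\mathrm dz=2\xi .
\]
For the remainders, Cauchy--Schwarz gives $|\theta|\le\delta|\xi||z|$. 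The first remainder is then bounded by $\delta^{-2}(\delta|\xi|)^4 m_4(\rho)/24=\frac{m_4(\rho)}{24}|\xi|^2(\delta|\xi|)^2$. The second is bounded by $\delta^{-1}\int\rho|z|\,|\theta|^3/6\le \frac{m_4(\rho)}{6}|\xi|(\delta|\xi|)^2$. This gives both lines of \eqref{4.25}; note that $m_4(\rho)\le m_0(\rho)$ is finite because $|z|\le1$ on the support.

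For \eqref{4.26}, the plan is to use the Riemann--Lebesgue lemma, which is the only genuinely nontrivial step. Write
\[
\lambda_\delta(\xi)=\delta^{-2}m_0(\rho)-\delta^{-2}\operatorname{Re}\int_{B_1}\rho(z)e^{\mathrm i\delta\xi\cdot z}\,\mathrm dz .
\]
Since $\rho\in L^1(\mathbb R^d)$ (extended by zero), its Fourier transform $\widehat\rho(-\delta\xi)$ tends to $0$ as $|\xi|\to\infty$ with $\delta>0$ fixed. This yields the first limit.

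For the gradient, each component of \eqref{4.24} is $\delta^{-1}\operatorname{Im}\widehat{(z_i\rho)}(-\delta\xi)$ up to sign. Because $|z_i\rho|\le\rho$ on $B_1$, we have $z_i\rho\in L^1$, and Riemann--Lebesgue again gives $\nabla_\xi\lambda_\delta(\xi)\to0$. The main point to check is that only integrability is needed; no smoothness of $\rho$ is required, and this is exactly what Riemann--Lebesgue provides.
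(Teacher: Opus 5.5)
Your proposal is correct and follows essentially the same route as the paper: differentiation under the integral sign, Taylor remainders for $\cos$ and $\sin$ combined with the normalization \eqref{K2} and the fourth moment $m_4(\rho)$, and the Riemann--Lebesgue lemma applied to the zero extensions of $\rho$ and $z_j\rho$. You also make explicit the remainder constants and the domination argument for continuity of $\nabla_\xi\lambda_\delta$, which the paper's proof leaves implicit.
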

\begin{proof}
	The evenness and nonnegativity follow from \eqref{4.23}, and differentiation under the integral sign gives \eqref{4.24}. Taylor's formula, \eqref{K2}, and the definition \eqref{m4rho} give both estimates in \eqref{4.25}. Finally, rewriting \eqref{4.23} as $\lambda_\delta(\xi)=\delta^{-2}m_0(\rho)-\delta^{-2}\int_{B_1}\rho(z)\cos(\delta\xi\cdot z)\,\mathrm dz$ and applying the Riemann--Lebesgue lemma to the zero extensions of $\rho$ and $z_j\rho$, $1\le j\le d$, give \eqref{4.26}.
\end{proof}

The following corollary gives the corresponding dispersion relation and group velocity for the nonlocal NLS equation.

\begin{corollary}[Dispersion relation and group velocity]\label{corollary4.10}
	Let $A\in\mathbb C$ and $k\in\mathbb Z^d$. For the initial value $u_{\delta,0}(x)=A e^{\mathrm i k\cdot x}$, the solution of \eqref{2.5} is
	\begin{equation}
		u_\delta(x,t)=A\exp\!\left(\mathrm i\bigl(k\cdot x-\omega_\delta(k;A)t\bigr)\right),
	\end{equation}
	where
	\begin{equation}\label{4.27}
		\omega_\delta(k;A)=\lambda_\delta(k)+f(|A|^2).
	\end{equation}
	Moreover, we extend the nonlocal and local dispersion relations to $\xi\in\mathbb R^d$ by
	\begin{equation}
		\omega_\delta(\xi;A)=\lambda_\delta(\xi)+f(|A|^2),
		\quad
		\omega_0(\xi;A)=|\xi|^2+f(|A|^2),
	\end{equation}
	and the corresponding group velocities are defined by
	\begin{equation}
		v_{g,\delta}(\xi;A)=\nabla_\xi\omega_\delta(\xi;A)=\nabla_\xi\lambda_\delta(\xi),
		\quad
		v_{g,0}(\xi;A)=2\xi.
	\end{equation}
	Then, for every $\xi\in\mathbb R^d$,
	\begin{equation}\label{4.28}
		\begin{aligned}
			\bigl|\omega_\delta(\xi;A)-\omega_0(\xi;A)\bigr|&\le\frac{m_4(\rho)}{24}\delta^2|\xi|^4,\\\bigl|v_{g,\delta}(\xi;A)-v_{g,0}(\xi;A)\bigr|
			&\le\frac{m_4(\rho)}{6}\delta^2|\xi|^3.
		\end{aligned}
	\end{equation}
	And for fixed $\delta>0$,
	\begin{equation}\label{4.29}
		\omega_\delta(\xi;A)\longrightarrow\delta^{-2}m_0(\rho)+f(|A|^2),
		\quad
		v_{g,\delta}(\xi;A)\longrightarrow0,
		\quad |\xi|\to\infty.
	\end{equation}
\end{corollary}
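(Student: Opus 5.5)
The plan has two parts: verify the plane-wave formula by direct substitution plus uniqueness, and then reduce the dispersion and group-velocity estimates to Proposition \ref{proposition4.9}.

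\emph{Plane-wave formula.} Set $v(x,t)=A\exp(\mathrm i(k\cdot x-\omega t))$ with $\omega=\lambda_\delta(k)+f(|A|^2)$. Then $|v|^2=|A|^2$ is constant in $x$ and $t$, so $f(|v|^2)v=f(|A|^2)v$. By \eqref{2.3}, $-\mathcal L_\delta v=\lambda_\delta(k)v$. Since also $\mathrm i\partial_tv=\omega v$, the equation \eqref{2.5} reduces to the identity $\omega=\lambda_\delta(k)+f(|A|^2)$, and $v(\cdot,0)=Ae^{\mathrm ik\cdot x}$. The function $v$ is smooth, so it lies in $C^1([0,\infty);H^q)$ for every $q$. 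Fix an integer $q>d/2$ with $q\le r$; if $r$ is too small one can instead argue uniqueness directly in the bounded ODE setting. Uniqueness in Theorem \ref{theorem4.3} then identifies $v$ with $u_\delta$. For negative $k$ or $A=0$ nothing changes.

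\emph{Group velocities and the estimates.} The term $f(|A|^2)$ does not depend on $\xi$, so
\[
\nabla_\xi\omega_\delta=\nabla_\xi\lambda_\delta,\qquad \nabla_\xi\omega_0=2\xi .
\]
Differentiability of $\lambda_\delta$ comes from Proposition \ref{proposition4.9}, with the gradient given by \eqref{4.24}. Next,
\[
\omega_\delta-\omega_0=\lambda_\delta(\xi)-|\xi|^2 ,
\]
so the first line of \eqref{4.25} gives the first bound in \eqref{4.28} after writing $|\xi|^2(\delta|\xi|)^2=\delta^2|\xi|^4$. Likewise,
\[
v_{g,\delta}-v_{g,0}=\nabla_\xi\lambda_\delta-2\xi ,
\]
and the second line of \eqref{4.25} gives the second bound in \eqref{4.28}. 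Finally, \eqref{4.29} follows from \eqref{4.26} by adding the constant $f(|A|^2)$ to the limit of $\lambda_\delta$.

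\emph{Main obstacle.} The only step that needs care is the uniqueness claim, because Theorem \ref{theorem4.3} requires $q>d/2$ with $q\le r$. If no such integer $q$ is available, I would use that $\mathcal L_\delta$ is bounded for fixed $\delta$ and that $\mathcal N$ is locally Lipschitz on $L^\infty$-bounded sets. Two bounded solutions then satisfy a Gronwall inequality in $L^2$, which gives uniqueness directly. Everything else is direct substitution together with Proposition \ref{proposition4.9}.
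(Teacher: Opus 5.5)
Your proposal is correct and follows essentially the same route as the paper: verify the plane wave by substitution using $|u_\delta|=|A|$ and \eqref{2.3}, invoke uniqueness from Theorem~\ref{theorem4.3}, and read off \eqref{4.28}--\eqref{4.29} directly from \eqref{4.25}--\eqref{4.26} in Proposition~\ref{proposition4.9}. Your $L^2$-Gronwall fallback for the case where no integer $q$ with $d/2<q\le r$ exists is a valid refinement; it uses that $(\mathcal L_\delta w,w)$ is real and that $\mathcal N$ is Lipschitz on $L^\infty$-bounded sets, and the paper does not address that case.
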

\begin{proof}
	Since $|u_\delta|=|A|$ and \eqref{2.3} gives $-\mathcal L_\delta e_k=\lambda_\delta(k)e_k$, substitution into \eqref{2.5} gives \eqref{4.27}; uniqueness follows from Theorem \ref{theorem4.3}. The identities defining $\omega_\delta$, $\omega_0$, $v_{g,\delta}$, and $v_{g,0}$ show that \eqref{4.28} follows from \eqref{4.25} in Proposition \ref{proposition4.9}, while \eqref{4.29} follows from \eqref{4.26} in Proposition \ref{proposition4.9}.
\end{proof}
\begin{remark}
	For every fixed $R>0$, \eqref{4.28} implies uniform convergence of the dispersion relation and group velocity on $\{|\xi|\le R\}$ as $\delta\to0$. This convergence is not uniform on $\mathbb R^d$. Indeed, for fixed $\delta$, \eqref{4.29} shows that the nonlocal frequency approaches a finite limit and the nonlocal group velocity tends to zero as $|\xi|\to\infty$, whereas their local counterparts grow without bound. These two regimes will be examined numerically for different horizons and kernel profiles.
\end{remark}
\section{Temporal discretization analysis}\label{sec5}
In this section, we analyze the time-discrete scheme \eqref{3.6}. Throughout the section, $s$ and $r$ satisfy the assumptions of Corollary \ref{corollary4.7}, and $0<\delta\le\delta_0$. Unless stated otherwise, all constants are independent of $\delta$ and $\tau$. We first establish the one-step solvability of the time-discrete scheme, then we prove its conservation laws and derive the uniform temporal error and regularity estimates required in the fully discrete analysis.
\subsection{One-step solvability}
The following lemma establishes the local Lipschitz continuity of $\mathscr G$ defined in \eqref{3.5} and its second-order consistency with the continuous nonlinearity $\mathcal N(v)=f(|v|^2)v$ defined in \eqref{4.5}.
\begin{lemma}\label{lemma5.1}
	For every $R>0$, there exists $C_R>0$ such that
	\begin{equation}\label{5.1}
		\begin{aligned}
			&\|\mathscr G(v_1,w_1)-\mathscr G(v_2,w_2)\|_{H^s}\\
			&\qquad\le C_R\bigl(\|v_1-v_2\|_{H^s}+\|w_1-w_2\|_{H^s}\bigr)
		\end{aligned}
	\end{equation}
	whenever
	\begin{equation}
		\|v_1\|_{H^s}+\|w_1\|_{H^s}+\|v_2\|_{H^s}+\|w_2\|_{H^s}\le R.
	\end{equation}
	Moreover,
	\begin{equation}\label{5.2}
		\left\|\mathscr G(v,w)-\mathcal N\!\left(\frac{v+w}{2}\right)\right\|_{H^s}\le C_R\|v-w\|_{H^s}^2
	\end{equation}
	whenever $\|v\|_{H^s}+\|w\|_{H^s}\le R$.
\end{lemma}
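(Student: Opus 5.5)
Both estimates will be reduced to the Sobolev--Moser bounds of Lemma \ref{lemmaA}, applied with $q=s$. This is admissible because $s>d/2$ and $s\le r$; by \eqref{N1} we have $f\in C^{r+3}_{\mathrm{loc}}\subset C^{s+3}_{\mathrm{loc}}$. Identify $\mathbb C$ with $\mathbb R^2$ and write $v=(v_1,v_2)$, $w=(w_1,w_2)$.

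\textbf{Step 1: \eqref{5.1}.} Define $\Psi:\mathbb R^4\to\mathbb R^2$ by
\begin{equation*}
	\Psi(v,w)=\Bigl(\int_0^1 f\bigl((1-\theta)|w|^2+\theta|v|^2\bigr)\,\mathrm d\theta\Bigr)\frac{v+w}{2},
\end{equation*}
so that $\mathscr G(v,w)=\Psi\circ(v,w)$ by \eqref{3.3}. Differentiation under the integral sign shows that $\Psi\in C^{s+3}_{\mathrm{loc}}(\mathbb R^4;\mathbb R^2)$, since its argument is a polynomial in $(v,w)$ and $\theta$ ranges over the compact interval $[0,1]$. Then \eqref{A1} with $m=4$ gives \eqref{5.1} directly. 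Here we use that $\|(v,w)\|_{H^s}$ is equivalent to $\|v\|_{H^s}+\|w\|_{H^s}$.

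\textbf{Step 2: reparametrize for \eqref{5.2}.} Set $\bar u=(v+w)/2$ and $e=(v-w)/2$, so that $v=\bar u+e$ and $w=\bar u-e$. Then
\begin{equation*}
	|v|^2=|\bar u|^2+2\,\mathrm{Re}(\bar u\bar e)+|e|^2,\qquad |w|^2=|\bar u|^2-2\,\mathrm{Re}(\bar u\bar e)+|e|^2,
\end{equation*}
and the argument of $f$ in \eqref{3.3} becomes
\begin{equation*}
	|\bar u|^2+(2\theta-1)\,2\,\mathrm{Re}(\bar u\bar e)+|e|^2 .
\end{equation*}
Define the smooth map $H(\bar u,e)=\int_0^1 f\bigl(|\bar u|^2+(2\theta-1)2\mathrm{Re}(\bar u\bar e)+|e|^2\bigr)\,\mathrm d\theta\;\bar u$. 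Then $\mathscr G(v,w)-\mathcal N(\bar u)=H(\bar u,e)-H(\bar u,0)$.

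\textbf{Step 3: Taylor expansion in $e$.} The key observation is that $\partial_eH(\bar u,0)$ vanishes. Indeed, the first-order term equals
\begin{equation*}
	\int_0^1 f'(|\bar u|^2)(2\theta-1)\,\mathrm d\theta\cdot 2\mathrm{Re}(\bar u\bar e)\,\bar u=0,
\end{equation*}
because $\int_0^1(2\theta-1)\,\mathrm d\theta=0$. Taylor's formula with integral remainder, applied to the composition map in the Banach space $H^s$, then gives
\begin{equation*}
	H(\bar u,e)-H(\bar u,0)=\int_0^1(1-\sigma)\,D_e^2H(\bar u,\sigma e)[e,e]\,\mathrm d\sigma .
\end{equation*}
This step uses the $C^2$ Fréchet differentiability from Lemma \ref{lemmaA}, applied to $\Phi=H\in C^{s+3}_{\mathrm{loc}}(\mathbb R^4;\mathbb R^2)$. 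The second-derivative bound \eqref{A2} then controls the remainder by $C_R\|e\|_{H^s}^2\le C_R\|v-w\|_{H^s}^2$. We need $\|(\bar u,\sigma e)\|_{H^s}\le R$, which holds uniformly in $\sigma\in[0,1]$.

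\textbf{Main obstacle.} The delicate point is the cancellation of the first-order term, together with justifying that Fréchet differentiation of the composition map commutes with the pointwise derivative. I would make the cancellation explicit via the symmetric parametrization of Step 2. A simpler alternative is to note that $H(\bar u,e)=H(\bar u,-e)$, because the substitution $\theta\mapsto1-\theta$ maps one integrand to the other. Hence $e\mapsto H(\bar u,e)$ is even, and the linear term vanishes automatically. I would use this evenness argument as the cleanest route.
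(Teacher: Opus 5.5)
Your proposal is correct and is essentially the paper's proof: \eqref{5.1} follows from the Sobolev--Moser bounds of Lemma~\ref{lemmaA} applied to the pointwise map $\Psi$ (you invoke \eqref{A1} directly, where the paper uses the mean-value formula with bounded derivatives), and \eqref{5.2} follows from a Taylor expansion in the half-difference $(v-w)/2$, where the first-order term vanishes by the evenness coming from $G(a,b)=G(b,a)$ (your $\theta\mapsto1-\theta$ substitution) and the remainder is controlled by \eqref{A2}. The only blemish is notational: writing $v=(v_1,v_2)$ for real and imaginary parts clashes with the $v_1,v_2$ that appear in the statement.
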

\begin{proof}
	For $z,\zeta\in\mathbb C$, set
	\begin{equation}
		\Psi(z,\zeta)=G(|z|^2,|\zeta|^2)\frac{z+\zeta}{2}
		=\left[\int_0^1 f\bigl((1-\theta)|\zeta|^2+\theta|z|^2\bigr)\,\mathrm d\theta\right]\frac{z+\zeta}{2},
	\end{equation}
	so that $\mathscr G(v,w)(x)=\Psi(v(x),w(x))$. Assumption \eqref{N1} and Lemma \ref{lemmaA} show that the mapping $(v,w)\mapsto\mathscr G(v,w)$ is twice continuously Fr\'{e}chet differentiable on bounded subsets of $H^s\times H^s$, while \eqref{A2} gives bounded first and second derivatives there. The mean-value formula therefore gives \eqref{5.1}.
	
	For \eqref{5.2}, set $v_{\mathrm{av}}=(v+w)/2$, $v_{\mathrm{dif}}=(v-w)/2$, and $\phi(\theta)=\mathscr G(v_{\mathrm{av}}+\theta v_{\mathrm{dif}},v_{\mathrm{av}}-\theta v_{\mathrm{dif}})$. Since $G(a,b)=G(b,a)$, $\phi$ is even and $\phi'(0)=0$. Estimate \eqref{A2} in Lemma \ref{lemmaA} gives $\|\phi''(\theta)\|_{H^s}\le C_R\|v-w\|_{H^s}^2$. Taylor's formula gives \eqref{5.2}, because $\phi(1)=\mathscr G(v,w)$ and $\phi(0)=\mathcal N(v_{\mathrm{av}})$.
\end{proof}

The following lemma provides the linear estimates needed for one-step solvability.

\begin{lemma}\label{lemmaB}
	Let $m\in\mathbb R$, $\delta>0$, and $\tau>0$, and define
	\begin{equation}
		\mathcal S_{\delta,\tau}
		=\left(\frac{\mathrm i}{\tau}I+\frac12\mathcal L_\delta\right)^{-1},
		\quad
		\mathcal C_{\delta,\tau}
		=\mathcal S_{\delta,\tau}\left(\frac{\mathrm i}{\tau}I-\frac12\mathcal L_\delta\right).
	\end{equation}
	Then for every $v\in H^m(\mathbb T^d)$, we have
	\begin{equation}\label{5.4}
		\|\mathcal S_{\delta,\tau}v\|_{H^m}\le\tau\|v\|_{H^m},
	\end{equation}
	and
	\begin{equation}\label{5.5}
		\|\mathcal C_{\delta,\tau}v\|_{H^m}=\|v\|_{H^m}.
	\end{equation}
\end{lemma}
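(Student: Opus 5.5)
The proof works entirely in Fourier space. By Lemma \ref{lemma4.1} and \eqref{2.3}, $\mathcal L_\delta$ is the Fourier multiplier with real symbol $-\lambda_\delta(k)$, so $\frac{\mathrm i}{\tau}I\pm\frac12\mathcal L_\delta$ acts diagonally on $e_k$ with eigenvalue $\frac{\mathrm i}{\tau}\mp\frac12\lambda_\delta(k)$. The point to check first is invertibility, i.e.\ that $\mathcal S_{\delta,\tau}$ is well defined. For every $k\in\mathbb Z^d$,
\[
\Bigl|\frac{\mathrm i}{\tau}-\frac12\lambda_\delta(k)\Bigr|^2=\frac1{\tau^2}+\frac14\lambda_\delta(k)^2\ge\frac1{\tau^2}>0,
\]
because $\lambda_\delta(k)$ is real. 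Hence $\mathcal S_{\delta,\tau}$ is the multiplier with symbol $s_k=\bigl(\frac{\mathrm i}{\tau}-\frac12\lambda_\delta(k)\bigr)^{-1}$, and $|s_k|\le\tau$ uniformly in $k$ and $\delta$. Since $\lambda_\delta(k)\le 2\delta^{-2}\|\rho\|_{L^1}$ by \eqref{4.1}, the operator $\frac{\mathrm i}{\tau}I+\frac12\mathcal L_\delta$ is bounded on $H^m$, and this multiplier is a genuine two-sided bounded inverse on $H^m$. We do not even need the upper bound for that, since the inverse symbol is bounded regardless.

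For \eqref{5.4}, use the norm \eqref{Hnorm}:
\[
\|\mathcal S_{\delta,\tau}v\|_{H^m}^2=\sum_k(1+|k|^2)^m|s_k|^2|\widehat v_k|^2\le\tau^2\|v\|_{H^m}^2.
\]
For \eqref{5.5}, the symbol of $\mathcal C_{\delta,\tau}$ is
\[
c_k=\frac{\frac{\mathrm i}{\tau}+\frac12\lambda_\delta(k)}{\frac{\mathrm i}{\tau}-\frac12\lambda_\delta(k)}.
\]
Its numerator is $-\overline{\bigl(\frac{\mathrm i}{\tau}-\frac12\lambda_\delta(k)\bigr)}$, so $|c_k|=1$ exactly, which is the Cayley-transform structure. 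Parseval in the weighted norm \eqref{Hnorm} then gives the isometry.

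There is no real obstacle. The only care needed is to note that the argument relies solely on the reality of $\lambda_\delta(k)$ (self-adjointness), not its sign, and that all bounds are independent of $\delta$ and $m$.
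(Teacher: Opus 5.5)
Your proposal is correct and is essentially the paper's proof. Both compute the Fourier symbols $\bigl(\frac{\mathrm i}{\tau}-\frac12\lambda_\delta(k)\bigr)^{-1}$ and $\frac{\mathrm i/\tau+\lambda_\delta(k)/2}{\mathrm i/\tau-\lambda_\delta(k)/2}$, use the reality of $\lambda_\delta(k)$ to bound their moduli by $\tau$ and $1$, and conclude via the norm \eqref{Hnorm}; your explicit invertibility check is a small addition that the paper leaves implicit.
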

\begin{proof}
	The Fourier symbols of $\mathcal S_{\delta,\tau}$ and $\mathcal C_{\delta,\tau}$ are
	\begin{equation}
		\left(\frac{\mathrm i}{\tau}-\frac12\lambda_\delta(k)\right)^{-1},
		\quad
		\frac{\mathrm i/\tau+\lambda_\delta(k)/2}{\mathrm i/\tau-\lambda_\delta(k)/2}.
	\end{equation}
	Since $\lambda_\delta(k)$ is real, their moduli are bounded by $\tau$ and equal to $1$, respectively. Using \eqref{Hnorm} with $q=m$ gives \eqref{5.4} and \eqref{5.5}.
\end{proof}

Let $n\ge0$ and suppose that $U_\delta^n$ is given. We next show that the time-discrete scheme \eqref{3.6} uniquely determines $U_\delta^{n+1}$ for sufficiently small $\tau$.

\begin{proposition}[One-step solvability] \label{proposition5.2}
	For every $R>0$, there exists $\tau_R>0$, independent of $0<\delta\le\delta_0$, such that for $\|U_\delta^n\|_{H^s}\le R$ and $0<\tau\le\tau_R$, the time-discrete scheme \eqref{3.6} has a unique solution
	$U_\delta^{n+1}$ in the closed ball
	\begin{equation}
		B_{R+1}=\{W\in H^s(\mathbb T^d):\|W\|_{H^s}\le R+1\}.
	\end{equation}
\end{proposition}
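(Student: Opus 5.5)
We rewrite \eqref{3.6} as a fixed-point problem and apply the Banach contraction principle on $B_{R+1}$. Multiplying \eqref{3.6} by $1$ and moving the linear terms in $U_\delta^{n+1}$ to the left gives
\begin{equation*}
	\left(\frac{\mathrm i}{\tau}I+\frac12\mathcal L_\delta\right)U_\delta^{n+1}=\left(\frac{\mathrm i}{\tau}I-\frac12\mathcal L_\delta\right)U_\delta^n+\mathscr G(U_\delta^{n+1},U_\delta^n).
\end{equation*}
With the operators of Lemma \ref{lemmaB}, this is equivalent to $U_\delta^{n+1}=\mathcal T(U_\delta^{n+1})$, where
\begin{equation*}
	\mathcal T(W)=\mathcal C_{\delta,\tau}U_\delta^n+\mathcal S_{\delta,\tau}\mathscr G(W,U_\delta^n).
\end{equation*}
This reformulation is valid for every $\delta>0$ because the symbol $\mathrm i/\tau-\lambda_\delta(k)/2$ never vanishes. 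The key point is that the bounds \eqref{5.4} and \eqref{5.5} contain no $\delta$. Consequently, the resulting step-size restriction will not depend on $\delta$, even though $\|\mathcal L_\delta\|$ blows up like $\delta^{-2}$.

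First, we show that $\mathcal T$ maps $B_{R+1}$ into itself. Take $W\in B_{R+1}$. By \eqref{5.5}, $\|\mathcal C_{\delta,\tau}U_\delta^n\|_{H^s}=\|U_\delta^n\|_{H^s}\le R$. Since $\Psi(0,0)=0$, applying \eqref{5.1} with $(v_2,w_2)=(0,0)$ and $R'=2R+1$ gives $\|\mathscr G(W,U_\delta^n)\|_{H^s}\le C_{R'}(2R+1)$. Then \eqref{5.4} yields
\begin{equation*}
	\|\mathcal T(W)\|_{H^s}\le R+\tau C_{R'}(2R+1),
\end{equation*}
which is at most $R+1$ once $\tau\le 1/\bigl(C_{R'}(2R+1)\bigr)$.

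Second, we show that $\mathcal T$ is a contraction. For $W_1,W_2\in B_{R+1}$, \eqref{5.4} and \eqref{5.1} with radius $2(2R+1)$ give
\begin{equation*}
	\|\mathcal T(W_1)-\mathcal T(W_2)\|_{H^s}\le\tau C_{2(2R+1)}\|W_1-W_2\|_{H^s}.
\end{equation*}
Choosing $\tau_R$ as the minimum of the two thresholds, with $\tau C_{2(2R+1)}\le 1/2$, makes $\mathcal T$ a contraction with constant $1/2$ on the closed, hence complete, set $B_{R+1}$. Banach's fixed-point theorem then gives existence and uniqueness in $B_{R+1}$. Uniqueness here is understood within $B_{R+1}$, as the statement asserts.

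There is no real obstacle beyond this. The only delicate issue is $\delta$-uniformity. This rests on using the resolvent form with the unitary Cayley operator $\mathcal C_{\delta,\tau}$, rather than estimating $\tau\mathcal L_\delta U^{n+1/2}$ directly. It also rests on the fact that $C_R$ in Lemma \ref{lemma5.1} depends only on $f$, $s$ and $R$, not on $\delta$.
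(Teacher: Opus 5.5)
Your proof is correct and follows essentially the same route as the paper. Both rewrite \eqref{3.6} as the fixed-point equation $W=\mathcal C_{\delta,\tau}U_\delta^n+\mathcal S_{\delta,\tau}\mathscr G(W,U_\delta^n)$ and apply the contraction mapping theorem on $B_{R+1}$, using the $\delta$-free bounds \eqref{5.4}--\eqref{5.5}, the Lipschitz estimate \eqref{5.1}, and $\mathscr G(0,0)=0$; your explicit radii $2R+1$ and $2(2R+1)$ in \eqref{5.1} just make precise the constants $K_R$ and $C_R$ that the paper leaves implicit.
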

\begin{proof}
	Rearranging \eqref{3.6} gives the fixed-point equation
	\begin{equation}\label{5.6}
		W=\mathcal T_n(W):=\mathcal C_{\delta,\tau}U_\delta^n+\mathcal S_{\delta,\tau}\mathscr G(W,U_\delta^n).
	\end{equation}
	For $W,Z\in B_{R+1}$, estimate \eqref{5.1} in Lemma \ref{lemma5.1}, $\mathscr G(0,0)=0$, and \eqref{5.4}--\eqref{5.5} in Lemma \ref{lemmaB} give
	\begin{equation}
		\|\mathcal T_n(W)\|_{H^s}\le R+\tau K_R,
		\qquad
		\|\mathcal T_n(W)-\mathcal T_n(Z)\|_{H^s}
		\le\tau C_R\|W-Z\|_{H^s},
	\end{equation}
	where $K_R$ and $C_R$ are independent of $\delta$. Choose $\tau_R>0$ so that $\tau_RK_R\le1$ and $\tau_RC_R<1$. Then $\mathcal T_n$ is a contraction from $B_{R+1}$ into itself, and its unique fixed point is the required solution.
\end{proof}

\subsection{Conservation, consistency, and convergence}
The first result shows that the time discretization preserves the mass and the nonlocal energy defined in section \ref{sec2}.
\begin{theorem}[Conservation laws of the time-discrete scheme]\label{theorem5.3}
	Let $\{U_\delta^n\}_{n=0}^{N_T}\subset H^s(\mathbb T^d)$ satisfy \eqref{3.6}. Then, for $0\le n\le N_T-1$,
	\begin{equation}\label{5.7}
		M(U_\delta^{n+1})=M(U_\delta^n),
	\end{equation}
	and
	\begin{equation}\label{5.8}
		E_\delta(U_\delta^{n+1})=E_\delta(U_\delta^n).
	\end{equation}
\end{theorem}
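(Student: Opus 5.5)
The plan is to test the scheme \eqref{3.6} against two functions: against $U_\delta^{n+\frac12}$ for mass, and against $D_\tau U_\delta^n$ for energy. This mimics the proof of Theorem \ref{theorem4.4}, with the discrete chain rule supplied by the difference quotient $G$ in \eqref{3.2}. All terms are well defined: $U_\delta^n\in H^s\subset L^\infty$ since $s>d/2$, $\mathcal L_\delta$ is bounded on $L^2$ by \eqref{4.3}, and $\mathscr G(U_\delta^{n+1},U_\delta^n)\in H^s$ by Lemma \ref{lemma5.1}.

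\emph{Mass.} Take the $L^2$ inner product of \eqref{3.6} with $U_\delta^{n+\frac12}$ and consider imaginary parts. On the left,
\[
\operatorname{Re}(D_\tau U_\delta^n,U_\delta^{n+\frac12})=\frac{1}{2\tau}\bigl(\|U_\delta^{n+1}\|_{L^2}^2-\|U_\delta^n\|_{L^2}^2\bigr),
\]
so the imaginary part of $\mathrm i(D_\tau U_\delta^n,U_\delta^{n+\frac12})$ equals this quantity. On the right, $(-\mathcal L_\delta U_\delta^{n+\frac12},U_\delta^{n+\frac12})=a_\delta(U_\delta^{n+\frac12},U_\delta^{n+\frac12})$ is real by \eqref{integralbyparts}. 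Also
\[
(\mathscr G(U_\delta^{n+1},U_\delta^n),U_\delta^{n+\frac12})=\frac1{(2\pi)^d}\int G(|U_\delta^{n+1}|^2,|U_\delta^n|^2)\,|U_\delta^{n+\frac12}|^2\,\mathrm dx
\]
is real because $G$ is real-valued. Hence the right-hand side has zero imaginary part, and \eqref{5.7} follows.

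\emph{Energy.} Take the inner product with $D_\tau U_\delta^n=(U_\delta^{n+1}-U_\delta^n)/\tau$ and consider real parts. The left side $\mathrm i\|D_\tau U_\delta^n\|_{L^2}^2$ is purely imaginary, so its real part vanishes. For the linear term, by \eqref{integralbyparts} and the Hermitian symmetry of $a_\delta$,
\[
\operatorname{Re}\,a_\delta(U_\delta^{n+\frac12},D_\tau U_\delta^n)=\frac{1}{2\tau}\bigl(a_\delta(U_\delta^{n+1},U_\delta^{n+1})-a_\delta(U_\delta^n,U_\delta^n)\bigr),
\]
since the cross terms cancel. For the nonlinear term, pointwise
\[
\operatorname{Re}\Bigl[\frac{U^{n+1}+U^n}{2}\,\overline{(U^{n+1}-U^n)}\Bigr]=\frac12\bigl(|U^{n+1}|^2-|U^n|^2\bigr),
\]
and by \eqref{3.2} this gives $G(a,b)\,(a-b)/2=(F(a)-F(b))/2$ with $a=|U^{n+1}|^2$ and $b=|U^n|^2$. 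When $a=b$ both sides vanish, so the case distinction in \eqref{3.2} is harmless. Integrating, the nonlinear contribution equals $\frac{1}{2\tau(2\pi)^d}\int\bigl(F(|U_\delta^{n+1}|^2)-F(|U_\delta^n|^2)\bigr)\,\mathrm dx$. Summing the two contributions gives $\frac1\tau\bigl(E_\delta(U_\delta^{n+1})-E_\delta(U_\delta^n)\bigr)=0$, which is \eqref{5.8} after multiplying by $\tau$.

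There is no real obstacle here. The only point needing care is the pointwise discrete chain rule for the nonlinearity, in particular the degenerate case $a=b$, which is handled as noted above.
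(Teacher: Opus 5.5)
Your proposal is correct and takes the same route as the paper. For mass you test \eqref{3.6} with $U_\delta^{n+\frac12}$ and take imaginary parts; for energy you test with $D_\tau U_\delta^n$ and take real parts, using \eqref{integralbyparts} and the difference quotient \eqref{3.2}. You also spell out steps the paper leaves implicit: the purely imaginary cross terms, and the pointwise identity $G(a,b)(a-b)=F(a)-F(b)$, including the degenerate case $a=b$.
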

\begin{proof}
	Taking the $L^2$ inner product of \eqref{3.6} with $U_\delta^{n+1/2}$, taking imaginary parts, and using \eqref{integralbyparts} and \eqref{3.2} give \eqref{5.7}. Taking the $L^2$ inner product of \eqref{3.6} with $D_\tau U_\delta^n$, taking real parts, and again using \eqref{integralbyparts} and \eqref{3.2} give \eqref{5.8}.
\end{proof}

The following two lemmas give the midpoint energy estimate and the discrete Gr{\"o}nwall argument used in Sections \ref{sec5} and \ref{sec6}.

\begin{lemma}[Midpoint energy estimate]\label{lemmaC}
	Let $\delta>0$ and $\{e^n\}\subset H^s(\mathbb T^d)$, and let $Q^{n+1/2}\in H^s(\mathbb T^d)$. Set $D_\tau e^n=(e^{n+1}-e^n)/\tau$ and $e^{n+1/2}=(e^{n+1}+e^n)/2$, and further suppose
	\begin{equation}\label{lemmacerror}
		\mathrm iD_\tau e^n=-\mathcal L_\delta e^{n+1/2}+Q^{n+1/2}.
	\end{equation}
	If it holds that
	\begin{equation}\label{assumedboundlemmac}
		\|Q^{n+1/2}\|_{H^s}\le L\bigl(\|e^{n+1}\|_{H^s}+\|e^n\|_{H^s}\bigr)+\varepsilon_n,
	\end{equation}
	then we have
	\begin{equation}
		\|e^{n+1}\|_{H^s}^2-\|e^n\|_{H^s}^2\le C_L\tau\bigl(\|e^{n+1}\|_{H^s}^2+\|e^n\|_{H^s}^2\bigr)+C\tau\varepsilon_n^2.
	\end{equation}
\end{lemma}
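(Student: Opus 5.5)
Our plan is the standard Crank--Nicolson energy argument in $H^s$, in which the nonlocal term is removed by its $H^s$-self-adjointness. First we take the $H^s$ inner product of \eqref{lemmacerror} with $e^{n+1/2}$ and multiply by $\tau$. This gives
\begin{equation*}
\mathrm i\bigl(e^{n+1}-e^n,e^{n+1/2}\bigr)_{H^s}=-\tau\bigl(\mathcal L_\delta e^{n+1/2},e^{n+1/2}\bigr)_{H^s}+\tau\bigl(Q^{n+1/2},e^{n+1/2}\bigr)_{H^s}.
\end{equation*}
Next we take imaginary parts, which amounts to taking real parts after multiplying by $-\mathrm i$. The key algebraic identity is
\begin{equation*}
\operatorname{Re}\bigl(e^{n+1}-e^n,e^{n+1}+e^n\bigr)_{H^s}=\|e^{n+1}\|_{H^s}^2-\|e^n\|_{H^s}^2,
\end{equation*}
because the cross terms $(e^{n+1},e^n)_{H^s}-(e^n,e^{n+1})_{H^s}$ are purely imaginary. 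Hence the left side gives exactly $\tfrac12(\|e^{n+1}\|_{H^s}^2-\|e^n\|_{H^s}^2)$.

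On the right-hand side, \eqref{4.4} in Lemma \ref{lemma4.1} shows that $(\mathcal L_\delta v,v)_{H^s}$ is real for every $v$. This also follows directly from the Fourier representation, since the symbol $-\lambda_\delta(k)$ is real and $\Lambda^s$ commutes with $\mathcal L_\delta$. Therefore the nonlocal term drops out of the imaginary part, for every $\delta>0$ and with no constant depending on $\delta$. This is the only point where the structure of $\mathcal L_\delta$ is used, and it is why the resulting constants are uniform in the horizon.

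We are left with
\begin{equation*}
\|e^{n+1}\|_{H^s}^2-\|e^n\|_{H^s}^2\le 2\tau\|Q^{n+1/2}\|_{H^s}\|e^{n+1/2}\|_{H^s}.
\end{equation*}
We bound $\|e^{n+1/2}\|_{H^s}\le\tfrac12(\|e^{n+1}\|_{H^s}+\|e^n\|_{H^s})$ and insert \eqref{assumedboundlemmac}. The product $L(\|e^{n+1}\|+\|e^n\|)^2$ is controlled by $2L(\|e^{n+1}\|^2+\|e^n\|^2)$. For the cross term $\varepsilon_n(\|e^{n+1}\|+\|e^n\|)$ we use Young's inequality in the form $ab\le\tfrac12a^2+\tfrac12b^2$, which yields $\varepsilon_n^2+\|e^{n+1}\|^2+\|e^n\|^2$. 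Collecting terms gives the claim with $C_L=2L+1$, or a similar constant, and $C=1$.

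We do not expect a genuine obstacle here. The only delicate point is to make sure the $\mathcal L_\delta$ contribution is exactly real in the $H^s$ pairing rather than merely bounded, since a bound would cost a factor $\delta^{-2}$ and destroy uniformity. Lemma \ref{lemma4.1} guarantees this exactness.
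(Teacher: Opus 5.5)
Your proposal is correct and is essentially the paper's proof. Both test the error equation with $e^{n+1/2}$ in the $H^s$ inner product and take imaginary parts, so the nonlocal term drops out by the $H^s$-self-adjointness in Lemma~\ref{lemma4.1}. Both then close with the midpoint bound and Young's inequality; your version additionally makes the constants explicit as $C_L=2L+1$ and $C=1$.
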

\begin{proof}
	Taking the $H^s$ inner product of \eqref{lemmacerror} with $e^{n+1/2}$, taking imaginary parts, and using \eqref{4.4} in Lemma \ref{lemma4.1} give
	\begin{equation}
		\frac{\|e^{n+1}\|_{H^s}^2-\|e^n\|_{H^s}^2}{2\tau}=\operatorname{Im}(Q^{n+1/2},e^{n+1/2})_{H^s}.
	\end{equation}
	Using \eqref{assumedboundlemmac}, $\|e^{n+1/2}\|_{H^s}\le(\|e^{n+1}\|_{H^s}+\|e^n\|_{H^s})/2$, and Young's inequality gives the stated estimate.
\end{proof}

\begin{lemma}[Discrete Gr{\"o}nwall estimate]\label{lemmaD}
	Let $T,C,C_0>0$, $\tau>0$, $\varepsilon\ge0$, and $m\in\mathbb N$ satisfy $m\tau\le T$. Let $a_0,\ldots,a_m$ be nonnegative numbers such that
	\begin{equation}\label{discreteGA}
		a_{n+1}-a_n\le C\tau(a_{n+1}+a_n)+C\tau\varepsilon^2,
		\quad
		0\le n\le m-1,
	\end{equation}
	and $a_0\le C_0\varepsilon^2$. Then there exist $\tau_*>0$ and $C_T>0$, depending only on $C$, $C_0$, and $T$, such that if $0<\tau\le\tau_*$,
	\begin{equation}
		\max_{0\le j\le m}a_j^{1/2}\le C_T\varepsilon.
	\end{equation}
\end{lemma}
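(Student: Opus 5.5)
The plan is to rearrange the one-step inequality into a multiplicative recursion, iterate it, and then use the exponential bound on the product. Write \eqref{discreteGA} as
\begin{equation*}
(1-C\tau)a_{n+1}\le(1+C\tau)a_n+C\tau\varepsilon^2 .
\end{equation*}
Choose $\tau_*=1/(2C)$, so that $1-C\tau\ge\tfrac12$ whenever $0<\tau\le\tau_*$. Dividing by $1-C\tau$ gives
\begin{equation*}
a_{n+1}\le\gamma a_n+2C\tau\varepsilon^2,\qquad \gamma:=\frac{1+C\tau}{1-C\tau}.
\end{equation*}

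We bound $\gamma$ uniformly in $\tau$. Since $1-C\tau\ge\tfrac12$,
\begin{equation*}
\gamma=1+\frac{2C\tau}{1-C\tau}\le1+4C\tau\le e^{4C\tau}.
\end{equation*}

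We then iterate the recursion by induction on $j\le m$. This gives
\begin{equation*}
a_j\le\gamma^j a_0+2C\tau\varepsilon^2\sum_{i=0}^{j-1}\gamma^i\le e^{4Cj\tau}\bigl(a_0+2Cj\tau\varepsilon^2\bigr).
\end{equation*}
Here each $\gamma^i$ in the sum is bounded by $\gamma^j\le e^{4Cj\tau}$. We use $j\tau\le m\tau\le T$ and $a_0\le C_0\varepsilon^2$. This yields
\begin{equation*}
a_j\le e^{4CT}(C_0+2CT)\varepsilon^2,\qquad 0\le j\le m.
\end{equation*}
Taking square roots gives the claim with $C_T=e^{2CT}(C_0+2CT)^{1/2}$, which depends only on $C$, $C_0$ and $T$.

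The argument is elementary, and no earlier result of the paper is needed. The one step that needs care is keeping the constants independent of $\tau$ and $m$. In particular, the implicit term $C\tau a_{n+1}$ on the right of \eqref{discreteGA} must be absorbed into the left-hand side. This is what forces the smallness condition $\tau\le\tau_*$.
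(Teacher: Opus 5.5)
Your proof is correct and follows the same route as the paper. Both arguments absorb the implicit term using $C\tau\le 1/2$, reduce to a one-step recursion of the form $a_{n+1}\le(1+C_1\tau)a_n+C_1\tau\varepsilon^2$, and iterate it with an exponential bound; where the paper simply invokes the discrete Gr\"onwall inequality, you carry out that iteration with explicit constants $\tau_*=1/(2C)$ and $C_T=e^{2CT}(C_0+2CT)^{1/2}$.
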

\begin{proof}
	Choose $\tau_*>0$ so that $C\tau_*\le1/2$. For $0<\tau\le\tau_*$, \eqref{discreteGA} implies $a_{n+1}\le(1+C_1\tau)a_n+C_1\tau\varepsilon^2$, where $C_1$ depends only on $C$. The discrete Gr{\"o}nwall inequality, $a_0\le C_0\varepsilon^2$, and $m\tau\le T$ give $\max_{0\le j\le m}a_j\le C_T^2\varepsilon^2$.
\end{proof}

We next establish the consistency of the time-discrete scheme. Let
\begin{equation}
	u_\delta^n=u_\delta(t_n),
\end{equation}
and define the local residual by
\begin{equation}\label{5.9}
	\mathcal R_\tau^{n+\frac12}=\mathrm iD_\tau u_\delta^n+\mathcal L_\delta\frac{u_\delta^{n+1}+u_\delta^n}{2}-\mathscr G(u_\delta^{n+1},u_\delta^n).
\end{equation}
The following lemma establishes the second-order consistency of the time discretization, uniformly with respect to $\delta$.
\begin{lemma}[Uniform temporal consistency]\label{lemma5.4}
	Under the assumptions of Corollary \ref{corollary4.7}, we have
	\begin{equation}\label{5.10}
		\|\mathcal R_\tau^{n+\frac12}\|_{H^s}\le C\tau^2,\quad 0\le n\le N_T-1.
	\end{equation}
\end{lemma}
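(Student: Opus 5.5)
The plan is to subtract the exact equation evaluated at the midpoint $t_{n+1/2}=t_n+\tau/2$ from the residual, and to split the difference into three pieces, each controlled by Corollary \ref{corollary4.7}. Since $u_\delta$ solves \eqref{2.5}, we have $\mathrm i\partial_tu_\delta(t_{n+1/2})+\mathcal L_\delta u_\delta(t_{n+1/2})-\mathcal N(u_\delta(t_{n+1/2}))=0$. Subtracting this from \eqref{5.9} gives
\begin{equation*}
	\mathcal R_\tau^{n+\frac12}=\mathrm i\bigl(D_\tau u_\delta^n-\partial_tu_\delta(t_{n+\frac12})\bigr)+\mathcal L_\delta\Bigl(\frac{u_\delta^{n+1}+u_\delta^n}{2}-u_\delta(t_{n+\frac12})\Bigr)-\Bigl(\mathscr G(u_\delta^{n+1},u_\delta^n)-\mathcal N(u_\delta(t_{n+\frac12}))\Bigr).
\end{equation*}
We denote the three terms on the right by $I_1$, $I_2$, $I_3$.

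For $I_1$, Taylor's formula with integral remainder about $t_{n+1/2}$ cancels the even-order terms. This gives $\|I_1\|_{H^s}\le \frac{\tau^2}{24}\|\partial_t^3u_\delta\|_{L^\infty(0,T;H^s)}$, which is $O(\tau^2)$ uniformly in $\delta$ by \eqref{4.21}.

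For $I_2$, the same expansion shows that the trapezoidal average minus the midpoint value equals $\frac{\tau^2}{8}$ times an average of $\partial_t^2u_\delta$. Hence $\|I_2\|_{H^s}\le C\tau^2\|\mathcal L_\delta\partial_t^2u_\delta\|_{L^\infty(0,T;H^s)}$. This is the step that needs care: one must not use the $\delta$-dependent bound \eqref{4.3}. Instead we use the uniform bound \eqref{4.2}, which costs two derivatives, so the term is controlled by $\|\partial_t^2u_\delta\|_{L^\infty(0,T;H^{s+2})}$. That quantity is exactly what \eqref{4.21} supplies, and this is why Corollary \ref{corollary4.7} was stated with the shifted indices. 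We apply $\mathcal L_\delta$ inside the time integral, which is legitimate since $\mathcal L_\delta$ is bounded and linear for fixed $\delta$.

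For $I_3$, we split it as $\bigl[\mathscr G(u_\delta^{n+1},u_\delta^n)-\mathcal N(u_\delta^{n+1/2})\bigr]+\bigl[\mathcal N(u_\delta^{n+1/2})-\mathcal N(u_\delta(t_{n+1/2}))\bigr]$, where $u_\delta^{n+1/2}=(u_\delta^{n+1}+u_\delta^n)/2$.
\begin{itemize}
	\item For the first bracket, \eqref{5.2} in Lemma \ref{lemma5.1} gives the bound $C_R\|u_\delta^{n+1}-u_\delta^n\|_{H^s}^2\le C_R\tau^2\|\partial_tu_\delta\|_{L^\infty(0,T;H^s)}^2$. Here $R$ is a uniform bound on $\|u_\delta\|_{L^\infty(0,T;H^s)}$ from \eqref{4.21}.
	\item For the second bracket, the local Lipschitz estimate \eqref{4.6} with $q=s$ (valid since $d/2<s\le r$) reduces it to $\|u_\delta^{n+1/2}-u_\delta(t_{n+1/2})\|_{H^s}\le C\tau^2\|\partial_t^2u_\delta\|_{L^\infty(0,T;H^s)}$.
\end{itemize}
The radius $R$ is independent of $\delta$, and so is the resulting constant $C$.

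Summing the bounds for $I_1$, $I_2$, $I_3$ gives \eqref{5.10}. The only genuine obstacle is uniformity in $\delta$ of the $\mathcal L_\delta$ term $I_2$. It is resolved by trading the $\delta^{-2}$ operator bound for two spatial derivatives via \eqref{4.2}, together with the uniform regularity \eqref{4.21}.
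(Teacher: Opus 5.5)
Your proposal is correct and follows essentially the same route as the paper's proof: the same three-term splitting after subtracting \eqref{2.5} at $t_{n+1/2}$, the uniform bound \eqref{4.2} (rather than \eqref{4.3}) for the $\mathcal L_\delta$ term via the $H^{s+2}$ control of $\partial_t^2u_\delta$ in \eqref{4.21}, and \eqref{5.2} together with \eqref{4.6} for the nonlinear term. The only cosmetic difference is that the paper bounds the trapezoidal-minus-midpoint error directly in $H^{s+2}$ before applying \eqref{4.2}, whereas you move $\mathcal L_\delta$ inside the Taylor remainder; the two are equivalent.
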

\begin{proof}
	Evaluating \eqref{2.5} at $t_{n+\frac12}$ and subtracting it from \eqref{5.9} give
	\begin{equation}\label{5.11}
		\begin{aligned}
			\mathcal R_\tau^{n+1/2}={}&\mathrm i\left(D_\tau u_\delta^n-\partial_tu_\delta(t_{n+1/2})\right)\\
			&+\mathcal L_\delta\left(\frac{u_\delta^{n+1}+u_\delta^n}{2}-u_\delta(t_{n+1/2})\right)\\
			&-\left[\mathscr G(u_\delta^{n+1},u_\delta^n)-\mathcal N(u_\delta(t_{n+1/2}))\right].
		\end{aligned}
	\end{equation}
	Taylor's formula and \eqref{4.21} in Corollary \ref{corollary4.7} give
	\begin{equation}
		\left\|D_\tau u_\delta^n-\partial_tu_\delta(t_{n+1/2})\right\|_{H^s}
		+\left\|\frac{u_\delta^{n+1}+u_\delta^n}{2}-u_\delta(t_{n+1/2})\right\|_{H^{s+2}}
		\le C\tau^2.
	\end{equation}
	Hence \eqref{4.2} in Lemma \ref{lemma4.1} bounds the term containing $\mathcal L_\delta$ in \eqref{5.11} by $C\tau^2$ in $H^s$. If $u_{\delta,\mathrm{av}}^{n+1/2}=(u_\delta^{n+1}+u_\delta^n)/2$, then \eqref{5.2} in Lemma \ref{lemma5.1}, \eqref{4.6} in Lemma \ref{lemma4.2}, and \eqref{4.21} in Corollary \ref{corollary4.7} give
	\begin{equation}
		\begin{aligned}
			&\|\mathscr G(u_\delta^{n+1},u_\delta^n)-\mathcal N(u_\delta(t_{n+1/2}))\|_{H^s}\\
			&\qquad\le C\|u_\delta^{n+1}-u_\delta^n\|_{H^s}^2
			+C\|u_{\delta,\mathrm{av}}^{n+1/2}-u_\delta(t_{n+1/2})\|_{H^s}
			\le C\tau^2.
		\end{aligned}
	\end{equation}
	The three bounds for the terms in \eqref{5.11} prove \eqref{5.10}.
\end{proof}

We now establish the unique solvability of the time-discrete scheme at all time levels, its second-order convergence, and a uniform $H^r$-bound for the discrete solution.
\begin{theorem}[Uniform time-discrete approximation]\label{theorem5.6new}
	Let $U_\delta^0=u_{\delta,0}$. There exist $R>0$, $\tau_0>0$, and $C>0$, independent of $\delta$ and $\tau$, such that for every $0<\delta\le\delta_0$ and $0<\tau\le\tau_0$, the time-discrete scheme \eqref{3.6} has a solution sequence $\{U_\delta^n\}_{n=0}^{N_T}$ satisfying
	\begin{equation}\label{5.20}
		\max_{0\le n\le N_T}\|U_\delta^n\|_{H^s}\le R,
	\end{equation}
	\begin{equation}\label{5.21}
		\max_{0\le n\le N_T}\|u_\delta(t_n)-U_\delta^n\|_{H^s}
		\le C\tau^2,
	\end{equation}
	and
	\begin{equation}\label{5.26}
		\max_{0\le n\le N_T}\|U_\delta^n\|_{H^r}\le C.
	\end{equation}
	Moreover, the solution is unique among all solution sequences with the prescribed initial value that satisfy \eqref{5.20}.
\end{theorem}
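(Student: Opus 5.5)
We proceed by induction on $n$, combining the one-step solvability of Proposition \ref{proposition5.2} with the error recursion of Lemma \ref{lemmaC} and the Gr\"onwall bound of Lemma \ref{lemmaD}. By \eqref{4.20} and Corollary \ref{corollary4.7}, set $R_0=\sup_{\delta}\sup_t\|u_\delta(t)\|_{H^s}$, which is finite and independent of $\delta$. Take $R=R_0+1$ and let $\tau_R$ be the step from Proposition \ref{proposition5.2} applied with radius $R$ (so the new iterate lies in $B_{R+1}$). The induction hypothesis is
\[
\|u_\delta(t_j)-U_\delta^j\|_{H^s}\le C_*\tau^2 \le 1
\quad\text{for } j\le n ,
\]
which gives $\|U_\delta^j\|_{H^s}\le R$. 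Then $U_\delta^{n+1}$ exists uniquely in $B_{R+1}$. We must still show that it obeys the improved bound, so that the induction closes with radius $R$ and not $R+1$.

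For the error estimate, set $e^j=u_\delta(t_j)-U_\delta^j$. Subtracting \eqref{3.6} from \eqref{5.9} gives
\[
\mathrm iD_\tau e^n=-\mathcal L_\delta e^{n+1/2}+Q^{n+1/2},
\qquad
Q^{n+1/2}=\mathscr G(u_\delta^{n+1},u_\delta^n)-\mathscr G(U_\delta^{n+1},U_\delta^n)+\mathcal R_\tau^{n+1/2}.
\]
All arguments lie in a ball of radius $2R+2$. Hence \eqref{5.1} and \eqref{5.10} give \eqref{assumedboundlemmac} with a $\delta$-independent $L$ and with $\varepsilon_n=C\tau^2$. Lemma \ref{lemmaC}, followed by Lemma \ref{lemmaD} with $a_j=\|e^j\|_{H^s}^2$, $\varepsilon=\tau^2$ and $a_0=0$, yields
\[
\max_{j\le n+1}\|e^j\|_{H^s}\le C_T\tau^2 .
\]
Crucially, $C_T$ does not depend on $n$, so we choose $C_*=C_T$ from the start. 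We then take $\tau_0\le\min\{\tau_R,\tau_*\}$ with $C_T\tau_0^2\le1$, which closes the induction and proves \eqref{5.20} and \eqref{5.21}.

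For uniqueness, suppose a second solution sequence also satisfies \eqref{5.20}. At each step both iterates lie in $B_R\subset B_{R+1}$ and come from the same $U_\delta^n$, so the contraction in Proposition \ref{proposition5.2} forces them to agree. Induction on $n$ completes the argument.

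The main obstacle is the $H^r$ bound \eqref{5.26}. The $H^s$ argument cannot simply be repeated at level $r$, because the consistency estimate at level $r$ would require $u_\delta$ in roughly $H^{r+6}$, which we do not have uniformly. I would try to propagate $H^r$ stability directly instead. Take the $H^r$ inner product of \eqref{3.6} with $U_\delta^{n+1/2}$ and take imaginary parts. By \eqref{4.4} the linear term drops out, leaving
\[
\|U_\delta^{n+1}\|_{H^r}^2-\|U_\delta^n\|_{H^r}^2=2\tau\,\operatorname{Im}\bigl(\mathscr G(U_\delta^{n+1},U_\delta^n),U_\delta^{n+1/2}\bigr)_{H^r}.
\]
The key tool is then a tame Moser estimate of the form \eqref{A3} applied to $\Psi$:
\[
\|\mathscr G(v,w)\|_{H^r}\le C(\|v\|_{L^\infty}+\|w\|_{L^\infty})\bigl(\|v\|_{H^r}+\|w\|_{H^r}\bigr).
\]
The $L^\infty$ norms are controlled uniformly through \eqref{5.20} and the embedding $H^s\hookrightarrow L^\infty$, since $s>d/2$. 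This gives the linear recursion
\[
a_{n+1}-a_n\le C\tau(a_{n+1}+a_n),\qquad a_n=\|U_\delta^n\|_{H^r}^2 .
\]
Discrete Gr\"onwall then bounds $a_n$ by $C\|u_{\delta,0}\|_{H^r}^2$, and the right-hand side is uniformly bounded by \eqref{4.18} and \eqref{4.17}. One technical point must be checked first: that $U_\delta^{n+1}\in H^r$ at all. This follows by running the same contraction in $H^r$ for fixed $\delta$, using the bounded-operator estimate \eqref{4.3}, or by noting that the fixed point of \eqref{5.6} preserves $H^r$ since $\mathcal S_{\delta,\tau}$ and $\mathcal C_{\delta,\tau}$ are bounded on $H^r$. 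Uniqueness in $H^s$ then identifies the two solutions.
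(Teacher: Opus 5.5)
Your argument for \eqref{5.20}, \eqref{5.21} and uniqueness is the paper's. It uses time-level induction, one-step solvability from Proposition~\ref{proposition5.2}, the error equation bounded via \eqref{5.1} and \eqref{5.10}, and then Lemma~\ref{lemmaC} and Lemma~\ref{lemmaD} with $\varepsilon=\tau^2$, $a_0=0$ and $C_T\tau_0^2\le1$ to close. One small correction: Lemma~\ref{lemma5.1} constrains the \emph{sum} of the four norms, which is about $4R$ rather than $2R+2$, but any fixed radius suffices.

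For \eqref{5.26} you take a different route. You test \eqref{3.6} with $U_\delta^{n+1/2}$ in $H^r$ and combine the identity with the tame estimate (exactly the paper's \eqref{5.27}) to get $a_{n+1}-a_n\le C\tau(a_{n+1}+a_n)$. The paper forms no $H^r$ energy identity. Instead it bounds the iterates $W^{(m+1)}=\mathcal T_n(W^{(m)})$ of \eqref{5.6} directly. By \eqref{5.4}--\eqref{5.5} and \eqref{5.27}, $\|W^{(m+1)}\|_{H^r}\le(1+C_R\tau)\|U_\delta^n\|_{H^r}+C_R\tau\|W^{(m)}\|_{H^r}$. Hence for $C_R\tau\le1/2$ one gets $\sup_m\|W^{(m)}\|_{H^r}\le(1+C_1\tau)\|U_\delta^n\|_{H^r}$. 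Since $W^{(m)}\to U_\delta^{n+1}$ in $H^s$, Fatou's lemma on the Fourier coefficients in \eqref{Hnorm} gives $\|U_\delta^{n+1}\|_{H^r}\le(1+C_1\tau)\|U_\delta^n\|_{H^r}$. That single argument delivers both $U_\delta^{n+1}\in H^r$ and the growth bound.

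This is exactly where your version is thin. Your energy identity presupposes $U_\delta^{n+1}\in H^r$, and neither justification you give establishes it as written.

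\begin{itemize}
\item Boundedness of $\mathcal S_{\delta,\tau}$ and $\mathcal C_{\delta,\tau}$ on $H^r$ only shows that $\mathcal T_n$ maps $H^r$ into itself. It does not place the $H^s$-limit of the iteration in $H^r$. For that you need $m$-uniform $H^r$ bounds on the iterates, with a smallness condition on $\tau$ depending only on the $H^s$ radius (which the tame estimate gives), plus a lower-semicontinuity step.
\item A contraction run directly in $H^r$ has a Lipschitz constant, and hence a step-size threshold, that depends on $\|U_\delta^n\|_{H^r}$. It becomes uniform only if you nest it inside the induction, using the $H^r$ bound already proved at earlier steps. You would then still need to check that the resulting fixed point lies in the $H^s$-ball $B_{R+1}$ before invoking uniqueness.
\item Appealing to \eqref{4.3} ``for fixed $\delta$'' would make the threshold $\delta$-dependent. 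That is incompatible with the $\delta$-independent $\tau_0$ claimed in the theorem.
\end{itemize}

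Once you supply the iteration-plus-Fatou step, the paper's growth estimate follows directly. Your energy identity then becomes redundant, though it remains a valid second way to obtain the bound.
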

\begin{proof}
	By \eqref{4.21} in Corollary \ref{corollary4.7},
	\begin{equation}
		K:=\sup_{0<\delta\le\delta_0}\sup_{0\le t\le T}\|u_\delta(t)\|_{H^s}<\infty.
	\end{equation}
	Set $R=K+1$ and $e^j=u_\delta(t_j)-U_\delta^j$. We prove existence and \eqref{5.21} by induction. Since $U_\delta^0=u_{\delta,0}$, $e^0=0$. Suppose that $U_\delta^0,\ldots,U_\delta^n$ have been constructed and
	\begin{equation}
		\max_{0\le j\le n}\|e^j\|_{H^s}\le1.
	\end{equation}
	Then $\|U_\delta^n\|_{H^s}\le R$, so Proposition \ref{proposition5.2} constructs a unique $U_\delta^{n+1}\in B_{R+1}$ for $\tau\le\tau_R$.
	
	For every completed step $0\le j\le n$, subtracting \eqref{3.6} from the residual equation \eqref{5.9} gives
	\begin{equation}\label{errorequationin5.6}
		\mathrm iD_\tau e^j=-\mathcal L_\delta e^{j+1/2}+\mathcal Q^{j+1/2},
	\end{equation}
	where
	\begin{equation}
		\mathcal Q^{j+1/2}
		=\mathscr G(u_\delta^{j+1},u_\delta^j)
		-\mathscr G(U_\delta^{j+1},U_\delta^j)
		+\mathcal R_\tau^{j+1/2}.
	\end{equation}
	The definition of $K$, the induction hypothesis, and Proposition \ref{proposition5.2} show that the total $H^s$ norm of the four arguments of $\mathscr G$ is bounded by $4(R+1)$. Therefore, applying \eqref{5.1} in Lemma \ref{lemma5.1} with radius $4(R+1)$ and using \eqref{5.10} in Lemma \ref{lemma5.4} give
	\begin{equation}
		\|\mathcal Q^{j+1/2}\|_{H^s}
		\le C_R\bigl(\|e^{j+1}\|_{H^s}+\|e^j\|_{H^s}\bigr)+C\tau^2.
	\end{equation}
	Applying Lemma \ref{lemmaC} to \eqref{errorequationin5.6} yields
	\begin{equation}
		\|e^{j+1}\|_{H^s}^2-\|e^j\|_{H^s}^2
		\le C\tau\bigl(\|e^{j+1}\|_{H^s}^2+\|e^j\|_{H^s}^2\bigr)+C\tau^5.
	\end{equation}
	Apply Lemma \ref{lemmaD} with $a_j=\|e^j\|_{H^s}^2$, $\varepsilon=\tau^2$, and $a_0=0$. Let $\tau_*>0$ and $C_T>0$ be the constants in Lemma \ref{lemmaD}, and choose $\tau_0\le\min\{\tau_R,\tau_*\}$ so that $C_T\tau_0^2\le1$. Lemma \ref{lemmaD}, applied with $m=n+1$, gives
	\begin{equation}
		\max_{0\le j\le n+1}\|e^j\|_{H^s}\le C_T\tau^2\le1.
	\end{equation}
	This verifies the induction hypothesis at $t_{n+1}$. Thus the solution is defined at every time level, and taking $n=N_T-1$ proves \eqref{5.21}. The definition of $K$ and \eqref{5.21} give \eqref{5.20}. If another solution sequence satisfies \eqref{5.20} and agrees with $\{U_\delta^j\}$ through $t_n$, Proposition \ref{proposition5.2} gives equality at $t_{n+1}$. Hence the solution is unique among the sequences satisfying \eqref{5.20}.
	
	It remains to prove \eqref{5.26}. The pointwise map $\Psi$ defined in the proof of Lemma \ref{lemma5.1} satisfies $\Psi(0,0)=0$. Since $s>d/2$, the embedding $H^s\hookrightarrow L^\infty$ and \eqref{A3} in Lemma \ref{lemmaA} give
	\begin{equation}\label{5.27}
		\|\mathscr G(v,w)\|_{H^r}
		\le C_R\bigl(\|v\|_{H^r}+\|w\|_{H^r}\bigr),
		\qquad
		\|v\|_{H^s}+\|w\|_{H^s}\le2R+1.
	\end{equation}
	Assume $U_\delta^n\in H^r$ and consider the fixed-point iteration associated with \eqref{5.6},
	\begin{equation}
		W^{(0)}=U_\delta^n,
		\qquad
		W^{(m+1)}=\mathcal T_n(W^{(m)}).
	\end{equation}
	The proof of Proposition \ref{proposition5.2} gives $W^{(m)}\in B_{R+1}$ and $W^{(m)}\to U_\delta^{n+1}$ in $H^s$. Estimates \eqref{5.4}--\eqref{5.5} in Lemma \ref{lemmaB}, the fixed-point equation \eqref{5.6}, and \eqref{5.27} show inductively that $W^{(m)}\in H^r$ and give
	\begin{equation}
		\|W^{(m+1)}\|_{H^r}
		\le(1+C_R\tau)\|U_\delta^n\|_{H^r}
		+C_R\tau\|W^{(m)}\|_{H^r}.
	\end{equation}
	After decreasing $\tau_0$ so that $C_R\tau_0\le1/2$, iteration in $m$ yields
	\begin{equation}
		\sup_{m\ge0}\|W^{(m)}\|_{H^r}
		\le(1+C_1\tau)\|U_\delta^n\|_{H^r}.
	\end{equation}
	The convergence in $H^s$, Fatou's lemma, and the norm identity \eqref{Hnorm} therefore imply
	\begin{equation}
		\|U_\delta^{n+1}\|_{H^r}\le(1+C_1\tau)\|U_\delta^n\|_{H^r}.
	\end{equation}
	Iterating this estimate in $n$ and using \eqref{4.20} in Theorem \ref{theorem4.6} at $t=0$ prove \eqref{5.26}.
\end{proof}

\section{Convergence and asymptotic compatibility of the fully discrete Fourier methods}\label{sec6}
In this section, we analyze the two spatial discretizations introduced in section \ref{sec3}. The Fourier--Galerkin solution is used as an intermediate approximation between the time-discrete solution and the Fourier collocation solution. Throughout this section, we retain the regularity assumptions of Corollary \ref{corollary4.7} that
\begin{equation}
	s>\frac d2,
	\quad
	r\ge s+6,
	\quad
	s\in\mathbb N_0,
\end{equation}
and assume that $0<\delta\le\delta_0$. The constants below are independent of $\delta$, $\tau$, and $N$ unless a dependence is stated explicitly.
\subsection{Preliminary estimates}
For a grid function $V=(V_j)_{j\in\mathbb J_M}$, define its trigonometric reconstruction by
\begin{equation}\label{6.1}
	\mathcal I_NV=\sum_{k\in K_N}\widehat V_k e_k\in X_N.
\end{equation}
Moreover, for a continuous periodic function $v$, define its trigonometric interpolant by
\begin{equation}\label{6.2}
	I_Nv=\mathcal I_N\bigl((v(x_j))_{j\in\mathbb J_M}\bigr).
\end{equation}
For $q\ge0$, define the discrete Sobolev inner product and norm by
\begin{equation}\label{6.3}
	\langle V,W\rangle_{q,h}:=\sum_{k\in K_N}(1+|k|^2)^q\widehat V_k\overline{\widehat W_k},
	\quad
	\|V\|_{q,h}^2:=\langle V,V\rangle_{q,h}=\|\mathcal I_NV\|_{H^q}^2.
\end{equation}
The definitions in \eqref{3.17} -- \eqref{3.20} and \eqref{3.26}, together with the reality of $\lambda_\delta(k)$, yield
\begin{equation}\label{6.4}
	\begin{aligned}
		\mathcal I_N\mathcal L_{\delta,N}^{c}V&=
		\mathcal L_\delta\mathcal I_NV,\\
		\mathcal I_N\mathscr G_h(V,W)&=
		I_N\mathscr G(\mathcal I_NV,\mathcal I_NW),\\
		\langle\mathcal L_{\delta,N}^{c}V,W\rangle_{q,h}&=\langle V,\mathcal L_{\delta,N}^{c}W\rangle_{q,h}.
	\end{aligned}
\end{equation}

\begin{lemma}[Projection and interpolation estimates]\label{lemma6.1}
	Let $0\le q\le\mu$ and $\mu>d/2$. Then, for every $v\in H^\mu(\mathbb T^d)$,
	\begin{equation}\label{6.5}
		\|v-P_Nv\|_{H^q}+\|v-I_Nv\|_{H^q}+\|(P_N-I_N)v\|_{H^q} \le CN^{q-\mu}\|v\|_{H^\mu}.
	\end{equation}
	Moreover, for every $v\in H^\mu(\mathbb T^d)$,
	\begin{equation}\label{6.6}
		P_N\mathcal L_\delta v=\mathcal L_\delta P_Nv.
	\end{equation}
\end{lemma}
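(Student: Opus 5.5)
The plan is to work entirely on the Fourier side, using \eqref{Hnorm} and the structure of $K_N$ and the grid with $M=2N+1$ points per direction.

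\textbf{Projection error.} Since $v-P_Nv=\sum_{k\notin K_N}\widehat v_ke_k$ and every $k\notin K_N$ has $\max_\ell|k_\ell|\ge N+1$, hence $1+|k|^2\ge N^2$, we have $(1+|k|^2)^{q}\le N^{2(q-\mu)}(1+|k|^2)^{\mu}$ for $q\le\mu$. Summing gives $\|v-P_Nv\|_{H^q}\le N^{q-\mu}\|v\|_{H^\mu}$.

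\textbf{Aliasing error.} For $\mu>d/2$ the Fourier series of $v$ converges absolutely, so the grid values are exact. Orthogonality of the grid modes gives the aliasing formula $\widehat{(I_Nv)}_k=\sum_{m\in\mathbb Z^d}\widehat v_{k+Mm}$ for $k\in K_N$. Hence
\begin{equation*}
	(I_N-P_N)v=\sum_{k\in K_N}\Bigl(\sum_{m\ne0}\widehat v_{k+Mm}\Bigr)e_k .
\end{equation*}
For $k\in K_N$ and $m\ne0$ we have $|k+Mm|\ge |m|_\infty(M-N)\ge c N|m|$, while $1+|k|^2\le C N^2$. Apply Cauchy--Schwarz with weights $(1+|k+Mm|^2)^{\mu}$:
\begin{equation*}
	\Bigl|\sum_{m\ne0}\widehat v_{k+Mm}\Bigr|^2\le\Bigl(\sum_{m\ne0}(1+|k+Mm|^2)^{-\mu}\Bigr)\sum_{m\ne0}(1+|k+Mm|^2)^{\mu}|\widehat v_{k+Mm}|^2 .
\end{equation*}
The first factor is bounded by $C N^{-2\mu}\sum_{m\ne0}|m|^{-2\mu}\le CN^{-2\mu}$, which is where $\mu>d/2$ is needed. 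Multiply by $(1+|k|^2)^q\le CN^{2q}$ and sum over $k\in K_N$. Each index $k+Mm$ appears at most once, so the double sum is at most $\|v\|_{H^\mu}^2$. This yields $\|(P_N-I_N)v\|_{H^q}\le CN^{q-\mu}\|v\|_{H^\mu}$.

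\textbf{Assembling and commutation.} The triangle inequality then bounds $\|v-I_Nv\|_{H^q}$, which completes \eqref{6.5}. Identity \eqref{6.6} holds because both $P_N$ and $\mathcal L_\delta$ are Fourier multipliers, with symbols $\mathbf 1_{K_N}(k)$ and $-\lambda_\delta(k)$. Since $\lambda_\delta(k)\le|k|^2$ by Lemma~\ref{lemma4.1}, $\mathcal L_\delta v\in L^2$, so both sides are well defined and have the same coefficients.

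The main obstacle is the aliasing estimate: one must check the geometric lower bound $|k+Mm|\gtrsim N|m|$ uniformly over $k\in K_N$, and verify that the lattice sum $\sum_{m\ne0}|m|^{-2\mu}$ converges. Everything else is routine.
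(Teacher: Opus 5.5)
Your proof is correct and follows essentially the same route as the paper: the tail bound for $P_N$, the aliasing formula, the lower bound $|k+Mm|\ge cN|m|$ for $k\in K_N$ and $m\ne0$, weighted Cauchy--Schwarz using $\mu>d/2$, the unique decomposition of indices as $k+Mm$, the triangle inequality, and commuting Fourier multipliers for \eqref{6.6}. There are two harmless slips: because the grid is shifted ($x_j=-\pi\mathbf 1+hj$ with $M$ odd), the aliasing formula actually reads $\widehat{(I_Nv)}_k=\sum_{m}(-1)^{m_1+\cdots+m_d}\widehat v_{k+Mm}$ (the paper's unimodular $\sigma_\ell$), which your Cauchy--Schwarz step absorbs unchanged; and $\mathcal L_\delta v\in L^2$ follows from \eqref{4.3}, not from $\lambda_\delta(k)\le|k|^2$, since the latter would require $\mu\ge2$.
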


\begin{proof}
	The projection estimate in \eqref{6.5} follows directly from the Fourier definition of $P_N$:
	\begin{equation}
		\|v-P_Nv\|_{H^q}\le CN^{q-\mu}\|v\|_{H^\mu}.
	\end{equation}
	For the interpolation error, the assumption $\mu>d/2$ implies absolute convergence of the Fourier series of $v$. Discrete Fourier orthogonality therefore gives
	\begin{equation}
		\widehat{I_Nv}_k=\widehat v_k+\sum_{\ell\in\mathbb Z^d\setminus\{0\}}\sigma_\ell\widehat v_{k+M\ell},\quad k\in K_N,
		\qquad |\sigma_\ell|=1.
	\end{equation}
	Since $M=2N+1$, for $k\in K_N$ and $\ell\ne0$ we have $|k+M\ell|\ge cM|\ell|$. Hence, by Cauchy--Schwarz and $\mu>d/2$,
	\begin{equation}
		\sum_{\ell\ne0}(1+|k+M\ell|^2)^{-\mu}\le CN^{-2\mu}.
	\end{equation}
	Using the unique representation of every Fourier index as $k+M\ell$ with $k\in K_N$, we obtain
	\begin{equation}
		\|P_Nv-I_Nv\|_{H^q}
		\le CN^{q-\mu}\|v\|_{H^\mu}.
	\end{equation}
	Combining this estimate with the projection estimate proves \eqref{6.5}. Finally, $P_N$ and $\mathcal L_\delta$ are Fourier multipliers, so their symbols commute. This proves \eqref{6.6}.
\end{proof}

The next lemma gives the discrete nonlinear estimates used in the collocation analysis.
\begin{lemma}\label{lemma6.2}
	Let $V_\nu,W_\nu$, $\nu=1,2$, be grid functions. For every $R>0$, there exists $C_R>0$, independent of $N$, such that, whenever $\sum_{\nu=1}^2\left(\|V_\nu\|_{s,h}+\|W_\nu\|_{s,h}\right)\le R$,
	\begin{equation}\label{6.10}
		\|\mathscr G_h(V_1,W_1)-\mathscr G_h(V_2,W_2)\|_{s,h}\le C_R\left(\|V_1-V_2\|_{s,h}+\|W_1-W_2\|_{s,h}\right).
	\end{equation}
	Moreover, if $v_N,w_N\in X_N$ satisfy $\|v_N\|_{H^s}+\|w_N\|_{H^s}\le R$, then we have
	\begin{equation}\label{6.11}
		\left\|(P_N-I_N)\mathscr G(v_N,w_N)\right\|_{H^s}\le C_RN^{s-r}\left(\|v_N\|_{H^r}+\|w_N\|_{H^r}\right).
	\end{equation}
\end{lemma}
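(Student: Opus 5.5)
The plan is to transfer both estimates to trigonometric polynomials through the reconstruction $\mathcal I_N$. By \eqref{6.3}, $\|V\|_{s,h}=\|\mathcal I_NV\|_{H^s}$. By \eqref{6.4}, $\mathcal I_N\mathscr G_h(V,W)=I_N\mathscr G(\mathcal I_NV,\mathcal I_NW)$. So with $v_\nu=\mathcal I_NV_\nu$ and $w_\nu=\mathcal I_NW_\nu$, estimate \eqref{6.10} is equivalent to
\begin{equation*}
\|I_N[\mathscr G(v_1,w_1)-\mathscr G(v_2,w_2)]\|_{H^s}\le C_R\bigl(\|v_1-v_2\|_{H^s}+\|w_1-w_2\|_{H^s}\bigr)
\end{equation*}
for $v_\nu,w_\nu\in X_N$ whose $H^s$ norms sum to at most $R$.

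\textbf{Step 1: $H^s$-stability of $I_N$.} I will show that $\|I_Ng\|_{H^s}\le C\|g\|_{H^s}$ for all $g\in H^s$ with $s>d/2$, with $C$ independent of $N$. Write $I_Ng=P_Ng+(I_N-P_N)g$. The first term is trivially bounded by $\|g\|_{H^s}$. For the second, apply \eqref{6.5} with $q=\mu=s$, which is allowed because $\mu=s>d/2$; this gives $\|(P_N-I_N)g\|_{H^s}\le C\|g\|_{H^s}$.

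Now take $g=\mathscr G(v_1,w_1)-\mathscr G(v_2,w_2)$. Estimate \eqref{5.1} of Lemma \ref{lemma5.1} bounds $\|g\|_{H^s}$ by $C_R(\|v_1-v_2\|_{H^s}+\|w_1-w_2\|_{H^s})$. Combined with the stability of $I_N$, this gives \eqref{6.10}.

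\textbf{Step 2: the aliasing estimate \eqref{6.11}.} Apply \eqref{6.5} with $q=s$ and $\mu=r$; this is admissible since $r\ge s+6>d/2$. It gives
\begin{equation*}
\|(P_N-I_N)\mathscr G(v_N,w_N)\|_{H^s}\le CN^{s-r}\|\mathscr G(v_N,w_N)\|_{H^r}.
\end{equation*}
It then remains to bound $\|\mathscr G(v_N,w_N)\|_{H^r}$ linearly in $\|v_N\|_{H^r}+\|w_N\|_{H^r}$, with a constant depending only on the $H^s$ size $R$. For this I use the Moser estimate \eqref{A3} of Lemma \ref{lemmaA}, applied at regularity $q=r$ to the pointwise map $\Psi$ from the proof of Lemma \ref{lemma5.1}. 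Since $\Psi(0,0)=0$, this gives
\begin{equation*}
\|\Psi(v_N,w_N)\|_{H^r}\le C_r(\|v_N\|_{L^\infty}+\|w_N\|_{L^\infty})(\|v_N\|_{H^r}+\|w_N\|_{H^r}).
\end{equation*}
This is exactly the content of \eqref{5.27}. The embedding $H^s\hookrightarrow L^\infty$ bounds the $L^\infty$ norms by $CR$, and monotonicity of $C_r$ turns the prefactor into a constant $C_R$.

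\textbf{Main obstacle: regularity of $\Psi$.} Applying \eqref{A3} at level $q=r$ requires $\Psi\in C^{r+3}_{\mathrm{loc}}$. The symmetric quotient $G$ is built from $f\in C^{r+3}_{\mathrm{loc}}$ through the integral representation \eqref{3.3}. Composed with $|z|^2$ and $|\zeta|^2$, this gives $\Psi\in C^{r+3}_{\mathrm{loc}}(\mathbb R^4;\mathbb R^2)$: differentiating under the integral in \eqref{3.3} loses no regularity. I would verify this smoothness of $G$ carefully. I would also check that the constant is $N$-independent, which holds because $v_N,w_N$ are handled purely as $H^s$ functions.
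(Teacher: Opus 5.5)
Your proof is correct and follows the paper's argument essentially step for step. You reduce \eqref{6.10} through $\mathcal I_N$ and \eqref{6.3}--\eqref{6.4} to the uniform $H^s$-stability of $I_N$ (from \eqref{6.5} with $q=\mu=s$) plus the Lipschitz bound \eqref{5.1}, and you obtain \eqref{6.11} from \eqref{6.5} with $q=s$, $\mu=r$ followed by the Moser bound \eqref{5.27}. Your extra check that $\Psi\in C^{r+3}_{\mathrm{loc}}$, by differentiating under the integral in \eqref{3.3}, is a detail the paper leaves implicit, and it goes through as you describe.
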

\begin{proof}
	Set $v_{\nu,N}=\mathcal I_NV_\nu$ and $w_{\nu,N}=\mathcal I_NW_\nu$ for $\nu=1,2$. By \eqref{6.3},
	\begin{equation}
		\|v_{\nu,N}\|_{H^s}=\|V_\nu\|_{s,h},
		\qquad
		\|w_{\nu,N}\|_{H^s}=\|W_\nu\|_{s,h}.
	\end{equation}
	Taking $q=\mu=s$ in \eqref{6.5} in Lemma \ref{lemma6.1} shows that $I_N$ is uniformly bounded on $H^s$. Hence, using the reconstruction identity \eqref{6.4} and the Lipschitz estimate \eqref{5.1} in Lemma \ref{lemma5.1}, we obtain
	\begin{equation}
		\begin{aligned}
			&\|\mathscr G_h(V_1,W_1)-\mathscr G_h(V_2,W_2)\|_{s,h}\\
			&\quad=\left\|I_N\!\left[\mathscr G(v_{1,N},w_{1,N})-\mathscr G(v_{2,N},w_{2,N})\right]\right\|_{H^s}\\
			&\quad\le C_R\left(\|V_1-V_2\|_{s,h}+\|W_1-W_2\|_{s,h}\right),
		\end{aligned}
	\end{equation}
	which proves \eqref{6.10}.
	Applying \eqref{6.5} in Lemma \ref{lemma6.1} with $q=s$ and $\mu=r$, followed by \eqref{5.27}, gives
	\begin{equation}
		\begin{aligned}
			\|(P_N-I_N)\mathscr G(v_N,w_N)\|_{H^s}
			&\le CN^{s-r}\|\mathscr G(v_N,w_N)\|_{H^r}\\
			&\le C_RN^{s-r}\left(\|v_N\|_{H^r}+\|w_N\|_{H^r}\right).
		\end{aligned}
	\end{equation}
	This proves \eqref{6.11}.
\end{proof}

\subsection{Fourier--Galerkin approximation}
We first establish the local unique solvability of one time step of the auxiliary Fourier--Galerkin scheme \eqref{3.12}.
\begin{proposition}[Galerkin one-step solvability]\label{proposition6.3}
	For every $R>0$, there exists $\tau_R>0$, independent of $\delta$ and $N$, such that, if
	\begin{equation}
		U_{\delta,N}^n\in X_N,\quad\|U_{\delta,N}^n\|_{H^s}\le R,\quad0<\tau\le\tau_R,
	\end{equation}
	then the $n$-th step of \eqref{3.12} has a unique solution in
	\begin{equation}
		\left\{v_N\in X_N:\|v_N\|_{H^s}\le R+1\right\}.
	\end{equation}
\end{proposition}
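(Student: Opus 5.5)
The argument mirrors Proposition \ref{proposition5.2}, restricted to the finite-dimensional space $X_N$. Using \eqref{3.13}, I rewrite the $n$-th Galerkin step as the fixed-point equation
\begin{equation*}
	W=\mathcal T_{n,N}(W):=\mathcal C_{\delta,\tau}U_{\delta,N}^n+\mathcal S_{\delta,\tau}P_N\mathscr G(W,U_{\delta,N}^n),\qquad W\in X_N.
\end{equation*}
The operators $\mathcal S_{\delta,\tau}$ and $\mathcal C_{\delta,\tau}$ are Fourier multipliers, and their symbols are finite because $\mathrm i/\tau-\lambda_\delta(k)/2\neq0$. Hence they map $X_N$ into itself. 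Since $P_N$ maps into $X_N$, the map $\mathcal T_{n,N}$ sends $X_N$ to $X_N$. Applying $(\mathrm i/\tau+\frac12\mathcal L_\delta)$ to this fixed-point equation recovers \eqref{3.13}, so solutions of the step are exactly the fixed points of $\mathcal T_{n,N}$ in $X_N$.

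Next I show that $\mathcal T_{n,N}$ is a contraction on the ball $B^N_{R+1}=\{v_N\in X_N:\|v_N\|_{H^s}\le R+1\}$.
\begin{itemize}
	\item The projection $P_N$ is a contraction in every $H^q$ norm, because in the norm \eqref{Hnorm} it simply truncates Fourier coefficients.
	\item By \eqref{5.4}--\eqref{5.5}, $\mathcal C_{\delta,\tau}$ is an $H^s$-isometry and $\|\mathcal S_{\delta,\tau}\|_{H^s\to H^s}\le\tau$, with no dependence on $\delta$ or $N$.
	\item By \eqref{5.1} with radius $2(R+1)+R$, and using $\mathscr G(0,0)=0$, the nonlinear term is Lipschitz and bounded on the ball.
\end{itemize}
Together these give
\begin{equation*}
	\|\mathcal T_{n,N}(W)\|_{H^s}\le R+\tau K_R,\qquad \|\mathcal T_{n,N}(W)-\mathcal T_{n,N}(Z)\|_{H^s}\le\tau C_R\|W-Z\|_{H^s},
\end{equation*}
for all $W,Z\in B^N_{R+1}$. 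The constants $K_R$ and $C_R$ are independent of $\delta$ and $N$.

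I then choose $\tau_R$ with $\tau_RK_R\le1$ and $\tau_RC_R<1$. The ball $B^N_{R+1}$ is closed in the finite-dimensional space $X_N$, hence complete. The Banach fixed-point theorem gives a unique fixed point in $B^N_{R+1}$, which is the required solution.

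No step is a real obstacle. The only point needing care is the uniformity of $\tau_R$ in $\delta$ and $N$. This holds because the constants come only from the $\delta$-independent bounds \eqref{5.1}, \eqref{5.4}, and \eqref{5.5}, together with the fact that $\|P_N\|_{H^s\to H^s}\le1$ for every $N$.
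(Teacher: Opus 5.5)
Your proof is correct and follows the paper's argument essentially verbatim: you rewrite the step via \eqref{3.13} as $W=\mathcal C_{\delta,\tau}U_{\delta,N}^n+\mathcal S_{\delta,\tau}P_N\mathscr G(W,U_{\delta,N}^n)$ and apply the contraction mapping theorem on the $H^s$-ball of radius $R+1$ in $X_N$, using the $H^s$-contractivity of $P_N$ together with \eqref{5.1}, \eqref{5.4}, and \eqref{5.5}. One trivial bookkeeping point: $U_{\delta,N}^n$ appears in both pairs of arguments in \eqref{5.1}, so the radius for the Lipschitz estimate should be $2(R+1)+2R=4R+2$ rather than $2(R+1)+R$; this only changes the $R$-dependent constant.
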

\begin{proof}
	Using the equivalent form \eqref{3.13}, the $n$-th Galerkin step can be written as
	\begin{equation}\label{6.17}
		W_N=\mathcal C_{\delta,\tau}U_{\delta,N}^n+\mathcal S_{\delta,\tau}P_N\mathscr G(W_N,U_{\delta,N}^n),
	\end{equation}
	where the right-hand side defines $\mathcal T_n(W_N)$. Since $P_N$ is contractive in $H^s$, estimates \eqref{5.1}, \eqref{5.4}, and \eqref{5.5} give
	\begin{equation}
		\|\mathcal T_n(W_N)\|_{H^s}\le R+\tau K_R,
		\qquad
		\|\mathcal T_n(W_N)-\mathcal T_n(Z_N)\|_{H^s}\le\tau C_R\|W_N-Z_N\|_{H^s}
	\end{equation}
	for $W_N,Z_N$ in the closed $H^s$-ball of radius $R+1$ in $X_N$. Here $K_R$ and $C_R$ are independent of $\delta$ and $N$. Choosing $\tau_RK_R\le1$ and $\tau_RC_R<1$, the contraction mapping theorem gives the asserted unique solution.
\end{proof}

We next compare the Fourier--Galerkin solution $U_{\delta,N}^n$ of \eqref{3.12} with the time-discrete solution $U_\delta^n$ of \eqref{3.6}. We decompose their difference as
\begin{equation}\label{6.18}
	U_\delta^n-U_{\delta,N}^n=\eta^n+\theta^n,
	\quad\eta^n:=U_\delta^n-P_NU_\delta^n,
	\quad\theta^n:=P_NU_\delta^n-U_{\delta,N}^n.
\end{equation}
The next theorem establishes the error of the auxiliary Fourier--Galerkin approximation and a uniform $H^r$-bound for the Galerkin solution, which will be used to control the nonlinear aliasing error in the collocation analysis.

\begin{theorem}[Uniform Galerkin approximation]\label{theorem6.5new}
	Let $\{U_\delta^n\}_{n=0}^{N_T}$ be the time-discrete solution given by Theorem \ref{theorem5.6new}. Then there exist $\tau_0>0$, $N_0\ge1$, $C>0$, and $R>0$, independent of $\delta$, $\tau$, and $N$, such that, for $0<\delta\le\delta_0$, $0<\tau\le\tau_0$, and $N\ge N_0$, the Fourier--Galerkin scheme \eqref{3.12}, with $U_{\delta,N}^0=P_Nu_{\delta,0}$, has a solution sequence satisfying
	\begin{equation}\label{galerkinhsbound}
		\max_{0\le n\le N_T}
		\|U_{\delta,N}^n\|_{H^s}\le R,
	\end{equation}
	\begin{equation}\label{6.24}
		\max_{0\le n\le N_T}\|U_\delta^n-U_{\delta,N}^n\|_{H^s}\le CN^{s-r}
	\end{equation}
	and
	\begin{equation}\label{6.25}
		\max_{0\le n\le N_T}\|U_{\delta,N}^n\|_{H^r}\le C.
	\end{equation}
	Moreover, the solution is unique among all solution sequences satisfying \eqref{galerkinhsbound}.
\end{theorem}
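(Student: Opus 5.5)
The plan is to mirror the induction in the proof of Theorem \ref{theorem5.6new}, working with the splitting \eqref{6.18}. By \eqref{5.26} and \eqref{6.5} with $q=s$, $\mu=r$, the projection part satisfies $\|\eta^n\|_{H^s}\le CN^{s-r}$ uniformly in $n$ and $\delta$. It therefore remains to estimate $\theta^n\in X_N$. Set $R=K'+1$, where $K'=\sup_n\|U_\delta^n\|_{H^s}$ is bounded by \eqref{5.20}. The induction hypothesis is $\max_{0\le j\le n}\|\theta^j\|_{H^s}\le1$ and $\|\eta^j\|_{H^s}\le 1$; the latter holds for $N\ge N_0$. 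Under this hypothesis $\|U_{\delta,N}^n\|_{H^s}\le R$, so Proposition \ref{proposition6.3} produces $U_{\delta,N}^{n+1}$ in the ball of radius $R+1$ for $\tau\le\tau_R$.

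To get the error equation, apply $P_N$ to \eqref{3.6} and use \eqref{6.6}:
\begin{equation*}
\mathrm iD_\tau P_NU_\delta^j=-\mathcal L_\delta P_NU_\delta^{j+1/2}+P_N\mathscr G(U_\delta^{j+1},U_\delta^j).
\end{equation*}
Subtracting \eqref{3.13} gives
\begin{equation*}
\mathrm iD_\tau\theta^j=-\mathcal L_\delta\theta^{j+1/2}+Q^{j+1/2},\qquad Q^{j+1/2}=P_N\bigl[\mathscr G(U_\delta^{j+1},U_\delta^j)-\mathscr G(U_{\delta,N}^{j+1},U_{\delta,N}^j)\bigr].
\end{equation*}
Because $P_N$ is contractive in $H^s$, \eqref{5.1} with radius $4(R+1)$ gives
\begin{equation*}
\|Q^{j+1/2}\|_{H^s}\le C_R\bigl(\|\theta^{j+1}\|_{H^s}+\|\theta^j\|_{H^s}\bigr)+C_R\bigl(\|\eta^{j+1}\|_{H^s}+\|\eta^j\|_{H^s}\bigr).
\end{equation*}
This is \eqref{assumedboundlemmac} with $\varepsilon_j=CN^{s-r}$. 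Lemma \ref{lemmaC} and Lemma \ref{lemmaD} then apply, with $a_0=\|\theta^0\|_{H^s}^2=\|P_N(u_{\delta,0}-U_\delta^0)\|_{H^s}^2=0$. They yield $\max_{j\le n+1}\|\theta^j\|_{H^s}\le C_TN^{s-r}\le1$ once $N_0$ is large enough that $C_TN_0^{s-r}\le 1$ (recall $r>s$). This closes the induction and, combined with the bound on $\eta^n$, proves \eqref{6.24}. The bound \eqref{galerkinhsbound} then follows from the triangle inequality. Uniqueness follows step by step from Proposition \ref{proposition6.3}, exactly as in Theorem \ref{theorem5.6new}.

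The main obstacle is the uniform $H^r$ bound \eqref{6.25}. The $H^s$ error estimate alone only controls $\theta^n$ in $H^s$, and an inverse inequality would cost $N^{r-s}$, which exactly cancels the gain and leaves no useful bound. I would instead repeat the last part of the proof of Theorem \ref{theorem5.6new}. Run the fixed-point iteration \eqref{6.17} starting from $W^{(0)}=U_{\delta,N}^n$. Combining \eqref{5.4}--\eqref{5.5}, the fact that $P_N$ is contractive in $H^r$, and \eqref{5.27} (valid since the iterates stay in the $H^s$-ball of radius $R+1$) gives
\begin{equation*}
\|W^{(m+1)}\|_{H^r}\le(1+C_R\tau)\|U_{\delta,N}^n\|_{H^r}+C_R\tau\|W^{(m)}\|_{H^r}.
\end{equation*}
Since $X_N$ is finite dimensional, the iterates converge to $U_{\delta,N}^{n+1}$ in $H^r$ as well, so $\|U_{\delta,N}^{n+1}\|_{H^r}\le(1+C_1\tau)\|U_{\delta,N}^n\|_{H^r}$ with $C_1$ independent of $N$ and $\delta$. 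Iterating over $n\le N_T$ gives $\|U_{\delta,N}^n\|_{H^r}\le e^{C_1T}\|P_Nu_{\delta,0}\|_{H^r}$. Finally $\|P_Nu_{\delta,0}\|_{H^r}\le\|u_{\delta,0}\|_{H^r}\le C$ by \eqref{4.20}, which proves \eqref{6.25}.
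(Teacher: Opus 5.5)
Your proposal is correct and follows essentially the same route as the paper: the same $\eta^n/\theta^n$ splitting, projected error equation, induction via Lemmas \ref{lemmaC}--\ref{lemmaD}, and a one-step $(1+C\tau)$ growth bound in $H^r$. The paper obtains that bound by absorbing directly in \eqref{6.17}, since $U_{\delta,N}^{n+1}\in X_N$ already has finite $H^r$ norm, so your fixed-point iteration is unnecessary. One correction to your side remark: the inverse-inequality route does not fail, because $\|\theta^n\|_{H^r}\le CN^{r-s}\|\theta^n\|_{H^s}\le C$, combined with $\|P_NU_\delta^n\|_{H^r}\le C$ from \eqref{5.26}, is exactly the $O(1)$ bound \eqref{6.25} requires and gives an even shorter proof.
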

\begin{proof}
	The proof follows the time-level induction used in Theorem \ref{theorem5.6new}. We record the Galerkin-specific projection estimates and the uniform $H^r$ bound.
	
	By \eqref{5.26} in Theorem \ref{theorem5.6new} and the embedding $H^r\hookrightarrow H^s$, there exists $K>0$, independent of $\delta$ and $\tau$, such that
	\begin{equation}
		\max_{0\le n\le N_T}\|U_\delta^n\|_{H^s}\le K.
	\end{equation}
	Set $R=K+3$. For the decomposition \eqref{6.18}, \eqref{6.5} in Lemma \ref{lemma6.1} and \eqref{5.26} in Theorem \ref{theorem5.6new} give
	\begin{equation}\label{6.20}
		\|\eta^j\|_{H^s}\le C_\eta N^{s-r},
		\quad 0\le j\le N_T.
	\end{equation}
	
	Since $U_{\delta,N}^0=P_NU_\delta^0$, we have $\theta^0=0$. Suppose that $U_{\delta,N}^0,\ldots,U_{\delta,N}^n$ have been constructed and
	\begin{equation}
		\max_{0\le j\le n}\|\theta^j\|_{H^s}\le1.
	\end{equation}
	If $N$ is sufficiently large that $C_\eta N^{s-r}\le1$, then \eqref{6.18} and \eqref{6.20} imply
	\begin{equation}
		\|U_{\delta,N}^j\|_{H^s}\le\|U_\delta^j\|_{H^s}+\|\eta^j\|_{H^s}+\|\theta^j\|_{H^s}
		\le K+2,
		\quad 0\le j\le n.
	\end{equation}
	Proposition \ref{proposition6.3} therefore constructs $U_{\delta,N}^{n+1}$ with $\|U_{\delta,N}^{n+1}\|_{H^s}\le R$ for sufficiently small $\tau$.
	
	For every completed step $0\le j\le n$, applying $P_N$ to \eqref{3.6}, using \eqref{6.6} in Lemma \ref{lemma6.1}, and subtracting \eqref{3.13} give
	\begin{equation}\label{6.22}
		\mathrm iD_\tau\theta^j
		=-\mathcal L_\delta\theta^{j+1/2}
		+P_N\!\left[
		\mathscr G(U_\delta^{j+1},U_\delta^j)
		-\mathscr G(U_{\delta,N}^{j+1},U_{\delta,N}^j)
		\right].
	\end{equation}
	The bounds above, the decomposition \eqref{6.18}, the projection estimate \eqref{6.20}, and \eqref{5.1} in Lemma \ref{lemma5.1} yield
	\begin{equation}\label{6.23}
		\begin{aligned}
			&\left\|P_N\!\left[\mathscr G(U_\delta^{j+1},U_\delta^j)-\mathscr G(U_{\delta,N}^{j+1},U_{\delta,N}^j)\right]\right\|_{H^s}\\
			&\qquad\le C_R\left(\|\theta^{j+1}\|_{H^s}+\|\theta^j\|_{H^s}+N^{s-r}\right).
		\end{aligned}
	\end{equation}
	Applying Lemma \ref{lemmaC} to \eqref{6.22} and using \eqref{6.23} give
	\begin{equation}\label{6.21}
		\begin{aligned}
			\|\theta^{j+1}\|_{H^s}^2-\|\theta^j\|_{H^s}^2
			\le{}&C_R\tau\left(\|\theta^{j+1}\|_{H^s}^2+\|\theta^j\|_{H^s}^2\right)\\
			&+C_R\tau N^{2(s-r)}.
		\end{aligned}
	\end{equation}
	Applying Lemma \ref{lemmaD}, as in the proof of Theorem \ref{theorem5.6new}, gives
	\begin{equation}\label{6.26}
		\max_{0\le j\le n+1}\|\theta^j\|_{H^s}
		\le C_\theta N^{s-r}.
	\end{equation}
	
	Choose $\tau_0$ below the threshold in Theorem \ref{theorem5.6new}, the thresholds in Proposition \ref{proposition6.3} corresponding to the input radii $K+2$ and $R$, and the threshold in Lemma \ref{lemmaD}. Choose $N_0$ sufficiently large that $\max\{C_\eta,C_\theta\}N_0^{s-r}\le1$. Then \eqref{6.26} closes the induction, and the Galerkin solution is defined at every time level. Combining \eqref{6.18}, \eqref{6.20}, and \eqref{6.26} proves \eqref{6.24} and \eqref{galerkinhsbound}. The one-step uniqueness in Proposition \ref{proposition6.3}, applied with input radius $R$, proves uniqueness among all solution sequences satisfying \eqref{galerkinhsbound}.
	
	It remains to prove \eqref{6.25}. By \eqref{galerkinhsbound}, the $H^r$-contractivity of $P_N$, and the Sobolev--Moser argument used to derive \eqref{5.27},
	\begin{equation}\label{6.27}
		\left\|P_N\mathscr G(U_{\delta,N}^{n+1},U_{\delta,N}^n)\right\|_{H^r}
		\le C_R\left(\|U_{\delta,N}^{n+1}\|_{H^r}+\|U_{\delta,N}^n\|_{H^r}\right).
	\end{equation}
	Using \eqref{6.17}, the operator estimates \eqref{5.4}--\eqref{5.5} in Lemma \ref{lemmaB}, and \eqref{6.27}, and decreasing $\tau_0$ if necessary, gives
	\begin{equation}\label{6.28}
		\|U_{\delta,N}^{n+1}\|_{H^r}
		\le(1+C_R\tau)\|U_{\delta,N}^n\|_{H^r}.
	\end{equation}
	Iterating \eqref{6.28}, using $U_{\delta,N}^0=P_Nu_{\delta,0}$, the $H^r$-contractivity of $P_N$, and \eqref{4.20} in Theorem \ref{theorem4.6}, proves \eqref{6.25}.
\end{proof}

\subsection{Fourier collocation approximation}
In this subsection, we consider the Fourier collocation scheme \eqref{3.22}, which is the fully discrete method used in the computation. We first prove its one-step solvability and grid conservation laws, and then reconstruct the grid solution as a trigonometric polynomial and compare it with the Galerkin solution obtained in Theorem \ref{theorem6.5new}.
\begin{proposition}[Collocation solvability and conservation]\label{proposition6.6}
	For every $R>0$, there exists $\tau_R>0$, independent of $\delta$ and $N$, such that if $\|U^n\|_{s,h}\le R, 0<\tau\le\tau_R$, then the $n$-th step of \eqref{3.22} has a unique solution in
	\begin{equation}
		B_{R+1}^h:=\left\{V:\|V\|_{s,h}\le R+1\right\}.
	\end{equation}
	Moreover, every solution sequence of \eqref{3.22} satisfies the following mass and energy conservation laws:
	\begin{equation}\label{6.29}
		M_h(U^{n+1})=M_h(U^n), \quad E_{\delta,h}(U^{n+1})=E_{\delta,h}(U^n).
	\end{equation}
\end{proposition}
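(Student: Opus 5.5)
The plan is to transfer the argument of Proposition \ref{proposition6.3} to the grid by means of the trigonometric reconstruction $\mathcal I_N$, which by \eqref{6.3} is an isometry from grid functions with $\|\cdot\|_{s,h}$ onto $X_N$ with $\|\cdot\|_{H^s}$. For solvability, I would rewrite one step of \eqref{3.22} as the grid fixed-point problem
\begin{equation*}
	W=\mathcal T^h_n(W):=\mathcal C^h_{\delta,\tau}U^n+\mathcal S^h_{\delta,\tau}\mathscr G_h(W,U^n),
\end{equation*}
where $\mathcal S^h_{\delta,\tau}=(\frac{\mathrm i}{\tau}I+\frac12\mathcal L^c_{\delta,N})^{-1}$ and $\mathcal C^h_{\delta,\tau}=\mathcal S^h_{\delta,\tau}(\frac{\mathrm i}{\tau}I-\frac12\mathcal L^c_{\delta,N})$. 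By \eqref{3.27}, these operators are diagonal in the discrete Fourier basis, with the same symbols as in Lemma \ref{lemmaB}. Hence, by \eqref{6.3}, $\|\mathcal S^h_{\delta,\tau}V\|_{s,h}\le\tau\|V\|_{s,h}$ and $\|\mathcal C^h_{\delta,\tau}V\|_{s,h}=\|V\|_{s,h}$.

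Next, for $W,Z\in B^h_{R+1}$, I would apply \eqref{6.10} in Lemma \ref{lemma6.2} with radius $2(2R+1)$. Together with $\mathscr G_h(0,0)=0$, this gives
\begin{equation*}
	\|\mathcal T^h_n(W)\|_{s,h}\le R+\tau K_R,\qquad \|\mathcal T^h_n(W)-\mathcal T^h_n(Z)\|_{s,h}\le\tau C_R\|W-Z\|_{s,h},
\end{equation*}
where $K_R$ and $C_R$ are independent of $\delta$ and $N$, because Lemma \ref{lemma6.2} and Lemma \ref{lemmaB} are. Choosing $\tau_RK_R\le1$ and $\tau_RC_R<1$, the Banach fixed-point theorem then gives existence and uniqueness in $B^h_{R+1}$. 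The only point that needs care is that the constant in \eqref{6.10} is $N$-uniform. This is exactly what Lemma \ref{lemma6.2} provides, through the uniform $H^s$-boundedness of $I_N$ for $s>d/2$.

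For mass conservation, I would take the $\langle\cdot,\cdot\rangle_h$ inner product of \eqref{3.22} with $U^{n+\frac12}$ and take imaginary parts. The left side gives $\operatorname{Re}\langle D_\tau U^n,U^{n+\frac12}\rangle_h=(M_h(U^{n+1})-M_h(U^n))/(2\tau)$. On the right, $\langle-\mathcal L^c_{\delta,N}U^{n+\frac12},U^{n+\frac12}\rangle_h$ is real by the self-adjointness shown after \eqref{3.20}. The nonlinear term $\frac1{M^d}\sum_jG(|U^{n+1}_j|^2,|U^n_j|^2)|U^{n+\frac12}_j|^2$ is also real, since $G$ is real-valued. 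Hence the difference of masses vanishes.

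For energy conservation, I would take the inner product with $D_\tau U^n$ and take real parts. The left side $\mathrm i\|D_\tau U^n\|_h^2$ is purely imaginary, so its real part is zero. Self-adjointness gives $\operatorname{Re}\langle-\mathcal L^c_{\delta,N}U^{n+\frac12},D_\tau U^n\rangle_h=\frac1{2\tau}\bigl(\langle-\mathcal L^c U^{n+1},U^{n+1}\rangle_h-\langle-\mathcal L^c U^n,U^n\rangle_h\bigr)$. Pointwise, $\operatorname{Re}\bigl(U^{n+\frac12}_j\overline{D_\tau U^n_j}\bigr)=(|U^{n+1}_j|^2-|U^n_j|^2)/(2\tau)$, so by \eqref{3.2} the nonlinear term equals $\frac1{2\tau M^d}\sum_j\bigl(F(|U^{n+1}_j|^2)-F(|U^n_j|^2)\bigr)$. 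This identity also holds in the case $|U^{n+1}_j|=|U^n_j|$, where both sides vanish. Summing the contributions gives $E_{\delta,h}(U^{n+1})=E_{\delta,h}(U^n)$ by \eqref{3.24}. None of these steps is a real obstacle; the main care point is the $N$-uniformity of the contraction constants.
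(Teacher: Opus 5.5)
Your proposal is correct and follows essentially the same route as the paper. The paper also uses the grid fixed-point map $\mathcal T^h_n$ built from the Fourier-diagonal operators $\mathcal S^h_{\delta,\tau}$ and $\mathcal C^h_{\delta,\tau}$, combines the bounds \eqref{6.31} with the $N$-uniform Lipschitz estimate \eqref{6.10} to get a contraction, and tests \eqref{3.22} with $U^{n+\frac12}$ and $D_\tau U^n$ to obtain mass and energy conservation; you simply make explicit details the paper leaves implicit, namely the radius $2(2R+1)$ in \eqref{6.10} and the pointwise difference-quotient identity.
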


\begin{proof}
	Define
	\begin{equation}
		\mathcal S_{\delta,\tau}^h=\left(\frac{\mathrm i}{\tau}I+\frac12\mathcal L_{\delta,N}^{c}\right)^{-1},
		\qquad
		\mathcal C_{\delta,\tau}^h=\mathcal S_{\delta,\tau}^h\left(\frac{\mathrm i}{\tau}I-\frac12\mathcal L_{\delta,N}^{c}\right).
	\end{equation}
	These grid operators have the same Fourier symbols on $K_N$ as the operators in Lemma \ref{lemmaB}. Hence, by the proof of Lemma \ref{lemmaB} and the norm identity \eqref{6.3},
	\begin{equation}\label{6.31}
		\|\mathcal S_{\delta,\tau}^hV\|_{s,h}\le\tau\|V\|_{s,h},
		\qquad
		\|\mathcal C_{\delta,\tau}^hV\|_{s,h}=\|V\|_{s,h}.
	\end{equation}
	The $n$-th collocation step is equivalent to $W=\mathcal T_n^h(W)$, where
	\begin{equation}\label{6.32}
		\mathcal T_n^h(W)=\mathcal C_{\delta,\tau}^hU^n+\mathcal S_{\delta,\tau}^h\mathscr G_h(W,U^n).
	\end{equation}
	Estimate \eqref{6.10} in Lemma \ref{lemma6.2} and \eqref{6.31} show that, for $W,Z\in B_{R+1}^h$,
	\begin{equation}
		\|\mathcal T_n^h(W)\|_{s,h}\le R+\tau K_R,
		\qquad
		\|\mathcal T_n^h(W)-\mathcal T_n^h(Z)\|_{s,h}
		\le\tau C_R\|W-Z\|_{s,h}.
	\end{equation}
	Choose $\tau_R>0$ so that $\tau_RK_R\le1$ and $\tau_RC_R<1$. Then $\mathcal T_n^h$ maps $B_{R+1}^h$ into itself and is a contraction. The contraction mapping theorem proves the asserted one-step solvability.
	
	Finally, testing \eqref{3.22} with $U^{n+1/2}$ and taking imaginary parts proves mass conservation. Testing \eqref{3.22} with $D_\tau U^n$ and taking real parts, using the self-adjointness in \eqref{6.4} and the difference-quotient identity \eqref{3.2}, proves energy conservation. Thus \eqref{6.29} follows.
\end{proof}

To estimate the error introduced by Fourier collocation, we compare the collocation solution $U^n$ with the Fourier--Galerkin solution $U_{\delta,N}^n$. We first represent the grid function $U^n$ by its trigonometric interpolant
\begin{equation}\label{6.35}
	V_N^n:=\mathcal I_NU^n\in X_N.
\end{equation}
Applying $\mathcal I_N$ to the collocation scheme \eqref{3.22} and using the identities in \eqref{6.4}, we can obtain
\begin{equation}\label{6.36}
	\mathrm iD_\tau V_N^n=-\mathcal L_\delta V_N^{n+1/2}+I_N\mathscr G(V_N^{n+1},V_N^n).
\end{equation}
Thus, both $U_{\delta,N}^n$ and $V_N^n$ belong to $X_N$. Define their difference by
\begin{equation}\label{6.37}
	\zeta^n:=U_{\delta,N}^n-V_N^n.
\end{equation}

The following theorem establishes solvability at all time levels and uniform convergence of the Fourier collocation scheme.
\begin{theorem}[Uniform collocation approximation]\label{theorem6.8new}
	Let $u_\delta$ denote the solution of the nonlocal NLS model \eqref{2.5}. Then there exist $\tau_0>0$, $N_0\ge1$, $C>0$, and $R>0$, independent of $\delta$, $\tau$, and $N$, such that, for $0<\delta\le\delta_0$, $0<\tau\le\tau_0$, and $N\ge N_0$, the Fourier collocation scheme \eqref{3.22} has a solution sequence $\{U^n\}_{n=0}^{N_T}$ satisfying
	\begin{equation}\label{6.44}
		\max_{0\le n\le N_T}\|U^n\|_{s,h}\le R
	\end{equation}
	and
	\begin{equation}\label{6.45}
		\max_{0\le n\le N_T}
		\left\|
		u_\delta(t_n)-\mathcal I_NU^n
		\right\|_{H^s}
		\le
		C\left(
		\tau^2+N^{s-r}
		\right).
	\end{equation}
	Moreover, the solution is unique among all solution sequences of \eqref{3.22} with the prescribed initial value that satisfy \eqref{6.44}.
\end{theorem}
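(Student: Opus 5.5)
The plan is to run the same time-level induction as in Theorems \ref{theorem5.6new} and \ref{theorem6.5new}, now comparing the reconstructed collocation solution $V_N^n=\mathcal I_NU^n$ with the Galerkin solution $U_{\delta,N}^n$. The final estimate \eqref{6.45} then follows from the triangle inequality
\begin{equation*}
\|u_\delta(t_n)-V_N^n\|_{H^s}\le\|u_\delta(t_n)-U_\delta^n\|_{H^s}+\|U_\delta^n-U_{\delta,N}^n\|_{H^s}+\|\zeta^n\|_{H^s}.
\end{equation*}
The first two terms are bounded by $C\tau^2$ and $CN^{s-r}$ through \eqref{5.21} and \eqref{6.24}. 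It therefore remains to prove $\max_n\|\zeta^n\|_{H^s}\le CN^{s-r}$.

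\emph{Initial error.} The initial values satisfy $V_N^0=I_Nu_{\delta,0}$ and $U_{\delta,N}^0=P_Nu_{\delta,0}$. Hence \eqref{6.5} with $q=s$, $\mu=r$, together with \eqref{4.20}, gives $\|\zeta^0\|_{H^s}\le CN^{s-r}$.

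\emph{Induction hypothesis.} Let $K$ be the bound from \eqref{galerkinhsbound}, and set $R=K+2$. Assume $U^0,\dots,U^n$ have been constructed with $\max_{j\le n}\|\zeta^j\|_{H^s}\le1$. Then $\|U^j\|_{s,h}=\|V_N^j\|_{H^s}\le K+1$, and Proposition \ref{proposition6.6} yields $U^{n+1}$ with $\|U^{n+1}\|_{s,h}\le R$, provided $\tau$ is small.

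\emph{Error equation.} Subtracting \eqref{6.36} from \eqref{3.13} gives
\begin{equation*}
\mathrm iD_\tau\zeta^j=-\mathcal L_\delta\zeta^{j+1/2}+Q^{j+1/2},
\end{equation*}
where the source term splits as
\begin{equation*}
Q^{j+1/2}=I_N\bigl[\mathscr G(U_{\delta,N}^{j+1},U_{\delta,N}^j)-\mathscr G(V_N^{j+1},V_N^j)\bigr]+(P_N-I_N)\mathscr G(U_{\delta,N}^{j+1},U_{\delta,N}^j).
\end{equation*}

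\emph{Bounding the two parts of $Q^{j+1/2}$.}
\begin{itemize}
\item The first bracket is a difference of nonlinearities. Since $I_N$ is bounded on $H^s$ (Lemma \ref{lemma6.1} with $q=\mu=s$), estimate \eqref{5.1} bounds it by $C_R(\|\zeta^{j+1}\|_{H^s}+\|\zeta^j\|_{H^s})$. Equivalently, one can apply \eqref{6.10} to the grid restrictions of the Galerkin iterates.
\item The second term is the aliasing error. It is handled by \eqref{6.11}, whose right-hand side involves $\|U_{\delta,N}^j\|_{H^r}$, and this is uniformly bounded by \eqref{6.25}. This gives $C N^{s-r}$.
\end{itemize}
This aliasing bound is the key point. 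It is precisely why the uniform $H^r$ bound on the Galerkin solution, rather than on the collocation solution, was established beforehand. It is also why the comparison is made against $U_{\delta,N}$ rather than directly against $U_\delta$, since no $H^r$ control of $V_N^n$ is available.

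\emph{Closing the induction.} Lemma \ref{lemmaC} with $L=C_R$ and $\varepsilon_j=CN^{s-r}$ applies, because $\mathcal L_\delta$ is self-adjoint in $H^s$ on $X_N$ by \eqref{4.4}. Lemma \ref{lemmaD}, with $a_0\le CN^{2(s-r)}$, then yields $\max_{j\le n+1}\|\zeta^j\|_{H^s}\le C_\zeta N^{s-r}$. Choosing $N_0$ with $C_\zeta N_0^{s-r}\le1$, and choosing $\tau_0$ below the thresholds of Theorem \ref{theorem6.5new}, Proposition \ref{proposition6.6} (at radii $K+1$ and $R$) and Lemma \ref{lemmaD}, closes the induction.

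\emph{Bound \eqref{6.44} and uniqueness.} The bound \eqref{6.44} follows from $\|U^n\|_{s,h}\le K+1\le R$. Uniqueness among sequences obeying \eqref{6.44} follows step by step from the one-step uniqueness in Proposition \ref{proposition6.6} with input radius $R$.

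The main obstacle is the verification that the constants in \eqref{6.10}/\eqref{5.1} and \eqref{6.11} stay independent of $N$ and $\delta$. This relies only on the uniform $H^s$ radius and on the uniform boundedness of $I_N$ on $H^s$ for $s>d/2$, both already available.
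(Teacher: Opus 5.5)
Your proposal is correct and follows essentially the same route as the paper. It uses the same triangle inequality through $U_\delta^n$ and $U_{\delta,N}^n$, and the same split of the source term into an interpolated Lipschitz difference plus the aliasing term $(P_N-I_N)\mathscr G(U_{\delta,N}^{n+1},U_{\delta,N}^n)$, the latter controlled by \eqref{6.11} and the Galerkin $H^r$ bound \eqref{6.25}; it closes the same way, via Lemmas \ref{lemmaC}--\ref{lemmaD} and the one-step uniqueness of Proposition \ref{proposition6.6}. The only bookkeeping to add is that $N_0$ must also be at least the threshold of Theorem \ref{theorem6.5new}, since you invoke \eqref{6.24} and \eqref{6.25}.
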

\begin{proof}
	We follow the time-level induction in Theorems \ref{theorem5.6new} and \ref{theorem6.5new}, and record only the interpolation and aliasing terms specific to collocation. By \eqref{6.25} in Theorem \ref{theorem6.5new} and $H^r\hookrightarrow H^s$, there exists $K>0$, independent of $\delta$, $\tau$, and $N$, such that $\max_{0\le j\le N_T}\|U_{\delta,N}^j\|_{H^s}\le K$. Set $R=K+2$ and $R_*=2K+2$. From the initial conditions, $\zeta^0=P_Nu_{\delta,0}-I_Nu_{\delta,0}$. Hence \eqref{6.5} in Lemma \ref{lemma6.1} and \eqref{4.20} in Theorem \ref{theorem4.6} give
	\begin{equation}\label{6.38}
		\|\zeta^0\|_{H^s}\le CN^{s-r}.
	\end{equation}
	
	Suppose that $U^0,\ldots,U^n$ have been constructed and $\max_{0\le j\le n}\|\zeta^j\|_{H^s}\le1$. By \eqref{6.3} and \eqref{6.37}, $\|U^n\|_{s,h}=\|V_N^n\|_{H^s}\le K+1$, so Proposition \ref{proposition6.6} constructs $U^{n+1}$ with $\|V_N^{n+1}\|_{H^s}\le R$ for sufficiently small $\tau$. Subtracting \eqref{6.36} from \eqref{3.13} gives
	\begin{equation}\label{6.41}
		\begin{aligned}
			\mathrm iD_\tau\zeta^n={}&-\mathcal L_\delta\zeta^{n+1/2}
			+I_N\!\left[\mathscr G(U_{\delta,N}^{n+1},U_{\delta,N}^n)-\mathscr G(V_N^{n+1},V_N^n)\right]\\
			&+(P_N-I_N)\mathscr G(U_{\delta,N}^{n+1},U_{\delta,N}^n).
		\end{aligned}
	\end{equation}
	The bounds above give $\|U_{\delta,N}^{n+1}\|_{H^s}+\|U_{\delta,N}^{n}\|_{H^s}+\|V_N^{n+1}\|_{H^s}+\|V_N^{n}\|_{H^s}\le4K+3<2R_*$. Therefore, applying \eqref{6.10}--\eqref{6.11} in Lemma \ref{lemma6.2} with radius $2R_*$, together with \eqref{6.3}--\eqref{6.4} and \eqref{6.25} in Theorem \ref{theorem6.5new}, gives
	\begin{equation}
		\begin{aligned}
			&\left\|I_N\!\left[\mathscr G(U_{\delta,N}^{n+1},U_{\delta,N}^n)-\mathscr G(V_N^{n+1},V_N^n)\right]\right\|_{H^s}
			\le C_{R_*}\bigl(\|\zeta^{n+1}\|_{H^s}+\|\zeta^n\|_{H^s}\bigr),\\
			&\left\|(P_N-I_N)\mathscr G(U_{\delta,N}^{n+1},U_{\delta,N}^n)\right\|_{H^s}
			\le CN^{s-r}.
		\end{aligned}
	\end{equation}
	Applying Lemma \ref{lemmaC} to \eqref{6.41} yields
	\begin{equation}\label{6.40}
		\|\zeta^{n+1}\|_{H^s}^2-\|\zeta^n\|_{H^s}^2
		\le C_{R_*}\tau\bigl(\|\zeta^{n+1}\|_{H^s}^2+\|\zeta^n\|_{H^s}^2\bigr)+C\tau N^{2(s-r)}.
	\end{equation}
	Applying Lemma \ref{lemmaD} with $a_j=\|\zeta^j\|_{H^s}^2$ and $\varepsilon=N^{s-r}$, using \eqref{6.38} and \eqref{6.40}, gives constants $\tau_\zeta,C_\zeta>0$ independent of $\delta$, $\tau$, and $N$. Choose $\tau_0$ no larger than the threshold in Theorem \ref{theorem6.5new}, the thresholds in Proposition \ref{proposition6.6} corresponding to the input radii $K+1$ and $R$, and $\tau_\zeta$. Choose $N_0$ no smaller than the threshold in Theorem \ref{theorem6.5new} and sufficiently large that $\max\{C,C_\zeta\}N_0^{s-r}\le1$, where $C$ is the constant in \eqref{6.38}. The time-level induction used in Theorems \ref{theorem5.6new} and \ref{theorem6.5new} then gives
	\begin{equation}\label{6.46}
		\max_{0\le n\le N_T}\|\zeta^n\|_{H^s}\le C_\zeta N^{s-r}.
	\end{equation}
	By \eqref{6.3}, \eqref{6.37}, \eqref{6.46}, and the definition of $K$, $\max_n\|U^n\|_{s,h}\le K+1\le R$, which proves \eqref{6.44}. Since $\tau_0$ is below the one-step threshold in Proposition \ref{proposition6.6} for input radius $R$, the same proposition proves uniqueness among all solution sequences satisfying \eqref{6.44}. Finally,
	\begin{equation}
		\|u_\delta(t_n)-\mathcal I_NU^n\|_{H^s}
		\le\|u_\delta(t_n)-U_\delta^n\|_{H^s}+\|U_\delta^n-U_{\delta,N}^n\|_{H^s}+\|\zeta^n\|_{H^s}.
	\end{equation}
	Combining \eqref{5.21} in Theorem \ref{theorem5.6new}, \eqref{6.24} in Theorem \ref{theorem6.5new}, and \eqref{6.46} proves \eqref{6.45}.
\end{proof}
\subsection{Asymptotic compatibility}

\begin{theorem}[Asymptotic compatibility]\label{theorem6.9}
	Assume the hypotheses of Theorem \ref{theorem4.6} hold and $s\in\mathbb N_0$, $s>d/2$, and $r\ge s+6$. Let $\tau_0$ and $N_0$ be the thresholds in Theorem \ref{theorem6.8new}. For $0<\delta\le\delta_0$, $0<\tau\le\tau_0$, and $N\ge N_0$, let $u$ be the solution of the local NLS model \eqref{2.6}, and let $\{U^n\}_{n=0}^{N_T}$ be the solution of the Fourier collocation scheme \eqref{3.22} for the nonlocal NLS model. Then there exists $C>0$, independent of $\delta$, $\tau$, and $N$, such that
	\begin{equation}\label{6.47}
		\max_{0\le n\le N_T}
		\left\|
		u(t_n)-\mathcal I_NU^n
		\right\|_{H^s}
		\le
		C\left(
		\delta^2+\tau^2+N^{s-r}
		\right).
	\end{equation}
\end{theorem}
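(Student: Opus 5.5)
The estimate \eqref{6.47} follows from a triangle inequality at each time level $t_n$:
\begin{equation*}
	\|u(t_n)-\mathcal I_NU^n\|_{H^s}\le\|u(t_n)-u_\delta(t_n)\|_{H^s}+\|u_\delta(t_n)-\mathcal I_NU^n\|_{H^s}.
\end{equation*}
The first term is the modelling error of the nonlocal-to-local limit. The second term is the discretization error of the collocation scheme applied to the nonlocal model.

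For the first term, since $r\ge s+6>s$, the embedding $H^r\hookrightarrow H^s$ gives $\|\cdot\|_{H^s}\le\|\cdot\|_{H^r}$ from the Fourier characterization \eqref{Hnorm}. Theorem \ref{theorem4.6} then yields
\begin{equation*}
	\sup_{0\le t\le T}\|u(t)-u_\delta(t)\|_{H^s}\le C\delta^2,\qquad 0<\delta\le\delta_0.
\end{equation*}
Its hypotheses \eqref{4.17}--\eqref{4.18} with $r>d/2$ are assumed in the theorem.

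For the second term, the hypotheses of Corollary \ref{corollary4.7} hold because $s\in\mathbb N_0$, $s>d/2$ and $r\ge s+6$. Theorem \ref{theorem6.8new} therefore applies for $0<\tau\le\tau_0$ and $N\ge N_0$. It yields the bound $C(\tau^2+N^{s-r})$ in \eqref{6.45}, with $C$ independent of $\delta$, $\tau$ and $N$. Adding the two bounds and taking the maximum over $n$ gives \eqref{6.47}.

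The only point requiring care is uniformity, which is the main (mild) obstacle. The constant and thresholds in Theorem \ref{theorem6.8new} must not depend on $\delta$, and the constant in Theorem \ref{theorem4.6} must not depend on $\tau$ or $N$. Both facts are asserted in the respective results. The $\delta$-uniform regularity of $u_\delta$ from Corollary \ref{corollary4.7} propagates through Theorems \ref{theorem5.6new}, \ref{theorem6.5new} and \ref{theorem6.8new}, and this is what makes the discretization constant $\delta$-independent. Consequently no coupling condition among $\delta$, $\tau$ and $N$ is needed, which is exactly the asymptotic compatibility claim.
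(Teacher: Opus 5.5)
Your proposal is correct and follows essentially the same route as the paper. The paper also splits the error with the triangle inequality through $u_\delta(t_n)$, then combines \eqref{4.19} from Theorem \ref{theorem4.6} (via $H^r\hookrightarrow H^s$) with the $\delta$-uniform collocation bound \eqref{6.45} from Theorem \ref{theorem6.8new}.
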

\begin{proof}
	Let $u_\delta$ be the nonlocal solution in Theorem \ref{theorem4.6}. Then, by the triangle inequality,
	\begin{equation}
		\left\|u(t_n)-\mathcal I_NU^n\right\|_{H^s}
		\le
		\|u(t_n)-u_\delta(t_n)\|_{H^s}+\left\|u_\delta(t_n)-\mathcal I_NU^n\right\|_{H^s}.
	\end{equation}
	Then the proof is complete by the combination of \eqref{4.19} in Theorem \ref{theorem4.6}, the embedding $H^r\hookrightarrow H^s$, and \eqref{6.45} in Theorem \ref{theorem6.8new}.
\end{proof}
\begin{remark}
	Theorem \ref{theorem6.9} holds without any coupling condition among $\delta$, $\tau$, and $N$. Hence the collocation solution converges to the local solution along any sequence for which $\delta\to0$, $\tau\to0$, and $N\to\infty$.
\end{remark}

Finally, we compare the nonlocal grid energy preserved by the collocation scheme with the continuous energy of the local NLS equation.
\begin{proposition}[Convergence of the conserved grid energy]
	Under the assumptions of Theorems \ref{theorem4.6} and \ref{theorem6.8new}, let $\{U^n\}_{n=0}^{N_T}$ be the solution of the Fourier collocation scheme \eqref{3.22}.
	For $0<\delta\le\delta_0$, $0<\tau\le\tau_0$, and $N\ge N_0$, there exists $C>0$, independent of $\delta$, $\tau$, and $N$, such that
	\begin{equation}\label{6.48}
		\max_{0\le n\le N_T}\left|E_{\delta,h}(U^n)-E_0(u(t_n))\right|\le
		C\left(\delta^2+N^{s-r}\right).
	\end{equation}
\end{proposition}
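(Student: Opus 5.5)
The estimate \eqref{6.48} has no $\tau^2$ term, and this is what drives the plan. Both energies in it are conserved: $E_{\delta,h}(U^n)=E_{\delta,h}(U^0)$ by Proposition \ref{proposition6.6}, and $E_0(u(t_n))=E_0(u_0)$ by the classical energy conservation for the local NLS \eqref{2.6}. That conservation holds for the regular solution \eqref{4.17} by the same argument as in Theorem \ref{theorem4.4}, with $a_\delta$ replaced by $\|\nabla\cdot\|_{L^2}^2$. It therefore suffices to bound the single number $|E_{\delta,h}(U^0)-E_0(u_0)|$, with $U^0_j=u_{\delta,0}(x_j)$. I split it as
\begin{equation*}
|E_{\delta,h}(U^0)-E_\delta(u_{\delta,0})|+|E_\delta(u_{\delta,0})-E_0(u_0)|.
\end{equation*}
The second term is at most $C\delta^2$ by Corollary \ref{corollary 4.8} at $t=0$.

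For the first term I treat the quadratic and potential parts separately. Since $\widehat{U^0}_k=\widehat{I_Nu_{\delta,0}}_k$, the quadratic part of $E_{\delta,h}(U^0)$ equals $\frac12a_\delta(I_Nu_{\delta,0},I_Nu_{\delta,0})$ by \eqref{3.24} and \eqref{avvpar}. Sesquilinearity together with \eqref{shuangxiannxing} then gives
\begin{equation*}
\bigl|a_\delta(I_Nu_{\delta,0},I_Nu_{\delta,0})-a_\delta(u_{\delta,0},u_{\delta,0})\bigr|\le\|u_{\delta,0}-I_Nu_{\delta,0}\|_{H^1}\bigl(\|I_Nu_{\delta,0}\|_{H^1}+\|u_{\delta,0}\|_{H^1}\bigr).
\end{equation*}
Applying \eqref{6.5} with $q=1\le s$ and $\mu=r$, and using \eqref{4.20}, bounds this by $CN^{1-r}\le CN^{s-r}$ (recall $s>d/2$ forces $s\ge1$).

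For the potential part, the grid sum $\frac1{M^d}\sum_jF(|u_{\delta,0}(x_j)|^2)$ is exactly the trapezoidal rule applied to $g=F(|u_{\delta,0}|^2)$. It therefore equals $\widehat{(I_Ng)}_0$, while the continuous integral equals $\widehat g_0$, so the difference is $|\widehat{(P_N-I_N)g}_0|\le\|(P_N-I_N)g\|_{L^2}$. By \eqref{6.5} with $q=0$ this is at most $CN^{-r}\|g\|_{H^r}$. I would bound $\|g\|_{H^r}$ by Lemma \ref{lemmaA}, applied with $\Phi(z_1,z_2)=F(z_1^2+z_2^2)$ and the uniform $H^r$ bound \eqref{4.20}.

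This last step is the main obstacle, because the composition estimate at level $r$ needs $F\in C^{r+3}$ while \eqref{N1} only gives $F\in C^{r+4}$; that is fine, but one must verify the hypotheses of Lemma \ref{lemmaA} at $q=r$. If that proves awkward, I would fall back to level $q=s$, using the cruder bound $CN^{-s}\|g\|_{H^s}$, or use the aliasing formula in the proof of Lemma \ref{lemma6.1} directly. Any of these routes gives at least $N^{s-r}$ once the $r$-regularity is invoked. Collecting the terms yields $C(\delta^2+N^{s-r})$, with constants independent of $n$, $\tau$ and $\delta$ because only initial data and conservation enter. This explains the absence of $\tau^2$ in \eqref{6.48}.
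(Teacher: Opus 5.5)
Your proposal is correct and is essentially the paper's proof. It uses the same splitting through $E_\delta(u_{\delta,0})$, the same bound on the quadratic part via \eqref{shuangxiannxing} and \eqref{6.5} with $q=1$, and the same quadrature bound via Lemma~\ref{lemmaA} at level $r$ and \eqref{6.5} with $q=0$, $\mu=r$. The one difference is how you transfer the $\delta^2$ term: you use local energy conservation plus Corollary~\ref{corollary 4.8} at $t=0$, while the paper uses the nonlocal conservation law of Theorem~\ref{theorem4.4} with Corollary~\ref{corollary 4.8} at $t=t_n$, which spares it from justifying conservation of $E_0$.

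One caveat: your fallback at level $q=s$ would only give $O(N^{-s})$, which is not $O(N^{s-r})$ when $r>2s$ (e.g.\ $s=2$, $r=8$). The fallback is unnecessary, though, since Lemma~\ref{lemmaA} applies at $q=r$, as you checked.
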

\begin{proof}
	Set $v_N^0=\mathcal I_NU^0=I_Nu_{\delta,0}$. Since $s\in\mathbb N_0$ and $s>d/2$, we have $s\ge1$. By \eqref{6.5} in Lemma \ref{lemma6.1} and \eqref{4.20} in Theorem \ref{theorem4.6},
	\begin{equation}
		\|v_N^0-u_{\delta,0}\|_{H^1}\le CN^{1-r}\le CN^{s-r},
		\qquad
		\|v_N^0\|_{H^1}+\|u_{\delta,0}\|_{H^1}\le C.
	\end{equation}
	Hence \eqref{shuangxiannxing} gives
	\begin{equation}
		\left|a_\delta(v_N^0,v_N^0)-a_\delta(u_{\delta,0},u_{\delta,0})\right|\le CN^{s-r}.
	\end{equation}
	
	Moreover, applying \eqref{A1} in Lemma \ref{lemmaA} with $q=r$ and $w=0$ to $\Phi(z)=F(|z|^2)$, using $F(0)=0$ and \eqref{4.20} in Theorem \ref{theorem4.6}, and then applying \eqref{6.5} in Lemma \ref{lemma6.1} with $q=0$ and $\mu=r$ gives
	\begin{equation}
		\left|\frac1{M^d}\sum_{j\in\mathbb J_M}F(|u_{\delta,0}(x_j)|^2)
		-\frac1{(2\pi)^d}\int_{\mathbb T^d}F(|u_{\delta,0}|^2)\,\mathrm dx\right|
		\le CN^{s-r}.
	\end{equation}
	Consequently, \eqref{3.24} and \eqref{2.8} give
	$|E_{\delta,h}(U^0)-E_\delta(u_{\delta,0})|\le CN^{s-r}$.
	Using the grid conservation law \eqref{6.29} in Proposition \ref{proposition6.6}, the continuous conservation law \eqref{4.11} in Theorem \ref{theorem4.4}, and the energy limit \eqref{4.22} in Corollary \ref{corollary 4.8}, we obtain
	\begin{equation}
		\begin{aligned}
			|E_{\delta,h}(U^n)-E_0(u(t_n))|
			&\le |E_{\delta,h}(U^0)-E_\delta(u_{\delta,0})|
			+|E_\delta(u_\delta(t_n))-E_0(u(t_n))|\\
			&\le C\left(N^{s-r}+\delta^2\right).
		\end{aligned}
	\end{equation}
	Taking the maximum over $0\le n\le N_T$ proves \eqref{6.48}.
\end{proof}

\section{Numerical experiments}\label{sec7}
This section reports numerical results for the conservative Fourier collocation scheme for nonlocal NLS \eqref{3.22}. The nonlinear equation at each time step is solved by the fixed-point iteration \eqref{3.27}. All computations are performed using MATLAB R2025b. Unless stated otherwise, the stopping criterion for the nonlinear iteration is $10^{-13}$.
\subsection{Temporal accuracy}
We test temporal convergence for several horizons in dimensions $d=1,2,3$. We use the cubic nonlinearity
\begin{equation}
	f(|u|^2)u=|u|^2u,
\end{equation}
and the normalized constant kernel
\begin{equation}\label{7.1}
	\rho(z)=\frac{2(d+2)}{|B_1|},\quad z\in B_1.
\end{equation}
Setting $a=\delta|k|$, the Fourier multiplier of $-\mathcal L_\delta$ is given by
\begin{equation}\label{7.2}
	\lambda_\delta(k)
	=
	\begin{cases}
		\displaystyle\frac{6}{\delta^2}\left(1-\frac{\sin a}{a}\right),& d=1,\\
		\displaystyle\frac{8}{\delta^2}\left(1-\frac{2J_1(a)}{a}\right),& d=2,\\
		\displaystyle\frac{10}{\delta^2}\left(1-3\frac{\sin a-a\cos a}{a^3}\right),& d=3,
	\end{cases}
\end{equation}
where $J_1$ denotes the Bessel function of the first kind and the values at $a=0$ are understood by continuity. 

For $A\in\mathbb R$ and $k_0\in\mathbb Z^d$, the cubic nonlocal NLS equation has the plane-wave solution
\begin{equation}\label{7.3}
	u_\delta(x,t)=A\exp\!\left(\mathrm i\bigl(k_0\cdot x-\omega_\delta t\bigr)\right),
	\quad
	\omega_\delta=\lambda_\delta(k_0)+ A^2.
\end{equation}

We take $A=0.7$, $T=0.5$, $(M,k_0)=(17,(3))$, $(13,(2,2))$, and $(9,(1,2,2))$ for $d=1,2,3$, respectively, together with $\delta\in\{0.8,0.4,0.2,0.1\}$ and $\tau=(20\cdot2^j)^{-1}$, $j=0,\ldots,4$. Since $k_0\in K_N$ and the nonlinearity does not generate additional Fourier modes, the spatial error vanishes and this test isolates the temporal error. And the error is measured by
\begin{equation}\label{7.4}
	\mathcal E_{d,\delta}(\tau):=\max_{0\le n\le T/\tau}\left\|u_\delta(t_n)-\mathcal I_NU^n\right\|_{H^2}.
\end{equation}

Figure~ \ref{fig:temporal-convergence} shows second-order temporal convergence for every tested dimension and horizon.
\begin{figure}[htbp]
	\centering
	\includegraphics[width=0.92\textwidth]
	{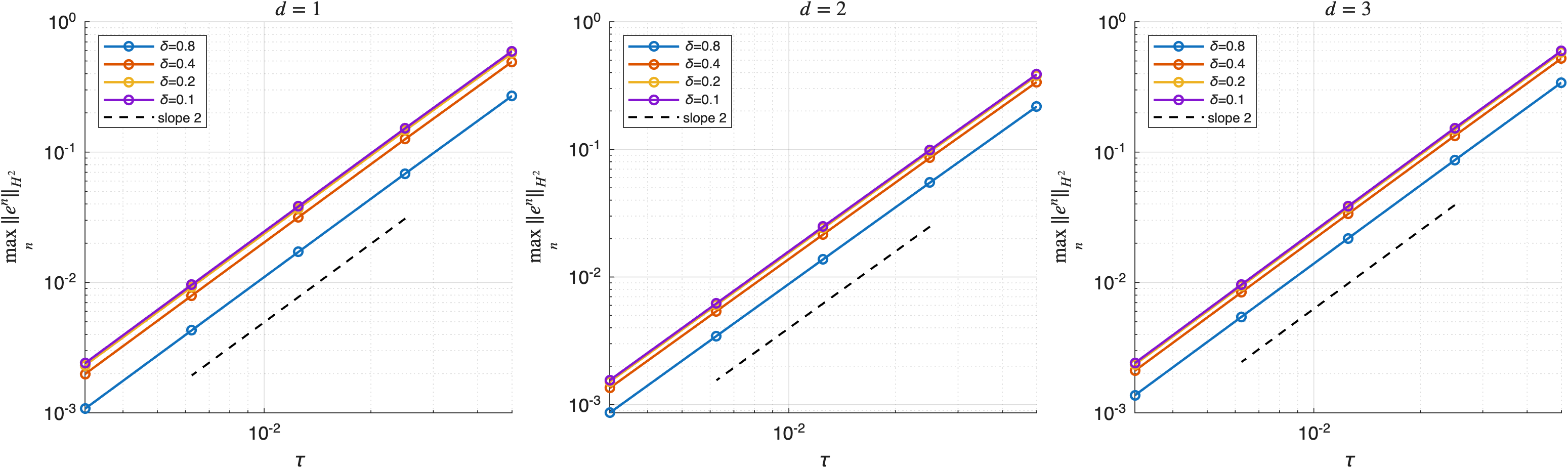}
	\caption{Temporal errors of the Fourier collocation scheme for
		$d=1,2,3$ and four values of $\delta$.}
	\label{fig:temporal-convergence}
\end{figure}

At $\tau=1/320$, the values of $\mathcal E_{d,\delta}(\tau)/\tau^2$ range from $88.276$ to $246.98$ over all tested dimensions and horizons, providing numerical evidence for the horizon-uniform temporal component of \eqref{6.45} in Theorem \ref{theorem6.8new}.

\subsection{Spatial accuracy}
We next test the spatial convergence for the smooth initial value
\begin{equation}\label{7.5}
	u_0(x)=0.8+\frac{0.2}{d}\sum_{\ell=1}^{d}\cos x_\ell+\frac{0.15\mathrm i}{d}\sum_{\ell=1}^{d}\sin(2x_\ell).
\end{equation}

We take $T=0.1$, $\tau=2.5\times10^{-4}$, and $\delta\in\{0.8,0.2,0.05\}$, and compare the numerical solution at $T$ with an over-resolved reference solution computed using the same time step. Figure \ref{fig:spatial-convergence} shows rapid decay of the $H^2$ error in all three dimensions and for all tested horizons, followed by a numerical error floor. This behavior is consistent with Fourier spectral convergence for the smooth solution and with the spatial term in \eqref{6.45} in Theorem \ref{theorem6.8new}.

\begin{figure}[htbp]
	\centering
	\includegraphics[width=0.92\textwidth]
	{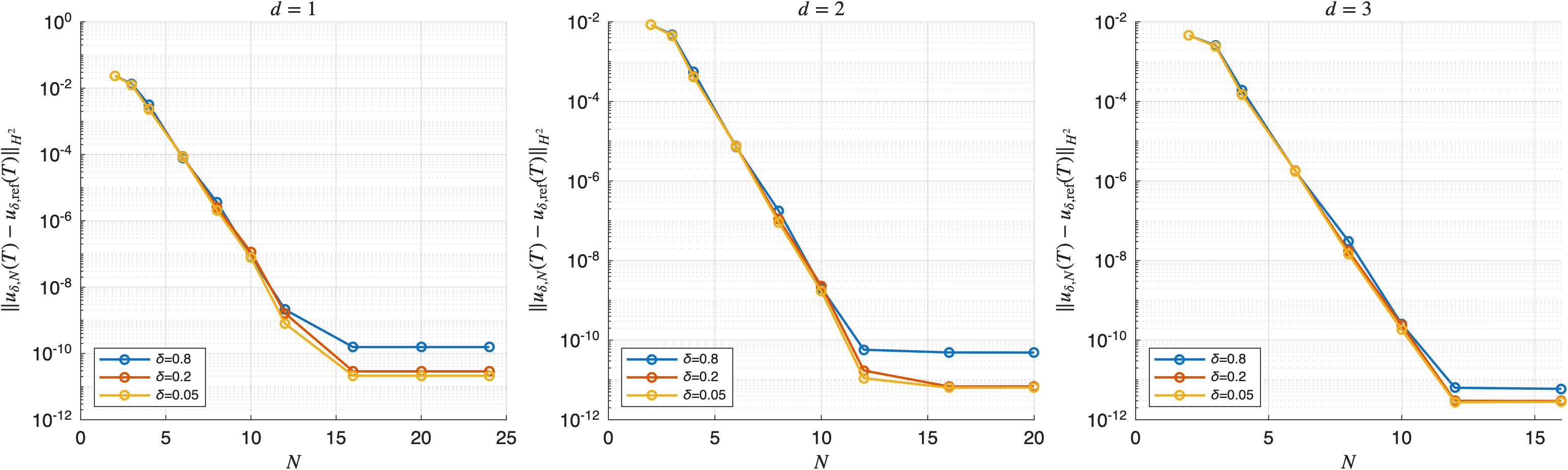}
	\caption{$H^2$ errors of the Fourier collocation scheme for the
		smooth multimode solution in dimensions $d=1,2,3$.}
	\label{fig:spatial-convergence}
\end{figure}

\subsection{Discrete mass and energy conservation}
We next examine the grid mass and energy conservation and the effect of the stopping tolerance in the nonlinear iteration. The initial value is
\begin{equation}\label{7.6}
	u_0(x)=\exp(0.35\cos x)\left(1+0.2\mathrm i\sin(2x)+0.08\cos(3x)\right).
\end{equation}

We use $M=129$, $\delta=0.8$, $T=100$, and $\tau=0.02$ for computation. Moreover, we consider the following cubic nonlinearity and the saturable nonlinearity
\begin{equation}
	f_{\mathrm{cub}}(s)=s,
	\quad
	f_{\mathrm{sat}}(s)=\frac{s}{1+0.7s},
	\quad s\geq0.
\end{equation}
For a nonlinear iteration tolerance $\varepsilon_{\mathrm{it}}$, define the relative defects at $t_n$ by
\begin{equation}
	\mathcal D_M^n=\frac{|M_h(U^n)-M_h(U^0)|}{M_h(U^0)},
	\quad
	\mathcal D_E^n=\frac{|E_{\delta,h}(U^n)-E_{\delta,h}(U^0)|}{1+|E_{\delta,h}(U^0)|}.
\end{equation}

Figure~\ref{fig:grid-conservation} shows the instantaneous relative defects over $5000$ time steps. At $\varepsilon_{\mathrm{it}}=10^{-14}$, the mass and energy defects remain below $10^{-13}$ for the cubic nonlinearity and below $2.1\times10^{-14}$ for the saturable nonlinearity.
\begin{figure}[htbp]
	\centering
	\includegraphics[width=0.92\textwidth]
	{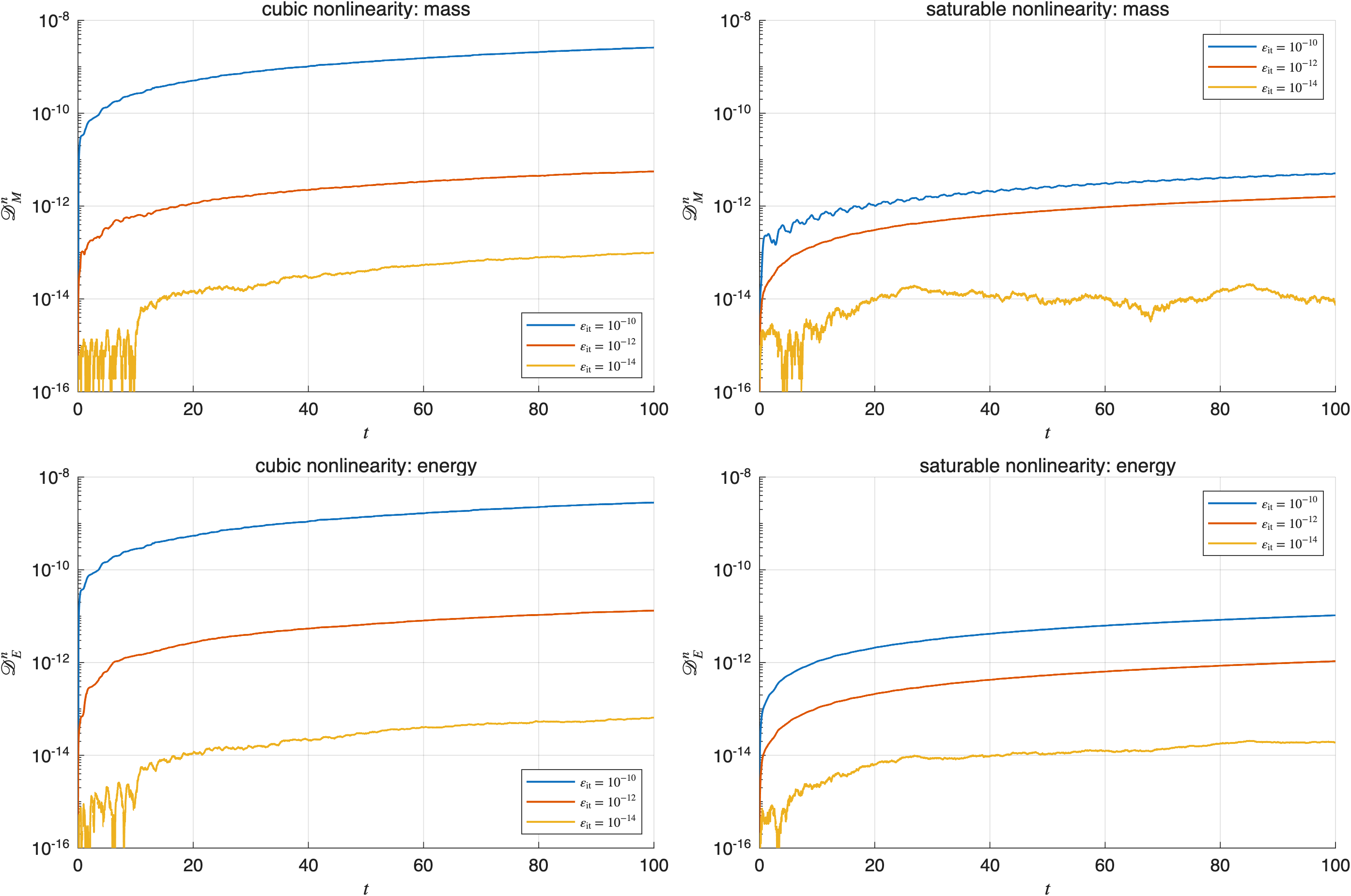}
	\caption{Instantaneous relative defects in the discrete mass and energy for the cubic
		and saturable nonlinearities over $0\le t\le100$.}
	\label{fig:grid-conservation}
\end{figure}

Then, define the maximum defects over the full time interval by $\mathcal D_M=\max_{0\leq n\leq N_T}\mathcal D_M^n$ and $\mathcal D_E=\max_{0\leq n\leq N_T}\mathcal D_E^n$. For each value of $\varepsilon_{\mathrm{it}}$, $R_{\max}$ is the largest discrete $L^2$ residual among all time steps in the cubic and saturable tests. Table \ref{tab:grid-conservation} reports $\mathcal D_M$, $\mathcal D_E$, and $R_{\max}$. As $\varepsilon_{\mathrm{it}}$ decreases, $R_{\max}$ becomes smaller and the mass and energy defects decrease accordingly. At the smallest tolerance, the defects reach the level of double-precision roundoff. These results are consistent with the exact discrete conservation laws in Proposition \ref{proposition6.6} and show that the observed conservation defects are governed by the stopping error of the nonlinear iteration.

\begin{table}[htbp]
	\centering
	\caption{Maximum conservation defects and one-step residuals over
		$0\le t\le100$.}
	\label{tab:grid-conservation}
	\small
	\setlength{\tabcolsep}{5pt}
	\renewcommand{\arraystretch}{0.95}
	\begin{tabular}{c cc cc c}
		\toprule
		& \multicolumn{2}{c}{Cubic}
		& \multicolumn{2}{c}{Saturable}
		& \\
		\cmidrule(lr){2-3}
		\cmidrule(lr){4-5}
		$\varepsilon_{\mathrm{it}}$
		& $\mathcal D_M$ & $\mathcal D_E$
		& $\mathcal D_M$ & $\mathcal D_E$
		& $R_{\max}$ \\
		\midrule
		$10^{-10}$
		& $2.5871\times10^{-9}$ & $2.8169\times10^{-9}$
		& $5.0738\times10^{-12}$ & $1.0435\times10^{-11}$
		& $1.4298\times10^{-10}$ \\
		$10^{-12}$
		& $5.5555\times10^{-12}$ & $1.3202\times10^{-11}$
		& $1.5972\times10^{-12}$ & $1.0697\times10^{-12}$
		& $3.0711\times10^{-12}$ \\
		$10^{-14}$
		& $9.9512\times10^{-14}$ & $6.5313\times10^{-14}$
		& $2.0980\times10^{-14}$ & $2.0593\times10^{-14}$
		& $3.3494\times10^{-14}$ \\
		\bottomrule
	\end{tabular}
\end{table}

\subsection{Nonlocal-to-local convergence}
In this numerical experiment, we examine the local limits of the continuous model, the Fourier collocation solution, and the conserved grid energy in one spatial dimension. We retain the cubic nonlinearity and the normalized constant kernel in \eqref{7.1}. For the continuous model, take $A=0.7$, $k_0=3$, and $T=0.5$. The nonlocal and local plane-wave solutions are
\begin{equation}\label{7.8}
	\begin{aligned}
		u_\delta(x,t)&=A\exp\!\left(\mathrm i\left[k_0x-\left(\lambda_\delta(k_0)+A^2\right)t\right]\right),\\
		u(x,t)&=A\exp\!\left(\mathrm i\left[k_0x-\left(k_0^2+A^2\right)t\right]\right).
	\end{aligned}
\end{equation}
The first error in Table \ref{tab:local-limits} is $\|u_\delta(T)-u(T)\|_{L^2}$, evaluated directly from \eqref{7.8}, thus it contains no temporal or spatial discretization error.

The nonlocal and local collocation schemes are solved on the same grid with $M=129$, $T=0.2$, and $\tau=5\times10^{-4}$ with the same initial function \eqref{7.6}. For the local computation, we solve the counterpart of \eqref{3.22} in which the nonlocal operator $-\mathcal L_{\delta,N}^{c}$ is replaced by the local Fourier collocation operator. Equivalently, its Fourier multiplier $\lambda_\delta(k)$ is replaced by $k^2$. We measure the final time error by
\begin{equation}
	\mathcal E_{\mathrm{col}}(\delta)=\left\|\mathcal I_NU_\delta^{N_T}-\mathcal I_NU_0^{N_T}\right\|_{H^2}.
\end{equation}

Moreover, since the local energy is conserved, $E_0(u(t_n))=E_0(u_0)$. The convergence of the nonlocal grid energy to the local continuous energy is therefore measured by
\begin{equation}
	\mathcal E_E(\delta)=\max_{0\leq n\leq N_T}\left|E_{\delta,h}(U_\delta^n)-E_0(u_0)\right|.
\end{equation}

Figure \ref{fig:local-limits} shows that the plane-wave, collocation, and energy errors are parallel to the reference lines of slope two. Correspondingly, the observed orders in Table \ref{tab:local-limits} approach two as $\delta$ decreases. The plane-wave result agrees with the continuous local-limit estimate in Theorem \ref{theorem4.6}; the collocation and energy results demonstrate the discrete local limit and the energy estimate \eqref{6.48}, respectively.
\begin{figure}[htbp]
	\centering
	\includegraphics[width=0.98\textwidth]
	{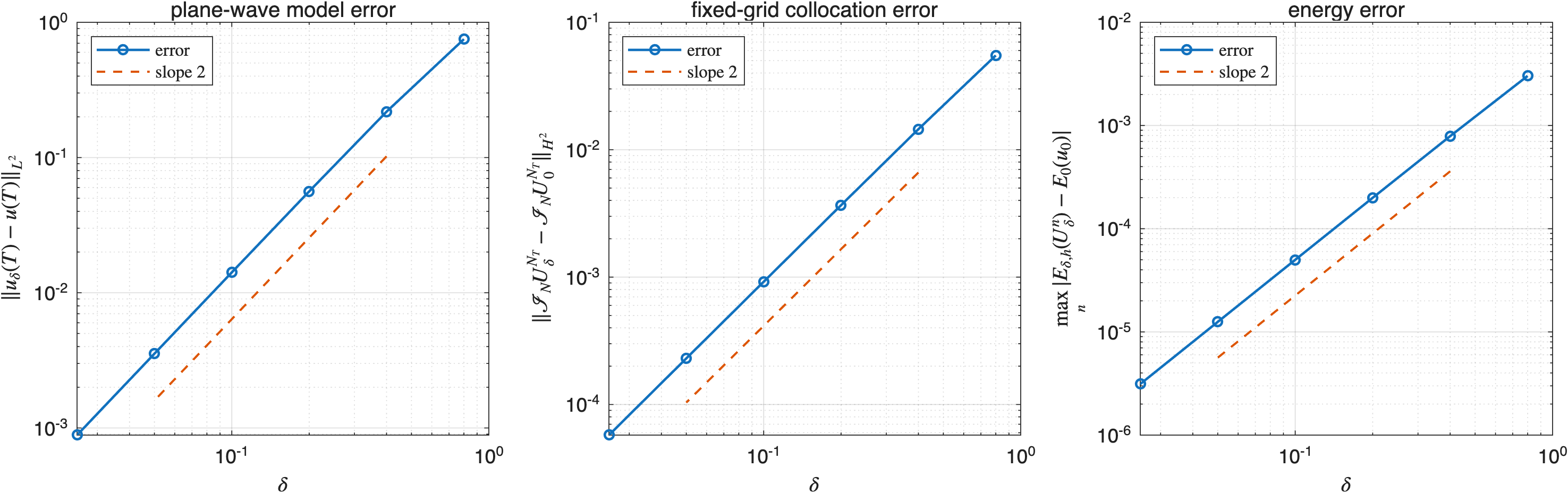}
	\caption{Nonlocal-to-local convergence of the plane-wave solution,
		the Fourier collocation solution, and the conserved grid energy.}
	\label{fig:local-limits}
\end{figure}

\begin{table}[htbp]
	\centering
	\caption{Errors and observed orders in the nonlocal-to-local limits.}
	\label{tab:local-limits}
	\small
	\setlength{\tabcolsep}{4pt}
	\renewcommand{\arraystretch}{0.95}
	\begin{tabular}{c cc cc cc}
		\toprule
		& \multicolumn{2}{c}{Plane wave}
		& \multicolumn{2}{c}{Collocation}
		& \multicolumn{2}{c}{Energy} \\
		\cmidrule(lr){2-3}\cmidrule(lr){4-5}\cmidrule(lr){6-7}
		$\delta$ & Error & Order & Error & Order & Error & Order \\
		\midrule
		$0.8$   & $7.5062\times10^{-1}$ & --
		& $5.4982\times10^{-2}$ & --
		& $3.0257\times10^{-3}$ & -- \\
		$0.4$   & $2.1828\times10^{-1}$ & $1.7819$
		& $1.4501\times10^{-2}$ & $1.9228$
		& $7.8882\times10^{-4}$ & $1.9395$ \\
		$0.2$   & $5.6201\times10^{-2}$ & $1.9575$
		& $3.6694\times10^{-3}$ & $1.9825$
		& $1.9930\times10^{-4}$ & $1.9848$ \\
		$0.1$   & $1.4144\times10^{-2}$ & $1.9904$
		& $9.1988\times10^{-4}$ & $1.9960$
		& $4.9956\times10^{-5}$ & $1.9962$ \\
		$0.05$  & $3.5418\times10^{-3}$ & $1.9977$
		& $2.3012\times10^{-4}$ & $1.9990$
		& $1.2497\times10^{-5}$ & $1.9990$ \\
		$0.025$ & $8.8582\times10^{-4}$ & $1.9994$
		& $5.7541\times10^{-5}$ & $1.9998$
		& $3.1248\times10^{-6}$ & $1.9998$ \\
		\bottomrule
	\end{tabular}
\end{table}

\subsection{Asymptotic compatibility}
Now we examine the simultaneous limits $\delta\to0$, $\tau\to0$, and $N\to\infty$ without imposing a coupling condition. We use the cubic nonlinearity and the normalized constant kernel in \eqref{7.1}. All nonlocal collocation solutions and the local reference solution are initialized with \eqref{7.6}, and the local reference solution is computed on a grid with $M_{\mathrm{ref}}=257$ and time step $\tau_{\mathrm{ref}}=7.8125\times10^{-5}$.  To verify its accuracy, we repeat the reference computation with $513$ grid points and time step $\tau_{\mathrm{ref}}/2$. The two reference solutions differ by $2.62\times10^{-7}$ in $H^2$ at $T$, which is negligible compared with the errors reported below.

To vary the three parameters independently, we take $\delta=0.4\cdot2^{-j}$, $\tau=0.02\cdot2^{-\ell}$, $j,\ell=0,\ldots,3$, and $N\in\{3,4,6,8,12,16,32,64\}$. We solve all $4\times4\times8=128$ combinations in the resulting Cartesian product, which covers $0.15\le N\delta\le25.6$. For each parameter triple, define
\begin{equation}
	\mathcal E_{\mathrm{AC}}(\delta,\tau,N)=\max_{0\le n\le N_T}\left\|u_{\mathrm{ref}}(t_n)-\mathcal I_NU^n\right\|_{H^2}.
\end{equation}
To compare $\mathcal E_{\mathrm{AC}}$ with the estimate in Theorem \ref{theorem6.9}, we introduce the corresponding theoretical error scale. Since the error is measured in $H^2$, we set $s=2$ and $r=8$. We then define
\begin{equation}
	\eta_{\mathrm{th}}(\delta,\tau,N)=\delta^2+\tau^2+N^{s-r}=\delta^2+\tau^2+N^{-6}.
\end{equation}

Then to examine convergence under simultaneous refinement, we select three representative refinement sequences from the $128$ computed cases. In each sequence, both $\delta$ and $\tau$ are divided by two at each step. The Fourier cutoffs are
\begin{equation}
	N^{(1)}=(4,6,8,12),
	\quad
	N^{(2)}=(4,8,16,32),
	\quad
	N^{(3)}=(4,12,32,64).
\end{equation}
For these three sequences, $N\delta$ decreases from $1.6$ to $0.6$, remains equal to $1.6$, and increases from $1.6$ to $3.2$, respectively.

The left panel of Figure \ref{fig:uncoupled-ac} shows second-order convergence along all three paths as the parameters are refined. The right panel shows $\mathcal E_{\mathrm{AC}}/\eta_{\mathrm{th}}$ for these paths. All ratios lie between $1.175$ and $1.374$, so the error remains comparable to $\delta^2+\tau^2+N^{-6}$ for each refinement pattern. The three paths therefore verify Theorem \ref{theorem6.9}, which gives an error bound with a constant independent of $\delta$, $\tau$, and $N$ and imposes no coupling condition on these parameters.

\begin{figure}[htbp]
	\centering
	\includegraphics[width=0.96\textwidth]
	{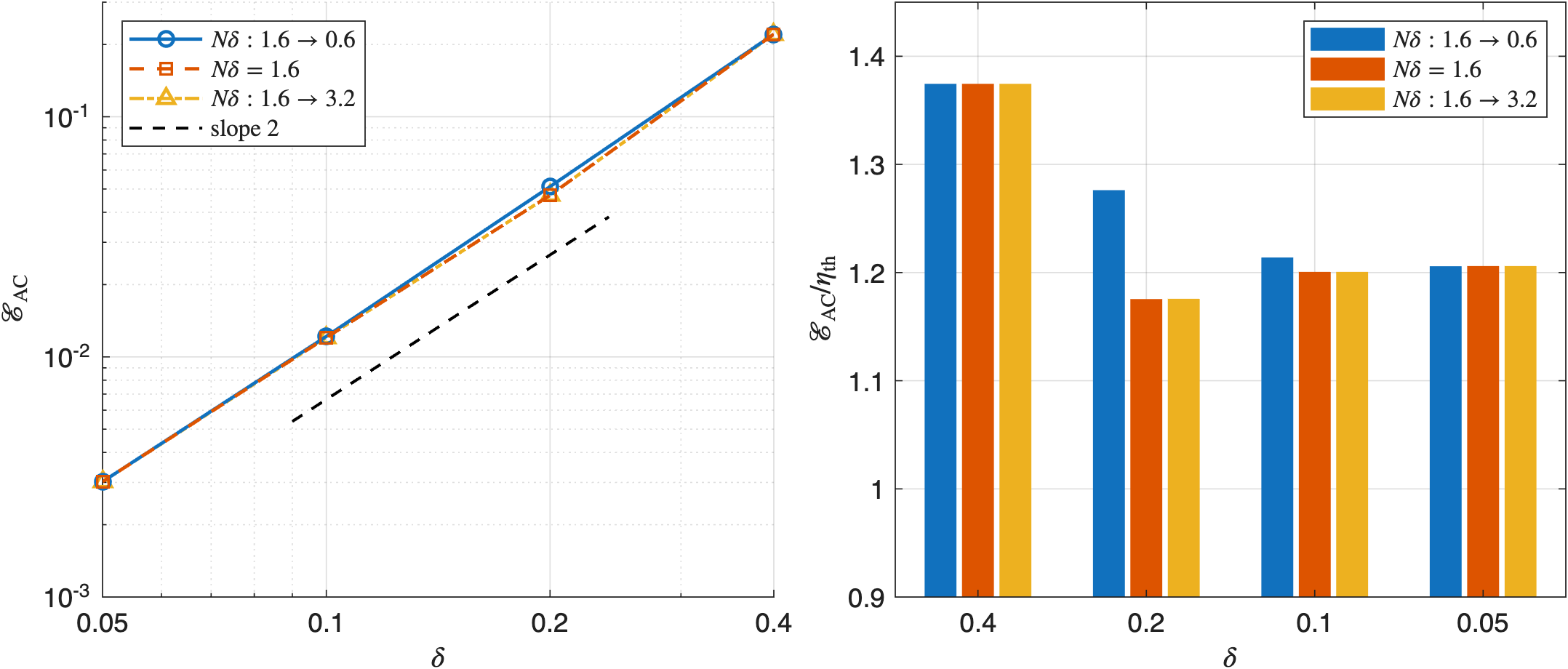}
	\caption{Asymptotic-compatibility test along three independent
		refinement paths. Left: the $H^2$ error $\mathcal E_{\mathrm{AC}}$
		and a reference line of slope two. Right: the normalized error
		$\mathcal E_{\mathrm{AC}}/\eta_{\mathrm{th}}$.}
	\label{fig:uncoupled-ac}
\end{figure}
\subsection{Nonlocal dispersion and wave-packet dynamics}
In this subsection, we examine how the horizon and the kernel modify the dispersion relation, and then study the resulting change in wave-packet propagation.

We investigate how the choice of kernel affects the dispersion relation and group velocity. We consider
\begin{equation}
	\rho_{\mathrm c}(z)=3,
	\quad
	\rho_{\mathrm l}(z)=12(1-|z|),
	\quad
	\rho_{\mathrm s}(z)=\frac{105}{8}(1-z^2)^2,
	\quad |z|\le1.
\end{equation}

All three kernels are even, nonnegative, and satisfy the second-moment normalization \eqref{K2}. And one can easily verify that $m_0(\rho_{\mathrm c})=6$, $m_0(\rho_{\mathrm l})=12$, and $m_0(\rho_{\mathrm s})=14$. To examine the low- and high-frequency conclusions of Proposition \ref{proposition4.9}, we set $a=\delta|\xi|$ and compute
\begin{equation}
	\delta^2\lambda_\delta(\xi)=\int_{-1}^{1}\rho(z)\bigl(1-\cos(az)\bigr)\,\mathrm dz,
	\quad
	\delta|v_{g,\delta}(\xi)|=\left|\int_{-1}^{1}\rho(z)z\sin(az)\,\mathrm dz\right|.
\end{equation}

The left panel of Figure \ref{fig:dispersion-properties} shows the scaled Fourier multipliers. For small $a$, all three curves approach the local reference $a^2$, in agreement with the first estimate in \eqref{4.25}; for large $a$, they approach the corresponding values of $m_0(\rho)$, confirming the first limit in \eqref{4.26}. The right panel shows the scaled group velocities. Their agreement with $2a$ for small $a$ is consistent with the second estimate in \eqref{4.25}, while their decay to zero for large $a$ confirms the second limit in \eqref{4.26}. Thus, $m_0(\rho)$ determines the high-frequency limit of the scaled multiplier, while the shape of $\rho$ affects the transition from the local to the high-frequency regime. The horizon determines the corresponding range of physical wave numbers through $a=\delta|\xi|$.

\begin{figure}[htbp]
	\centering
	\includegraphics[width=0.99\textwidth]
	{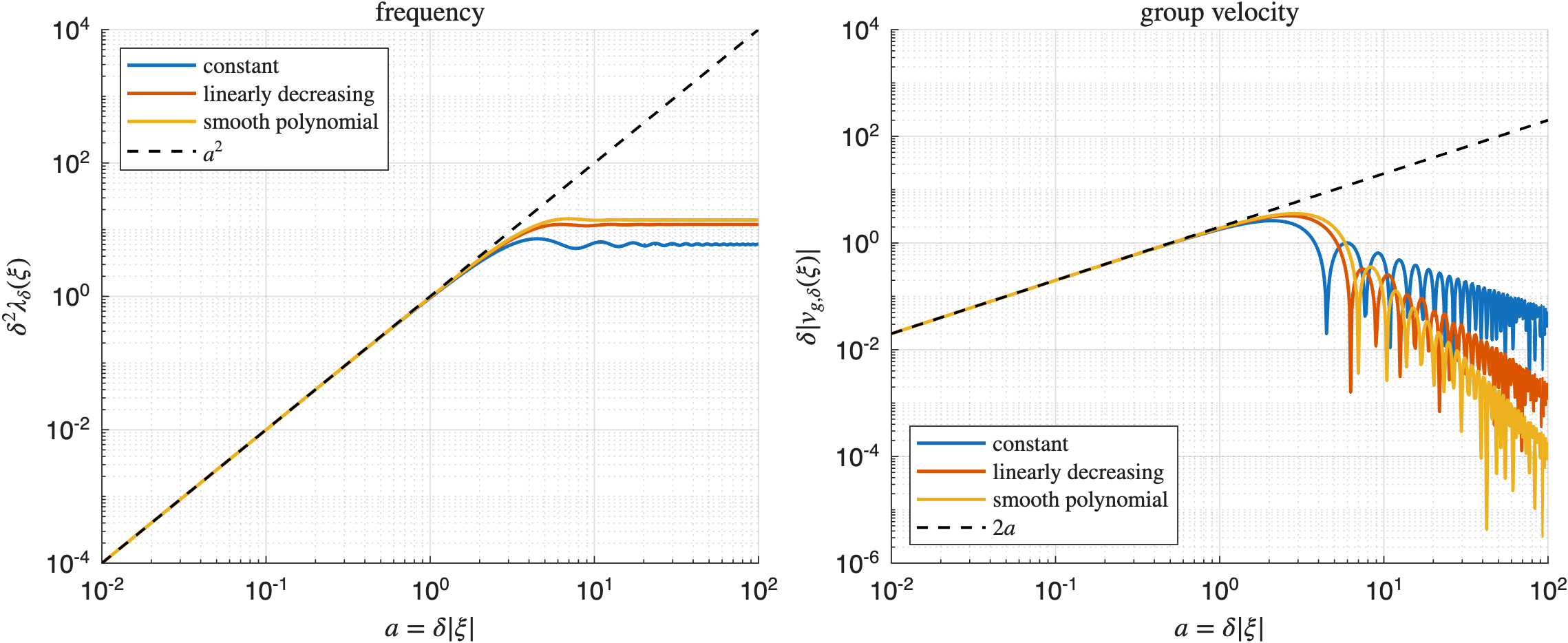}
	\caption{Dispersion for the three kernels considered above.
		Left: scaled Fourier multipliers and the local reference $a^2$. Right: scaled
		group velocities and the local reference $2a$.}
	\label{fig:dispersion-properties}
\end{figure}

We next solve the focusing cubic equation, $f(s)=-s$, on $\mathbb T^2$ with the normalized constant kernel \eqref{7.1}. For $b=(b_1,b_2)$, set
\begin{equation}
	g_b(x)=\exp\!\left\{\kappa\left[\cos(x_1-b_1)+\cos(x_2-b_2)-2\right]\right\},
	\quad
	u_0^{\mathrm{dyn}}(x)=A g_{-x_c}(x)e^{\mathrm i k_0\cdot x}+A g_{x_c}(x)e^{-\mathrm i k_0\cdot x}.
\end{equation}
We take $A=1$, $\kappa=8$, $x_c=(1.8,1.2)$, $k_0=(3,2)$, $M=129$ in each direction, and $\tau=10^{-3}$. Figure \ref{fig:wave-packet-dynamics} compares the density $|U(x_1,x_2,t)|^2$ at $t=0.45$ for the local solution and the nonlocal solutions with $\delta=0.2,0.3,\ldots,1.0$. The same initial value and numerical parameters are used in all computations.

For small horizons, the interaction pattern remains close to that of the local solution. As the horizon increases, the packets travel a shorter distance by $t=0.45$ and their collision is delayed, consistently with the reduction in the nonlocal group velocity. 
\begin{figure}[htbp]
	\centering
	\includegraphics[width=0.99\textwidth]
	{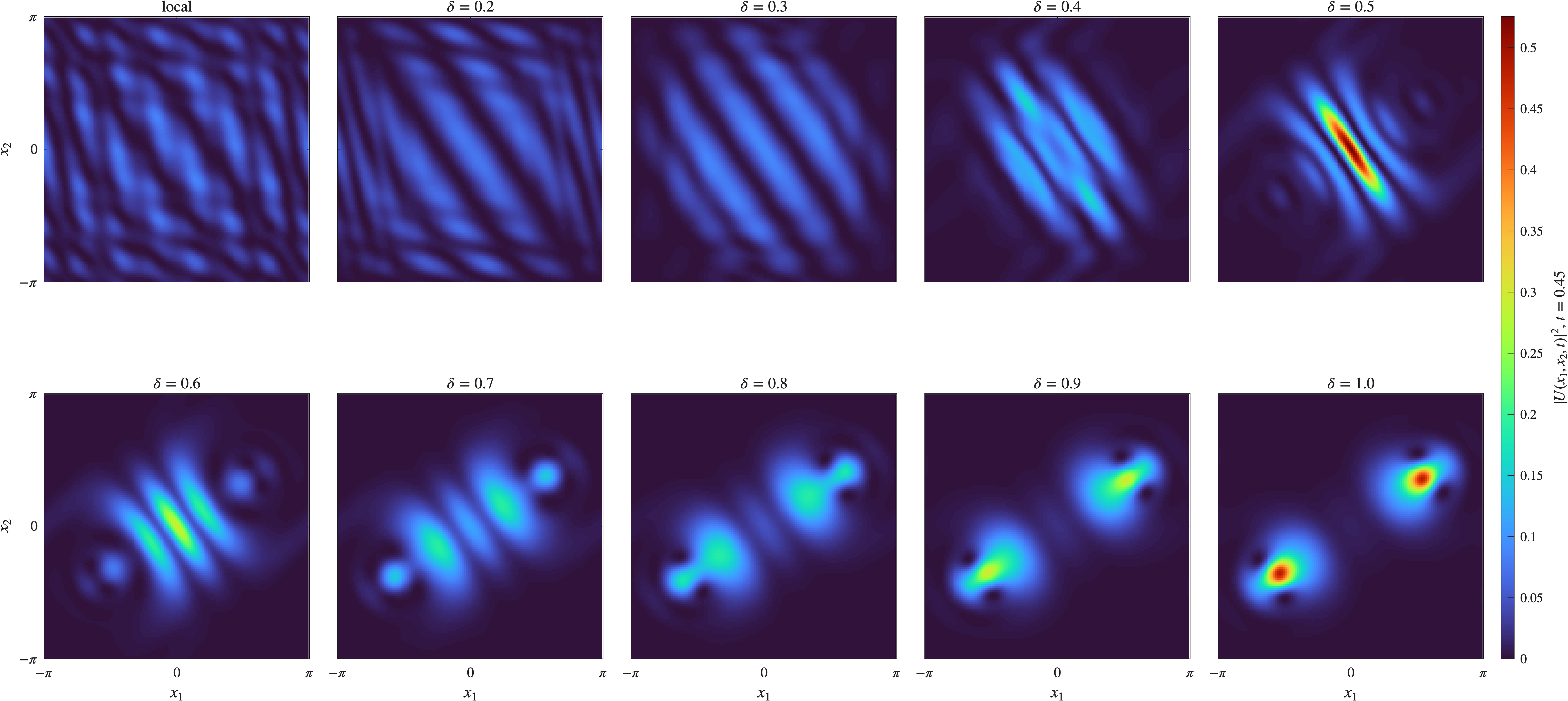}
	\caption{Density $|U(x_1,x_2,t)|^2$ at $t=0.45$ for the local and
		nonlocal focusing cubic NLS equations.}
	\label{fig:wave-packet-dynamics}
\end{figure}

Together, the dispersion tests and wave packet simulations show that the nonlocal model recovers the behavior of the local model for long waves, while the kernel and horizon produce distinct dynamics at shorter wavelengths.

\section{Conclusion}\label{sec8}
In this paper, we studied a finite-horizon nonlocal NLS model on periodic domains. We established global well-posedness, conservation of mass and nonlocal energy, an $O(\delta^2)$ local limit, and the long- and short-wave dispersion properties. We constructed an asymptotically compatible Crank--Nicolson Fourier collocation method that preserves the grid mass and the discrete counterpart of the nonlocal energy. Its horizon-uniform $H^s$-error is $O(\tau^2+N^{s-r})$ relative to the nonlocal solution and $O(\delta^2+\tau^2+N^{s-r})$ relative to the local solution, without any coupling condition among $\delta$, $\tau$, and $N$. Numerical experiments confirm these results, and the two-dimensional wave-packet test illustrates finite-horizon dispersive effects.

\section*{Declaration of competing interest}
The authors declare that they have no known competing financial interests or personal relationships that could have appeared to influence the work reported in this paper.

\section*{Acknowledgments}
The authors gratefully acknowledge the financial support received from the National Natural Science Foundation of China, PR China (No. 12501565, No. 12571440).

\section*{Data availability}
Data will be made available on request.

\bibliographystyle{plain} 
\bibliography{sample}

@article{2012Analysis,
	title={Analysis and Approximation of Nonlocal Diffusion Problems with Volume Constraints},
	author={ Du, Qiang  and  Gunzburger, Max  and  Lehoucq, R. B.  and  Zhou, Kun },
	journal={SIAM Review},
	volume={54},
	number={4},
	pages={667-696},
	year={2012},
}

@article{2013A,
	title={A nonlocal vector calculus, nonlocal volume-constrained problems, and nonlocal balance laws},
	author={ Du, Qiang  and  Gunzburger, Max  and  Lehoucq, R. B.  and  Zhou, Kun },
	journal={Mathematical Models \& Methods in Applied Sciences},
	volume={23},
	number={03},
	pages={493-540},
	year={2013},
}

@article{2017Fast,
	title={Fast and accurate implementation of Fourier spectral approximations of nonlocal diffusion operators and its applications},
	author={ Du, Qiang  and  Yang, Jiang },
	journal={Journal of Computational Physics},
	volume={332},
	pages={118-134},
	year={2017},
}

@book{taylor2013partial,
	title={Partial Differential Equations {III}: Nonlinear Equations},
	author={Taylor, Michael},
	volume={117},
	year={2013},
	publisher={Springer Science \& Business Media}
}

@book{deimling2006ordinary,
	title={Ordinary differential equations in Banach spaces},
	author = {Deimling, Klaus},
	year={2006},
	publisher={Springer}
}

@article{lipton2025energy,
	title={Energy balance and damage for dynamic fast crack growth from a nonlocal formulation},
	author={Lipton, Robert P and Bhattacharya, Debdeep},
	journal={Journal of Elasticity},
	volume={157},
	number={1},
	pages={5},
	year={2025},
	publisher={Springer}
}

@article{shen2021peridynamic,
	title={Peridynamic modeling with energy-based surface correction for fracture simulation of random porous materials},
	author={Shen, Shangkun and Yang, Zihao and Han, Fei and Cui, Junzhi and Zhang, Jieqiong},
	journal={Theoretical and Applied Fracture Mechanics},
	volume={114},
	pages={102987},
	year={2021},
	publisher={Elsevier}
}

@article{javili2019peridynamics,
	title={Peridynamics review},
	author={Javili, Ali and Morasata, Rico and Oterkus, Erkan and Oterkus, Selda},
	journal={Mathematics and Mechanics of Solids},
	volume={24},
	number={11},
	pages={3714--3739},
	year={2019},
	publisher={SAGE Publications Sage UK: London, England}
}

@article{huang2024asymptotic,
	title={Asymptotic compatibility of a class of numerical schemes for a nonlocal traffic flow model},
	author={Huang, Kuang and Du, Qiang},
	journal={SIAM Journal on Numerical Analysis},
	volume={62},
	number={3},
	pages={1119--1144},
	year={2024},
	publisher={SIAM}
}

@article{huang2022stability,
	title={Stability of a nonlocal traffic flow model for connected vehicles},
	author={Huang, Kuang and Du, Qiang},
	journal={SIAM Journal on Applied Mathematics},
	volume={82},
	number={1},
	pages={221--243},
	year={2022},
	publisher={SIAM}
}

@article{pal2025nonlocal,
	title={Nonlocal models in biology and life sciences: Sources, developments, and applications},
	author={Pal, Swadesh and Melnik, Roderick},
	journal={Physics of life reviews},
	volume={53},
	pages={24--75},
	year={2025},
	publisher={Elsevier}
}

@article{tao2018nonlocal,
	title={Nonlocal neural networks, nonlocal diffusion and nonlocal modeling},
	author={Tao, Yunzhe and Sun, Qi and Du, Qiang and Liu, Wei},
	journal={Advances in Neural Information Processing Systems},
	volume={31},
	year={2018}
}

@article{you2022nonlocal,
	title={Nonlocal kernel network (NKN): A stable and resolution-independent deep neural network},
	author={You, Huaiqian and Yu, Yue and D'Elia, Marta and Gao, Tian and Silling, Stewart},
	journal={Journal of Computational Physics},
	volume={469},
	pages={111536},
	year={2022},
	publisher={Elsevier}
}

@article{tian2014asymptotically,
	title={Asymptotically compatible schemes and applications to robust discretization of nonlocal models},
	author={Tian, Xiaochuan and Du, Qiang},
	journal={SIAM Journal on Numerical Analysis},
	volume={52},
	number={4},
	pages={1641--1665},
	year={2014},
	publisher={SIAM}
}

@article{tian2020asymptotically,
	title={Asymptotically compatible schemes for robust discretization of parametrized problems with applications to nonlocal models},
	author={Tian, Xiaochuan and Du, Qiang},
	journal={SIAM Review},
	volume={62},
	number={1},
	pages={199--227},
	year={2020},
	publisher={SIAM}
}

@article{sanz1984methods,
	title={Methods for the numerical solution of the nonlinear Schr{\"o}dinger equation},
	author={Sanz-Serna, JM},
	journal={mathematics of computation},
	volume={43},
	number={167},
	pages={21--27},
	year={1984}
}

@article{henning2017crank,
	title={{Crank--Nicolson Galerkin} approximations to nonlinear Schr{\"o}dinger equations with rough potentials},
	author={Henning, Patrick and Peterseim, Daniel},
	journal={Mathematical Models and Methods in Applied Sciences},
	volume={27},
	number={11},
	pages={2147--2184},
	year={2017},
	publisher={World Scientific}
}

@article{wang2018structure,
	title={Structure-preserving numerical methods for the fractional Schr{\"o}dinger equation},
	author={Wang, Pengde and Huang, Chengming},
	journal={Applied Numerical Mathematics},
	volume={129},
	pages={137--158},
	year={2018},
	publisher={Elsevier}
}

@article{ding2024construction,
	title={Construction and analysis of structure-preserving numerical algorithm for two-dimensional damped nonlinear space fractional Schr{\"o}dinger equation},
	author={Ding, Hengfei and Qu, Haidong and Yi, Qian},
	journal={Journal of Scientific Computing},
	volume={99},
	number={3},
	pages={60},
	year={2024},
	publisher={Springer}
}

@article{zhang2025high,
	title={High-order mass-and energy-conserving methods for the coupled nonlinear Schr{\"o}dinger equation},
	author={Zhang, Pingrui and Xia, Li and Zhang, Hui and Jiang, Xiaoyun},
	journal={Communications in Nonlinear Science and Numerical Simulation},
	volume={149},
	pages={108944},
	year={2025},
	publisher={Elsevier}
}

@article{alali2020fourier,
	title={Fourier spectral methods for nonlocal models},
	author={Alali, Bacim and Albin, Nathan},
	journal={Journal of Peridynamics and Nonlocal Modeling},
	volume={2},
	number={3},
	pages={317--335},
	year={2020},
	publisher={Springer}
}

@article{yan2020numerical,
	title={Numerical computations of nonlocal Schr{\"o}dinger equations on the real line},
	author={Yan, Yonggui and Zhang, Jiwei and Zheng, Chunxiong},
	journal={Communications on Applied Mathematics and Computation},
	volume={2},
	number={2},
	pages={241--260},
	year={2020},
	publisher={Springer}
}

@article{wang2022stability,
	title={Stability and error analysis for a second-order approximation of 1D nonlocal Schr{\"o}dinger equation under DtN-type boundary conditions},
	author={Wang, Jihong and Zhang, Jiwei and Zheng, Chunxiong},
	journal={Mathematics of Computation},
	volume={91},
	number={334},
	pages={761--783},
	year={2022}
}

@article{laskin2000fractional,
	title={Fractional quantum mechanics and L{\'e}vy path integrals},
	author={Laskin, Nikolai},
	journal={Physics Letters A},
	volume={268},
	number={4-6},
	pages={298--305},
	year={2000},
	publisher={Elsevier}
}

@article{d2013fractional,
	title={The fractional Laplacian operator on bounded domains as a special case of the nonlocal diffusion operator},
	author={D’Elia, Marta and Gunzburger, Max},
	journal={Computers \& Mathematics with Applications},
	volume={66},
	number={7},
	pages={1245--1260},
	year={2013},
	publisher={Elsevier}
}

@article{kirkpatrick2013continuum,
	title={On the continuum limit for discrete NLS with long-range lattice interactions},
	author={Kirkpatrick, Kay and Lenzmann, Enno and Staffilani, Gigliola},
	journal={Communications in mathematical physics},
	volume={317},
	number={3},
	pages={563--591},
	year={2013},
	publisher={Springer}
}

\end{document}